\newcommand{\bbC}{{\mathbb{C}}}
\newcommand{\bbN}{{\mathbb{N}}}
\newcommand{\bbR}{{\mathbb{R}}}
\newcommand{\fre}{{\frak{e}}}
\newcommand{\calR}{{\mathcal R}}
\newcommand{\calT}{{\mathcal T}}
\newcommand{\dott}{\,\cdot\,}
\newcommand{\lb}{\label}
\newcommand{\f}{\frac}
\newcommand{\ul}{\underline}
\newcommand{\ol}{\overline}
\newcommand{\ti}{\tilde  }
\newcommand{\wti}{\widetilde  }
\newcommand{\Lt}{\text{\rm{L}}}
\newcommand{\Md}{\text{\rm{M}}}
\newcommand{\Rt}{\text{\rm{R}}}
\newcommand{\tr}{\text{\rm{Tr}}}
\newcommand{\ess}{\text{\rm{ess}}}
\newcommand{\ac}{\text{\rm{ac}}}
\newcommand{\s}{\text{\rm{s}}}
\newcommand{\supp}{\text{\rm{supp}}}
\newcommand{\intt}{\text{\rm{int}}}
\newcommand{\bi}{\bibitem}
\newcommand{\beq}{\begin{equation}}
\newcommand{\eeq}{\end{equation}}
\newcommand{\ba}{\begin{align}}
\newcommand{\ea}{\end{align}}
\newcommand{\veps}{\varepsilon}
\newcounter{smalllist}
\newenvironment{SL}{\begin{list}{{\rm\roman{smalllist})}}{%
\setlength{\topsep}{0mm}\setlength{\parsep}{0mm}\setlength{\itemsep}{0mm}%
\setlength{\labelwidth}{2em}\setlength{\leftmargin}{2em}\usecounter{smalllist}%
}}{\end{list}}
\DeclareMathOperator{\Real}{Re}
\DeclareMathOperator{\Ima}{Im}
\DeclareMathOperator*{\slim}{s-lim}
\DeclareMathOperator*{\wlim}{w-lim}
\numberwithin{equation}{section}
\newtheorem{theorem}{Theorem}[section]
\newtheorem{proposition}[theorem]{Proposition}
\newtheorem{lemma}[theorem]{Lemma}
\newtheorem{corollary}[theorem]{Corollary}
\theoremstyle{definition}
\newtheorem{example}[theorem]{Example}
\newtheorem{conjecture}[theorem]{Conjecture}
\theoremstyle{remark}
\newtheorem*{remark}{Remark}
\newtheorem*{remarks}{Remarks}
\newtheorem*{definition}{Definition}
\newcommand{\abs}[1]{\lvert#1\rvert}
\newcommand{\jap}[1]{\langle #1 \rangle}
\newcommand{\norm}[1]{\lVert#1\rVert}
\begin{document}
\title{The Nevai Condition}
\author[J.~Breuer, Y.~Last, and B.~Simon]{Jonathan Breuer$^1$, Yoram Last$^{2,4}$,
and Barry Simon$^{3,4}$}

\thanks{$^1$ Mathematics 253-37, California Institute of Technology, Pasadena, CA 91125, USA.
E-mail: jbreuer@caltech.edu}

\thanks{$^2$ Institute of Mathematics, The Hebrew University, 91904 Jerusalem, Israel.
E-mail: ylast@math.huji.ac.il. Supported in part by The Israel Science
Foundation (grant no.\ 1169/06)}

\thanks{$^3$ Mathematics 253-37, California Institute of Technology, Pasadena, CA 91125, USA.
E-mail: bsimon@caltech.edu. Supported in part by NSF grant DMS-0652919}

\thanks{$^4$ Research supported in part
by Grant No.\ 2006483 from the United States-Israel Binational Science
Foundation (BSF), Jerusalem, Israel}

\date{September 4, 2008}
\keywords{Orthogonal polynomials, regular measures, CD kernel}
\subjclass[2000]{42C05, 39A10, 30C10}

\begin{abstract} We study Nevai's condition that for orthogonal polynomials on the real line,
$K_n(x,x_0)^2 K_n(x_0,x_0)^{-1}\, d\rho (x)\to\delta_{x_0}$ where
$K_n$ is the CD kernel. We prove that it holds for the Nevai class of
a finite gap set uniformly on the spectrum and we provide an example
of a regular measure on $[-2,2]$ where it fails on an interval.
\end{abstract}

\maketitle

\section{Introduction} \lb{s1}

This paper studies material on the borderline of the theory of
orthogonal polynomials on the real line (OPRL) and spectral theory.
Let $d\rho$ be a measure on $\bbR$ of compact but not finite support
and let $P_n(x,d\rho), p_n (x,d\rho)$ be the standard
\cite{SzBk,FrBk,Rice} monic and normalized orthogonal polynomials
for $d\rho$. Let $\{a_n,b_n\}_{n=1}^\infty$ be the Jacobi parameters
defined by
\begin{equation} \lb{1.1}
xp_n(x) = a_{n+1} p_{n+1}(x) + b_{n+1} p_n(x) + a_n p_{n-1}(x)
\end{equation}

The CD (for Christoffel--Darboux) kernel is defined by
\begin{equation} \lb{1.2}
K_n(x,y) =\sum_{j=0}^n p_j(x) p_j(y)
\end{equation}
for $x,y$ real. The CD formula (see, e.g., \cite{CD}) asserts that
\begin{equation} \lb{1.3}
K_n(x,y) =\f{a_{n+1} [p_{n+1}(x) p_n(y) -p_n(x) p_{n+1}(y)]}{x-y}
\end{equation}

The Christoffel variational principle (see \cite{CD}) says that if
\begin{equation} \lb{1.4}
\lambda_n(x_0) =\min \biggl\{\int \abs{Q_n(x)}^2\, d\rho(x)\biggm| \deg Q_n\leq n,\, Q_n(x_0)=1\biggr\}
\end{equation}
then
\begin{equation} \lb{1.5}
\lambda_n(x_0) = \f{1}{K_n(x_0,x_0)}
\end{equation}
and the minimizer is given by
\begin{equation} \lb{1.6}
\wti{Q}_n(x,x_0) = \f{K_n(x,x_0)}{K_n(x_0,x_0)}
\end{equation}
It is quite natural to look at the probability measures
\begin{equation} \lb{1.7}
d\eta_n^{(x_0)}(x) = \f{\abs{\wti{Q}_n(x,x_0)}^2\, d\rho(x)}{\int
\abs{\wti{Q}_n(y,x_0)}^2 \, d\rho(y)}
\end{equation}
so that, by \eqref{1.5} and \eqref{1.6},
\begin{equation} \lb{1.8}
d\eta_n^{(x_0)} = \f{\abs{K_n(x,x_0)}^2\, d\rho(x)}{K_n(x_0,x_0)}
\end{equation}

\begin{definition} We say $d\rho$ obeys a {\it Nevai condition\/} at $x_0$ if and only if
\begin{equation} \lb{1.9}
\wlim_{n\to\infty}\, d\eta_n^{(x_0)} =\delta_{x_0}
\end{equation}
the point mass at $x_0$.
\end{definition}

The name comes from the fact that this condition was studied in the
seminal work of Nevai \cite{Nev79}, who considered the following:

\begin{definition} The {\it Nevai class\/} for $[-2,2]$ is the set of all measures $d\rho$ whose Jacobi
parameters obey
\begin{equation} \lb{1.10}
a_n\to 1 \qquad b_n\to 0
\end{equation}
\end{definition}

$[-2,2]$ is relevant since, by Weyl's theorem on essential spectrum, $[-2,2]$ is the derived set of
$\supp(d\rho)$, that is, the essential spectrum for the Jacobi matrix of $d\rho$.

Nevai proved the following:

\begin{theorem}[Nevai \cite{Nev79}]\lb{T1.1} If $d\rho$ is in the Nevai class for $[-2,2]$, then the
Nevai condition holds for all $x_0$ in $[-2,2]$. Indeed, the limit is uniform for $x_0$ in any  compact
set $K\subset (-2,2)$.
\end{theorem}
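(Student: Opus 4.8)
The plan is to prove the Nevai condition by a direct estimate on the measures $d\eta_n^{(x_0)}$, using the explicit CD formula \eqref{1.3} together with the Christoffel variational principle \eqref{1.5}. The key observation is that weak convergence of $d\eta_n^{(x_0)}$ to $\delta_{x_0}$ is equivalent to showing that for every $\veps>0$, the mass $d\eta_n^{(x_0)}(\{x:\abs{x-x_0}\geq\veps\})\to 0$, since these are probability measures. Thus the entire task reduces to bounding $\int_{\abs{x-x_0}\geq\veps}\abs{K_n(x,x_0)}^2\, d\rho(x)$ from above and showing it is $o(K_n(x_0,x_0))$.

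For the upper bound on the numerator, I would use \eqref{1.3} to write, on the region $\abs{x-x_0}\geq\veps$,
\[
\abs{K_n(x,x_0)}^2 \leq \f{a_{n+1}^2}{\veps^2}\bigl(\abs{p_{n+1}(x)}\abs{p_n(x_0)} + \abs{p_n(x)}\abs{p_{n+1}(x_0)}\bigr)^2,
\]
so that
\[
\int_{\abs{x-x_0}\geq\veps}\abs{K_n(x,x_0)}^2\, d\rho(x) \leq \f{2a_{n+1}^2}{\veps^2}\bigl(p_n(x_0)^2 + p_{n+1}(x_0)^2\bigr),
\]
using $\int p_j^2\, d\rho = 1$ and the Cauchy--Schwarz inequality. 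In the Nevai class $a_{n+1}\to 1$, so the numerator is $O\bigl(\veps^{-2}(p_n(x_0)^2+p_{n+1}(x_0)^2)\bigr)$. For the denominator, the trivial lower bound $K_n(x_0,x_0) = \sum_{j=0}^n p_j(x_0)^2 \geq \max(p_n(x_0)^2, p_{n-1}(x_0)^2)$ is not quite enough on its own; instead I would use the stronger fact that $K_n(x_0,x_0)\to\infty$ whenever $x_0$ is in the essential support of the absolutely continuous part, or more robustly, that in the Nevai class one has precise asymptotics. The cleanest route: show $p_n(x_0)^2/K_n(x_0,x_0)\to 0$ uniformly for $x_0$ in compact subsets of $(-2,2)$.

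To get this last ingredient, I would invoke the known behavior of orthonormal polynomials in the Nevai class: on $(-2,2)$, writing $x_0 = 2\cos\theta$, one has $p_n(x_0) = O(1)$ (bounded uniformly on compacts of $(-2,2)$), while $K_n(x_0,x_0)$ grows linearly in $n$. Both facts follow from the transfer-matrix / Szegő-asymptotic analysis available for the Nevai class (indeed Máté--Nevai--Totik type results, or the earlier Nevai estimates), and the boundedness of $p_n$ is precisely where the hypothesis $a_n\to 1$, $b_n\to 0$ enters decisively: it makes the one-step transfer matrices converge to a fixed elliptic (rotation-conjugate) matrix on compacts of $(-2,2)$, forcing boundedness of solutions and linear growth of $\sum p_j^2$. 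Combining, the ratio numerator/denominator is $O(\veps^{-2}/n)\to 0$, and the estimates are uniform in $x_0\in K$ because all the inputs ($a_{n+1}$ bounds, $p_n$ bounds, the linear lower bound on $K_n$) are uniform there.

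The main obstacle is establishing the two uniform asymptotic facts — $\sup_{x_0\in K}\abs{p_n(x_0)} = O(1)$ and $\inf_{x_0\in K} K_n(x_0,x_0)\geq c\, n$ — cleanly from the Nevai-class hypothesis alone, without circularity. I expect this is handled by a compactness argument on transfer matrices: approximate the tail $\{a_n,b_n\}_{n\geq N}$ by the constant sequence $\{1,0\}$, for which the free Jacobi matrix gives Chebyshev-polynomial behavior with exactly these properties, and then control the error via Gronwall-type telescoping using $\sum|a_n-1| + |b_n|$ being small for large $n$ — or, more carefully, just $a_n-1\to 0$, $b_n\to 0$ suffices since the perturbation of each transfer step is small and the products stay bounded on the elliptic region. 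Once those two facts are in hand, the rest of the argument is the elementary Cauchy--Schwarz estimate above.
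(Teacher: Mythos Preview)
Your reduction to showing
\[
\f{p_n(x_0)^2 + p_{n+1}(x_0)^2}{K_n(x_0,x_0)} \to 0
\]
is correct and matches the paper (Theorem~\ref{T2.2} and \eqref{2.14}). The gap is in how you propose to prove this ratio goes to zero. You assert that in the Nevai class $p_n(x_0)=O(1)$ uniformly on compacts of $(-2,2)$, and justify it by saying that since the one-step transfer matrices converge to a fixed elliptic matrix, ``the products stay bounded on the elliptic region.'' That implication is false: $A_n\to A$ with $A$ elliptic does \emph{not} force $\norm{A_n\cdots A_1}$ to stay bounded. A concrete Nevai-class counterexample is $a_n\equiv 1$, $b_n=\lambda_n n^{-1/2}$ with $\lambda_n$ bounded i.i.d.\ random variables (the critical decaying random model of \cite{KLS}): here $b_n\to 0$, yet for a.e.\ realization and a.e.\ $x_0\in(-2,2)$ the transfer matrix grows like a positive power of $n$, so $p_n(x_0)$ is unbounded. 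Your fallback, a Gronwall/telescoping argument, would require $\sum(\abs{a_n-1}+\abs{b_n})<\infty$, which is strictly stronger than the Nevai-class hypothesis and proves a weaker theorem.

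The paper's route (the NTZ argument, Section~\ref{s6}) sidesteps boundedness of $p_n$ entirely. The key is Corollary~\ref{C6.2}: for any $2\times 2$ matrix $A$ with $\det A=1$ and $\abs{\tr A}\leq 2$, and any vector $v$,
\[
\abs{(A^{L-1}v)_1}^2 \leq \f{12}{L}\sum_{j=0}^{L-1}\abs{(A^jv)_1}^2.
\]
Since the Jacobi parameters converge, the last $L$ transfer steps are asymptotically governed by the free one-step matrix $A$ (which is elliptic on $[-2,2]$), and a compactness argument on right limits (proof of Theorem~\ref{T6.3}) then gives $\limsup_n p_n(x_n)^2\big/\sum_{j=n-L}^n p_j(x_n)^2 \leq 12/L$ for any $L$. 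This bounds the \emph{ratio} directly, without ever controlling the size of $p_n$ itself, and in fact yields uniformity on all of $[-2,2]$, not just compacts of the interior.
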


The connection of this to spectral theory comes from the relation to the following condition,
sometimes called subexponential growth,
\begin{equation} \lb{1.11}
\lim_{n\to\infty} \f{\abs{p_n(x_0)}^2}{\sum_{j=0}^n \abs{p_j(x_0)}^2} =0
\end{equation}
which we will show (see Proposition~\ref{P2.1}) is equivalent to
\begin{equation} \lb{1.12}
\lim_{n\to\infty}\, \f{(\abs{p_{n-1}(x_0)}^2 + \abs{p_n(x_0)}^2)}{\sum_{j=0}^n \abs{p_j(x_0)}^2}=0
\end{equation}
and to
\begin{equation} \lb{1.13}
\lim_{n\to\infty}\, \f{\abs{p_{n+1}(x_0)}^2}{\sum_{j=0}^n \abs{p_j(x_0)}^2}=0
\end{equation}
We will sometimes need 
\begin{equation} \lb{1.13a}
0 <A_- \equiv \inf_n \, a_n \leq \sup_n \, a_n \equiv A_+ <\infty
\end{equation}
We note $A_+ <\infty$ follows from the assumption that $\supp(d\rho)$ is compact, and if $A_- =0$,
then by a result of Dombrowski \cite{Dom78}, $d\mu$ is purely singular with respect to Lebesgue measure.
Obviously, \eqref{1.13a} holds for the discrete Schr\"odinger case, $a_n\equiv 1$.

The relation of \eqref{1.11} to Nevai's condition is direct:

\begin{theorem}\lb{T1.2} Let \eqref{1.13a} hold. Nevai's condition holds at $x_0$ if and only if
\eqref{1.11} holds.
\end{theorem}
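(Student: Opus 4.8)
The plan is to reduce the weak convergence \eqref{1.9} to the vanishing of a single moment of $d\eta_n^{(x_0)}$, and then to evaluate that moment in closed form using the CD formula \eqref{1.3}. The reduction rests on the fact that, since $\supp(d\rho)$ is compact, \emph{all} of the probability measures $d\eta_n^{(x_0)}$ are supported on one fixed compact set. On such a set, $d\eta_n^{(x_0)}\to\delta_{x_0}$ weakly is equivalent to
\[
\int (x-x_0)^2\, d\eta_n^{(x_0)}(x)\longrightarrow 0 .
\]
Indeed, if this integral tends to $0$, then $\eta_n^{(x_0)}(\{\abs{x-x_0}\ge\delta\})\to 0$ for every $\delta>0$ by Chebyshev's inequality, and splitting $\int (f(x)-f(x_0))\,d\eta_n^{(x_0)}(x)$ according to whether $\abs{x-x_0}<\delta$ or $\ge\delta$ shows $\int f\,d\eta_n^{(x_0)}\to f(x_0)$ for every bounded continuous $f$. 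Conversely, a function agreeing with $(x-x_0)^2$ on a neighborhood of $\supp(d\rho)$ and having compact support is bounded and continuous, so weak convergence to $\delta_{x_0}$ forces the integral to converge to $0$.

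First I would compute the second moment. Multiplying \eqref{1.3} (with $y=x_0$) by $x-x_0$ and squaring gives
\[
(x-x_0)^2 K_n(x,x_0)^2 = a_{n+1}^2\bigl(p_{n+1}(x)p_n(x_0)-p_n(x)p_{n+1}(x_0)\bigr)^2 .
\]
Integrating against $d\rho$ and using $\int p_n^2\,d\rho=\int p_{n+1}^2\,d\rho=1$ together with $\int p_np_{n+1}\,d\rho=0$, the cross term vanishes and
\[
\int (x-x_0)^2 K_n(x,x_0)^2\,d\rho(x)=a_{n+1}^2\bigl(p_n(x_0)^2+p_{n+1}(x_0)^2\bigr).
\]
Dividing by $K_n(x_0,x_0)=\sum_{j=0}^n p_j(x_0)^2$ and recalling \eqref{1.8},
\[
\int (x-x_0)^2\, d\eta_n^{(x_0)}(x)=\f{a_{n+1}^2\bigl(p_n(x_0)^2+p_{n+1}(x_0)^2\bigr)}{\sum_{j=0}^n p_j(x_0)^2}.
\]

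To finish, under \eqref{1.13a} one has $A_-^2\le a_{n+1}^2\le A_+^2$, so by the first paragraph Nevai's condition at $x_0$ holds if and only if
\[
\f{p_n(x_0)^2+p_{n+1}(x_0)^2}{\sum_{j=0}^n p_j(x_0)^2}\longrightarrow 0 ,
\]
that is, if and only if \eqref{1.11} and \eqref{1.13} both hold; and by Proposition~\ref{P2.1} these two conditions are equivalent, so each of them is separately equivalent to Nevai's condition at $x_0$. The computation itself is short and essentially forced, so I do not anticipate a serious obstacle. The points that need care are making the passage between weak convergence to a point mass and second‑moment convergence precise (this is exactly where compactness of $\supp(d\rho)$ enters) and invoking the equivalence \eqref{1.11}$\Leftrightarrow$\eqref{1.13} from Proposition~\ref{P2.1} rather than reproving it — that equivalence is what lets the $p_{n+1}(x_0)^2$ term be absorbed, and it is only for this (``Nevai $\Rightarrow$ \eqref{1.11}'') direction that the lower bound $A_->0$ in \eqref{1.13a} is used, the upper bound $A_+<\infty$ being automatic from compactness of the support.
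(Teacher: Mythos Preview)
Your proof is correct and follows essentially the same route as the paper: reduce Nevai's condition to $\int (x-x_0)^2\,d\eta_n^{(x_0)}\to 0$ using compact support, compute that integral via the CD formula as $a_{n+1}^2(p_n(x_0)^2+p_{n+1}(x_0)^2)/K_n(x_0,x_0)$, and then invoke \eqref{1.13a} and Proposition~\ref{P2.1} to identify this with \eqref{1.11}. Your write-up merely spells out the Chebyshev argument for the weak-convergence equivalence more explicitly than the paper does, and adds the (correct) observation that only $A_->0$ is needed for the Nevai $\Rightarrow$ \eqref{1.11} direction.
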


That \eqref{1.11} $\Rightarrow$ \eqref{1.9} is due to Nevai. The converse is new here and appears as
Theorem~\ref{T2.2}.

Equation \eqref{1.11} is, of course, the kind of asymptotic eigenfunction
result of interest to spectral theorists and is susceptible to the
methods of spectral theory. In particular, Theorem \ref{T1.2} shows
that for compactly supported measures, the Nevai condition is
intimately connected with the existence of certain natural sequences
of approximate eigenvectors for the associated Jacobi matrix (for
the relevance of approximate eigenvectors to spectral analysis
see, e.g., \cite{CKL}). Note that \eqref{1.1} and the orthogonality
relation say that, for $\rho$-a.e.\ $x$, the sequence
$(p_0(x),p_1(x),\ldots)$ is a generalized eigenfunction, at $x$, for
the Jacobi matrix, $J$, defined by the parameters
$\{a_n,b_n\}_{n=1}^\infty$ (namely, it is a polynomially bounded
solution of the corresponding eigenvalue equation). Truncations of generalized
eigenfunctions are natural candidates for sequences of approximate
eigenvectors, and Theorem \ref{T1.2} (see \eqref{2.14} in
particular) says that the Nevai condition at $x_0$ is equivalent to
the requirement that truncations of the generalized eigenfunction at $x_0$ 
yield a sequence of approximate
eigenvectors for $J$. In fact, from the point of view of spectral
theory, for compactly supported measures, the Nevai condition is a
simple restatement of this in the ``energy representation'' of the
Jacobi matrix.

This connection is one reason Nevai's condition is interesting---it
has also been used to relate ratios of $\lambda(x_0)$ for the
measures $d\rho(x)$ and $e^{g(x)}\, d\rho(x)$ (Nevai's motivation in
\cite{Nev79}). In this context, it was used by
M\'at\'e--Nevai--Totik \cite{MNT91} to relate CD kernel asymptotics
for OPUC and OPRL and to study OP asymptotics when a Szeg\H{o}
condition fails (see \cite{MNT84,MNT87a,MNT87b}; see also
\cite[Sect.~3.10]{Rice}).

Given the form of the Christoffel variational principle, Nevai's condition seems like something that
must always hold for $x_0\in\supp(d\mu)$. However, \eqref{1.11} also provides a basis for some
counterexamples to Nevai's condition. In Section~\ref{s3} (see Theorem~\ref{T3.2}), we will prove that

\begin{theorem}\lb{T1.3} If
\begin{equation} \lb{1.14}
\liminf_{n\to\infty}\, (\abs{p_n(x_0)}^2 + \abs{p_{n+1}(x_0)}^2)^{1/n} >1
\end{equation}
then \eqref{1.11} fails. \
\end{theorem}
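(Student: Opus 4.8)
The plan is to show that the hypothesis \eqref{1.14} forces the partial sums $\sum_{j=0}^n (\abs{p_j(x_0)}^2)$ to grow subexponentially while infinitely often $\abs{p_n(x_0)}^2 + \abs{p_{n+1}(x_0)}^2$ is exponentially large, so that their ratio cannot tend to zero. First I would set $c_n = \abs{p_n(x_0)}^2 + \abs{p_{n+1}(x_0)}^2$ and $S_n = \sum_{j=0}^n \abs{p_j(x_0)}^2$, and observe that $c_n \le 2 S_{n+1}$, hence $S_{n+1} \ge \tfrac12 c_n$. Taking $n$-th roots and using \eqref{1.14}, which says $\limsup$-style that $c_n^{1/n} \ge 1 + \delta$ for some $\delta > 0$ and \emph{all large} $n$ (it is a $\liminf$, so this holds eventually), we get $S_{n+1}^{1/n} \ge (1+\delta)\,(1/2)^{1/n} \to 1 + \delta$, so $\limsup_n S_n^{1/n} \ge 1 + \delta > 1$. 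This is the easy half and already shows $S_n$ grows exponentially.

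The real content is to prevent the numerator in \eqref{1.11} from being negligible against this growth. Here I would use \eqref{1.12}–\eqref{1.13}, whose equivalence with \eqref{1.11} is asserted in the excerpt (Proposition~\ref{P2.1}), together with the three-term recurrence \eqref{1.1} and the boundedness \eqref{1.13a} of the $a_n$ — wait, actually Theorem~\ref{T1.3} as stated does not assume \eqref{1.13a}, so I should avoid it. Instead, argue directly: suppose for contradiction that \eqref{1.11} holds. Then by Proposition~\ref{P2.1}, \eqref{1.12} holds, i.e. $c_{n-1}/S_n \to 0$. But from \eqref{1.14} we have, for all large $n$, $c_{n-1} \ge (1+\delta)^{n-1}$ eventually — no, again \eqref{1.14} is a $\liminf$ of $c_n^{1/n}$, so it gives $c_n \ge (1+\delta)^n$ for all sufficiently large $n$. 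Meanwhile $S_n = \sum_{j=0}^n \abs{p_j}^2 \le \sum_{j=0}^n c_j \le (n+1)\max_{j\le n} c_j$; this upper bound is too crude. The clean route: from $S_n = S_{n-1} + \abs{p_n(x_0)}^2 \le S_{n-1} + c_{n-1}$ and $c_{n-1}/S_n \to 0$, we get $S_n/S_{n-1} \to 1$, hence $S_n^{1/n} \to 1$ (a sequence of positive reals with consecutive ratios tending to $1$ has $n$-th root tending to $1$), contradicting $\limsup_n S_n^{1/n} \ge 1 + \delta$ established above.

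So the structure is: (i) derive $\limsup_n S_n^{1/n} \ge 1+\delta$ from \eqref{1.14} via $S_{n+1} \ge \tfrac12 c_n$; (ii) assume \eqref{1.11} holds, invoke Proposition~\ref{P2.1} to get \eqref{1.12}, i.e. $c_{n-1}/S_n \to 0$; (iii) from \eqref{1.12} plus $S_n \le S_{n-1} + c_{n-1}$ deduce $S_n/S_{n-1}\to 1$ and hence $S_n^{1/n}\to 1$; (iv) contradict (i). I expect the main subtlety to be purely bookkeeping with the $\liminf$ in \eqref{1.14} versus the $\limsup$ in the growth of $S_n$ — one must be careful that \eqref{1.14} gives a lower bound on $c_n$ for \emph{all} large $n$, which is exactly what makes step (i) produce a genuine $\limsup$ lower bound for $S_n^{1/n}$; no hard analysis is involved beyond that, and no use of \eqref{1.13a} is needed.
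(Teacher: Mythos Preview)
Your proposal is correct and follows essentially the same route as the paper. The paper packages the argument as Proposition~3.1: assuming \eqref{1.11}, one has $K_{n-1}(x_0,x_0)/K_n(x_0,x_0)\to 1$ (your step (iii), via $S_{n-1}/S_n = 1 - |p_n|^2/S_n$), hence $\limsup K_n^{1/n}\le 1$, and since $|p_n|^2+|p_{n+1}|^2\le K_{n+1}$ this gives $\limsup(|p_n|^2+|p_{n+1}|^2)^{1/n}\le 1$; Theorem~\ref{T1.3} is then the contrapositive. Your detour through Proposition~\ref{P2.1} to reach \eqref{1.12} is unnecessary (you can read $S_n/S_{n-1}\to 1$ directly off \eqref{1.11}), and the factor $\tfrac12$ in $S_{n+1}\ge\tfrac12 c_n$ is superfluous since in fact $c_n\le S_{n+1}$, but these are cosmetic; the substance is identical, and you are right that \eqref{1.13a} is not used.
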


This was the basis for the first counterexample to \eqref{1.11} by Szwarc \cite{Szwarc}; in Section~\ref{s3},
we will see that the Anderson model provides an example of a measure on $[-2,2]$ for which the Nevai condition
fails for Lebesgue a.e.\ $x_0\in [-2,2]$!

Of course, \eqref{1.14} is associated with positive Lyapunov exponent. One might guess that zero Lyapunov
exponent implies \eqref{1.11}. A main impetus for this paper was our realization that this is not true!
Recall that a measure on $[-2,2]$ is called {\it regular\/} if and only if $\lim (a_1\cdots a_n)^{1/n} =1$
(see Stahl--Totik \cite{StT} or Simon \cite{EqMC} for reviews) and that regular measures have zero Lyapunov
exponent, that is, for quasi-every (namely, outside, possibly, a set of zero logarithmic capacity) $x_0\in [-2,2]$,
\begin{equation} \lb{1.14x}
\lim_{n\to\infty}\, \f{1}{n}\, \log(\abs{p_n(x_0)}^2 + \abs{p_{n+1}(x_0)}^2)^{1/2} =0
\end{equation}

In Sections~\ref{s4} and \ref{s5}, we provide two examples of regular measures on $[-2,2]$ for which
Nevai's condition fails for Lebesgue a.e.\ $x_0$ in $\pm (1,2)$. The example in Section~\ref{s4} will
be somewhat simpler but will have no a.c.\ spectrum, while that in Section~\ref{s5} will have pure
a.c.\ spectrum on $(-1,1)$.

We also want to discuss extensions of Nevai's theorem, Theorem~\ref{T1.1}. In this regard, we should
mention some beautiful work of Nevai--Totik--Zhang \cite{NTZ91} and Zhang \cite{Zhang} that already
expanded this. The first paper proved uniform convergence on $[-2,2]$ with an elegant approach; this
was extended in the second paper to $(a_n,b_n)$ approaching a periodic limit. Somewhat earlier,
Lubinsky--Nevai \cite{LN} had proven the Nevai condition for this periodic limit case but only uniformly
on compact subsets of the interior of the spectrum. That paper also has results on subexponential growth
for some cases of measures that do not have compact support, a subject beyond the scope of this paper.
Still later, another proof for the periodic limit case was found by Szwarc \cite{Szwarc2}.

In the past four years, it has become clear that the proper analog of the Nevai class for the periodic
case is not approach to a fixed periodic element but approach to an isospectral torus. We want to prove
that not only can one do this in the periodic case, but on approach to the isospectral torus of, in
general, almost periodic Jacobi matrices that occurs in the general finite gap case. We will also study
the Nevai condition on the a.c.\ spectrum of ergodic Jacobi matrices and for the Fibonacci model.

From what we have said so far, it appears to be a mixed verdict on the Nevai condition since we have
results on when it occurs and also on when it doesn't. But we want to reinterpret the negative results.
These all provide examples where it is not true that the Nevai condition holds {\it everywhere\/} on the
topological support of $d\rho$ or even that it fails for Lebesgue a.e.\ $x_0$ on the support. We believe
it is likely that the following more refined property could be true:

\begin{conjecture}\lb{Con1.4} For any measure, $d\rho$, with compact support, the Nevai condition holds
for $d\rho$-a.e.\ $x_0$ in $\supp(d\rho)$.
\end{conjecture}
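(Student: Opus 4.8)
The plan is to reduce the conjecture to a sharp pointwise statement about the orthonormal polynomials and then to treat $d\rho$ according to its spectral type. Combining \eqref{1.8} with the Christoffel--Darboux formula \eqref{1.3} and orthonormality gives the exact identity
\[
\int (x-x_0)^2\, d\eta_n^{(x_0)}(x) = \f{a_{n+1}^2\bigl(p_n(x_0)^2 + p_{n+1}(x_0)^2\bigr)}{K_n(x_0,x_0)},
\]
using $(x-x_0)K_n(x,x_0)=a_{n+1}\bigl(p_{n+1}(x)p_n(x_0)-p_n(x)p_{n+1}(x_0)\bigr)$ and $\int\bigl(p_{n+1}\,p_n(x_0)-p_n\,p_{n+1}(x_0)\bigr)^2\,d\rho=p_n(x_0)^2+p_{n+1}(x_0)^2$. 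Since $\supp(d\rho)$ is compact, all the $d\eta_n^{(x_0)}$ live in one fixed interval and $a_{n+1}\le A_+<\infty$, so by Chebyshev's inequality \eqref{1.9} holds at $x_0$ as soon as
\begin{equation}\lb{eqStar}
\lim_{n\to\infty}\,\f{p_n(x_0)^2+p_{n+1}(x_0)^2}{\sum_{j=0}^n p_j(x_0)^2}=0 .
\end{equation}
(This is essentially \eqref{1.12}; note that because $a_{n+1}\le A_+<\infty$ it suffices to prove \eqref{eqStar}, and no lower bound on the $a_n$ enters---so the case $A_-=0$ of \cite{Dom78} is harmless.) It therefore suffices to prove \eqref{eqStar} for $d\rho$-a.e.\ $x_0$, and since a Borel set is $d\rho$-null iff it is null for each of $d\rho_{\ac},d\rho_{\singc},d\rho_{\pp}$ we may argue separately on the three parts. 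Two general facts will be used: by \eqref{1.5} and the classical limit $\lambda_n(x_0)\downarrow\rho(\{x_0\})$, $K_n(x_0,x_0)\to\infty$ at every non-atom $x_0$; and $\int K_n(x,x)\,d\rho(x)=n+1$.

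The pure point and absolutely continuous parts follow from known results. If $x_0$ is an atom of $d\rho$, then $x_0$ is an eigenvalue of the Jacobi matrix and $(p_0(x_0),p_1(x_0),\dots)$ is a nonzero $\ell^2$ vector, so $p_n(x_0)\to 0$ while $\sum_{j=0}^n p_j(x_0)^2$ increases to a finite positive limit, and \eqref{eqStar} holds. For $d\rho_{\ac}$ the key input is that $\{p_n(x_0)\}_n$ is \emph{bounded} for $d\rho_{\ac}$-a.e.\ $x_0$: for Lebesgue-a.e.\ $x_0$ in an essential support of $d\rho_{\ac}$ the transfer matrices $T_n(x_0)$ stay bounded (Last--Simon), hence $\sup_n\abs{p_n(x_0)}<\infty$; for such an $x_0$ we also have $\sum_{j=0}^n p_j(x_0)^2=K_n(x_0,x_0)\to\infty$ (it cannot stay bounded, else $x_0$ would be an eigenvalue), so \eqref{eqStar} again holds.

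The remaining, singular continuous, case is the heart of the matter, and this is where I expect the real obstacle to be---it is the reason the statement is only a conjecture. Neither argument above survives: on a thin s.c.\ spectrum $K_n(x_0,x_0)=\sum_{j\le n}p_j(x_0)^2$ can grow much more slowly than $n$, and $\abs{p_n(x_0)}$ can be unbounded for Lebesgue-a.e.\ $x_0$ in a whole interval---precisely what the sparse-type examples of Sections~\ref{s4}--\ref{s5} realize, the conjecture there surviving only because that bad set is $d\rho$-null. What one does have $d\rho$-a.e.\ is a Schnol-type bound: from $\sum_n\jap{n}^{-1-\veps}\int p_n^2\,d\rho=\sum_n\jap{n}^{-1-\veps}<\infty$ and Tonelli, $p_n(x_0)^2=\oh(n^{1+\veps})$ for every $\veps>0$, $d\rho$-a.e.---but, unlike on the a.c.\ part, $K_n(x_0,x_0)$ need not grow even polynomially on s.c.\ spectrum, so pairing this with $\int K_n\,d\rho=n+1$ gains nothing. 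The route I would try is to use the Jitomirskaya--Last analysis relating $K_n(x_0,x_0)=\norm{p(x_0)}_n^2$, the boundary behaviour of the $m$-function, and the ($d\rho_{\singc}$-a.e.\ unique) subordinate solution, to show that for $d\rho_{\singc}$-a.e.\ $x_0$ the running sums vary slowly, i.e.\ $K_n(x_0,x_0)/K_{n-1}(x_0,x_0)\to 1$, which is exactly \eqref{eqStar}. Concretely, for each $\delta>0$ one would estimate $\rho\bigl(\{x_0:p_n(x_0)^2>\delta\,K_{n-1}(x_0,x_0)\}\bigr)$ and try to prove it summable in $n$ (or summable along a density-one set of $n$), using $\int p_n^2\,d\rho=1$ together with a $d\rho$-a.e.\ lower bound for $K_{n-1}(x_0,x_0)$ valid off a controlled exceptional set.

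This last estimate is the genuine difficulty: no $d\rho$-a.e.\ lower bound of the form $K_n(x_0,x_0)\gtrsim n^{\alpha}$ holds on arbitrary singular continuous spectra, so the naive pairing just described is not enough, and controlling how orthonormal polynomials can sporadically concentrate on positive-$d\rho$-measure sets relative to their running $\ell^2$-mass seems to require new ideas. Since the conjecture is already known---indeed in stronger, everywhere-on-the-spectrum form---for the Nevai class of $[-2,2]$, for the Nevai class of a finite gap set by the main theorem of this paper, and for the Szeg\H{o} class, a proof would presumably have to interpolate between those and the obstructions of Sections~\ref{s3}--\ref{s5}.
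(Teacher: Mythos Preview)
This statement is a \emph{conjecture}; the paper does not prove it, and offers only the discussion in Section~\ref{s8}. Your reduction to \eqref{eqStar} via \eqref{2.14} and your treatment of the pure-point part are correct and match the paper's own remarks there. You are also right that the singular continuous case is the genuine obstruction, and your honest assessment that the Borel--Cantelli/Jitomirskaya--Last route you sketch lacks the needed lower bound on $K_n$ is accurate.

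However, your absolutely continuous argument contains a real gap. The assertion that ``for Lebesgue-a.e.\ $x_0$ in an essential support of $d\rho_{\ac}$ the transfer matrices $T_n(x_0)$ stay bounded (Last--Simon)'' is not a theorem of \cite{S263}; it is precisely the open Schr\"odinger conjecture that the paper discusses at the end of Section~\ref{s8}. What Last--Simon prove is a Ces\`aro/\,$\liminf$-type control of $\norm{T_n(x_0)}$, not $\sup_n\norm{T_n(x_0)}<\infty$, and the latter is known to fail in some settings (see \cite{Jito}). The paper says explicitly that the a.c.\ part of Conjecture~\ref{Con1.4} \emph{would follow} from the currently open version of the Schr\"odinger conjecture---so you have not disposed of the a.c.\ case either. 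In short, only the pure-point part is settled; both the a.c.\ and s.c.\ parts remain open, which is exactly why the statement is labeled a conjecture.
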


Here is a summary of the contents of the rest of the paper. In
Section~\ref{s2}, we prove Theorem~\ref{T1.2}. In Section~\ref{s3},
we discuss cases with positive Lyapunov exponent, including a proof
of Theorem~\ref{T1.3}. Sections~\ref{s4} and \ref{s5} present the
details of the examples of regular measures where the Nevai
condition fails for Lebesgue a.e.\ $x_0$ in a particular open
subset. Section~\ref{s6} presents our version of the NTZ approach,
as preparation for our discussion of the finite gap Nevai class in
Section~\ref{s7}. In Section~\ref{snew8}, we will relate
\eqref{1.11} to the absence of $\ell^2$ solutions of classical right
limits and so recover the results of Section~\ref{s7} and even more
(including Fibonacci models). The ideas of Section~\ref{snew8} seem
to be more generally applicable than Section~\ref{s6}, but the
constants in Section~\ref{s6} are more explicit. Section~\ref{s8}
has some final remarks, including a discussion of
Conjecture~\ref{Con1.4} and a second conjecture (Conjecture~\ref{Con9.5}).
Section~\ref{s8} also notes that if $\f{1}{n} K_n(x_0,x_0)$ has a nonzero limit,
then the Nevai condition holds, and so links this to recent work on that question.
Thus, for those interested in ergodic Schr\"odinger operators, by
the end of this paper, we will have proven the Nevai condition in
the Fibonacci model uniformly on the spectrum and for ergodic models
with a.c.\ spectrum, Lebesgue a.e.\ on the essential support of the a.c.\ spectrum.

\medskip
We would like to thank David Damanik and Svetlana Jitomirskaya for
valuable correspondence and Benjamin Weiss for valuable discussions.
J.B.\ and B.S.\ would like to thank
Ehud~de~Shalit for the hospitality of Hebrew University where some
of this work was done.

\section{Subexponential Decay} \lb{s2}

We begin with the equivalence of \eqref{1.11}, \eqref{1.12}, \eqref{1.13}, and more:

\begin{proposition}\lb{P2.1} Let $c_0,c_1,c_2, \dots$ be a sequence of nonnegative numbers and
\begin{equation} \lb{2.1}
S_n =\sum_{j=0}^n c_j
\end{equation}
Then the following are equivalent:
\begin{alignat}{2}
&\text{\rm{(i)}} \qquad && c_n/S_n\to 0 \lb{2.2} \\
&\text{\rm{(ii)}} \qquad && S_n/S_{n+1} \to 1 \lb{2.3} \\
&\text{\rm{(iii)}} \qquad && c_{n+1}/S_n \to 0 \lb{2.4} \\
&\text{\rm{(iv)}} \qquad && (c_n + c_{n+1})/S_n \to 0 \lb{2.6} \\
&\text{\rm{(v)}} \qquad && (c_{n-1} + c_n)/S_n \to 0 \lb{2.7}
\end{alignat}
\end{proposition}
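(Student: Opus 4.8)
The plan is to funnel all five conditions through (i), using only the two trivial identities $S_{n+1}=S_n+c_{n+1}$ and $S_{n-1}=S_n-c_n$ together with the monotonicity $S_{n-1}\le S_n$, which is all that the nonnegativity of the $c_j$ buys us. First I would dispose of a degenerate case: if every $c_j=0$ all five statements are vacuous, and otherwise $S_n>0$ once $n$ is large; since every assertion in (i)--(v) is a statement about $n\to\infty$, I may and will restrict to such $n$, so that all the ratios are well defined.

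Next I would prove (i) $\iff$ (ii) $\iff$ (iii). From $S_{n-1}=S_n-c_n$ we get $S_{n-1}/S_n=1-c_n/S_n$, so (i), namely $c_n/S_n\to 0$, is literally the same as $S_{n-1}/S_n\to 1$; and the latter is exactly (ii) after the harmless index shift $n\mapsto n+1$. From $S_{n+1}=S_n+c_{n+1}$, dividing by $S_n$ gives $S_n/S_{n+1}=\f{1}{1+c_{n+1}/S_n}$, and since $c_{n+1}/S_n\ge 0$ this tends to $1$ precisely when $c_{n+1}/S_n\to 0$; that is (ii) $\iff$ (iii).

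It remains to handle the two two-term conditions, and here I would simply use that a sum of two nonnegative sequences tends to $0$ if and only if each summand does. Thus (iv), $(c_n+c_{n+1})/S_n\to 0$, is equivalent to the conjunction $c_n/S_n\to 0$ and $c_{n+1}/S_n\to 0$, i.e.\ to (i) together with (iii); since those were just shown equivalent, (iv) $\iff$ (i). Likewise (v), $(c_{n-1}+c_n)/S_n\to 0$, is equivalent to $c_n/S_n\to 0$ and $c_{n-1}/S_n\to 0$. The first is (i); for the second, $S_{n-1}\le S_n$ gives $c_{n-1}/S_n\le c_{n-1}/S_{n-1}$, so (i) applied at index $n-1$ forces $c_{n-1}/S_n\to 0$ as well. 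Conversely (v) trivially yields $c_n/S_n\to 0$. Hence (v) $\iff$ (i), and all five conditions are equivalent.

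Honestly, there is no real obstacle here: the whole proposition is a few lines of elementary manipulation. The only points that deserve a moment's care are the convention about vanishing denominators in the degenerate case, and the observation that the equivalence (i) $\iff$ (ii) is nothing more than a reindexing of the identity $S_{n-1}/S_n=1-c_n/S_n$.
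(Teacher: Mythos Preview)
Your proof is correct and follows essentially the same route as the paper: the identities $S_{n-1}/S_n=1-c_n/S_n$ and $S_{n+1}/S_n=1+c_{n+1}/S_n$ give (i)$\iff$(ii)$\iff$(iii), and then (iv), (v) are handled by the trivial inequalities $c_n/S_n\le(c_n+c_{n\pm 1})/S_n$ together with $c_{n-1}/S_n\le c_{n-1}/S_{n-1}$. Your explicit treatment of the degenerate case $c_j\equiv 0$ is a small bonus the paper omits.
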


\begin{remarks} 1. The relevance, of course, is to
\begin{equation} \lb{2.8}
c_n = p_n(x_0)^2
\end{equation}

\smallskip
2. If $c_n=e^{n^2}$, $c_n/S_n\to 1$ and $c_{n-1}/S_n\to 0$, so (i) is not equivalent to $c_{n-1}/S_n\to 0$.
\end{remarks}

\begin{proof} \ul{(i) $\Leftrightarrow$ (ii)}. \ We have
\begin{equation} \lb{2.9}
S_{n-1}/S_n = 1 - c_n/S_n
\end{equation}
so
\begin{equation} \lb{2.10}
\text{(i)} \Leftrightarrow S_{n-1}/S_n \to 1
\end{equation}
which, by renumbering indices, is equivalent to (ii).

\smallskip
\ul{(ii) $\Leftrightarrow$ (iii)}. \ We have
\begin{equation} \lb{2.11}
S_{n+1}/S_n = 1 + c_{n+1}/S_n
\end{equation}
from which (ii) is equivalent to (iii).

\smallskip
\ul{(iv) or (v) $\Rightarrow$ (i)}. \ Immediate, since for $j=\pm 1$,
\begin{equation} \lb{2.12}
0 \leq c_n/S_n \leq (c_n + c_{n+j})/S_n
\end{equation}

\smallskip
\ul{(i) $\Rightarrow$ (iv)}. \ Immediate from (i) $\Rightarrow$ (iii).

\smallskip
\ul{(i) $\Rightarrow$ (v)}. \ Since (i) $\Rightarrow S_{n-1}/S_n\to 1$, (i) $\Rightarrow c_{n-1}/S_n
\to 0$, from which (v) is immediate.
\end{proof}

\begin{theorem}[$\Rightarrow$ Theorem~\ref{T1.2}] \lb{T2.2} Suppose that \eqref{1.13a} holds. Nevai's
condition is equivalent to
\begin{equation} \lb{2.13}
\int (x-x_0)^2\, d\eta_n^{(x_0)} (x) \to 0
\end{equation}
and \eqref{2.13} holds if and only if \eqref{1.11} holds.
\end{theorem}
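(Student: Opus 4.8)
The plan is to treat the two asserted equivalences separately, since the first is soft measure theory while the second carries the real content. For \emph{Nevai's condition $\Leftrightarrow$ \eqref{2.13}}: all the $d\eta_n^{(x_0)}$ are probability measures supported on the fixed compact set $\supp(d\rho)$, so weak-$*$ convergence to $\delta_{x_0}$ is the same as $\int f\, d\eta_n^{(x_0)}\to f(x_0)$ for every continuous $f$. If \eqref{1.9} holds, testing against $f(x)=(x-x_0)^2$, which is continuous and bounded on $\supp(d\rho)$, gives \eqref{2.13} at once. Conversely, if \eqref{2.13} holds, Chebyshev's inequality yields $\eta_n^{(x_0)}(\{\abs{x-x_0}>\veps\})\le\veps^{-2}\int(x-x_0)^2\, d\eta_n^{(x_0)}\to 0$ for every $\veps>0$; since the $\eta_n^{(x_0)}$ are probabilities, this forces $\wlim d\eta_n^{(x_0)}=\delta_{x_0}$. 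This half uses neither \eqref{1.13a} nor any orthogonal-polynomial structure.

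The substantive step is to compute \eqref{2.13} in closed form. From \eqref{1.8}, $\int(x-x_0)^2\, d\eta_n^{(x_0)}(x)=K_n(x_0,x_0)^{-1}\int(x-x_0)^2 K_n(x,x_0)^2\, d\rho(x)$. The CD formula \eqref{1.3} (with $y=x_0$) gives $(x-x_0)K_n(x,x_0)=a_{n+1}[p_{n+1}(x)p_n(x_0)-p_n(x)p_{n+1}(x_0)]$, so the numerator equals $a_{n+1}^2\int[p_{n+1}(x)p_n(x_0)-p_n(x)p_{n+1}(x_0)]^2\, d\rho(x)$; expanding the square and using orthonormality $\int p_jp_k\, d\rho=\delta_{jk}$ (the cross term drops, the two diagonal terms give $1$) collapses this to $a_{n+1}^2(p_n(x_0)^2+p_{n+1}(x_0)^2)$. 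Hence
\[
\int(x-x_0)^2\, d\eta_n^{(x_0)}(x)=a_{n+1}^2\,\frac{p_n(x_0)^2+p_{n+1}(x_0)^2}{K_n(x_0,x_0)}=a_{n+1}^2\,\frac{c_n+c_{n+1}}{S_n},
\]
where $c_j=p_j(x_0)^2$ and $S_n=\sum_{j=0}^n c_j=K_n(x_0,x_0)$ in the notation of Proposition~\ref{P2.1}.

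Now \eqref{1.13a} enters: since $0<A_-^2\le a_{n+1}^2\le A_+^2<\infty$, the expression above tends to $0$ if and only if $(c_n+c_{n+1})/S_n\to 0$, which by Proposition~\ref{P2.1} (equivalence of (i) and (iv)) holds if and only if $c_n/S_n\to 0$, i.e.\ \eqref{1.11}; combined with the first step this proves both equivalences. I do not anticipate a genuine obstacle. The only point requiring care is that \eqref{1.13a} is exactly what is needed to shuttle between the integral \eqref{2.13} and the ratio \eqref{1.11}: the (automatic) bound $A_+<\infty$ is used for \eqref{1.11}$\Rightarrow$\eqref{1.9}, and the lower bound $A_->0$ for the converse. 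The one ``clever'' ingredient is recognizing that the CD formula turns the weighted $L^2$ integral $\int(x-x_0)^2 K_n(x,x_0)^2\, d\rho$ into a two-term expression; everything else is bookkeeping and an appeal to Proposition~\ref{P2.1}.
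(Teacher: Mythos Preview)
Your proof is correct and follows essentially the same approach as the paper: both derive the key identity $\int(x-x_0)^2\, d\eta_n^{(x_0)}=a_{n+1}^2[p_n(x_0)^2+p_{n+1}(x_0)^2]/K_n(x_0,x_0)$ via the CD formula and orthonormality, then invoke \eqref{1.13a} and Proposition~\ref{P2.1}(i)$\Leftrightarrow$(iv), and handle the soft equivalence of weak convergence to $\delta_{x_0}$ with vanishing second moment for probability measures on a fixed compact. You simply spell out in more detail (e.g., the Chebyshev argument and the expansion of the square) what the paper compresses into citations.
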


\begin{proof} By \eqref{1.8}, \eqref{1.3}, and the orthogonality of $p_n$ to $p_{n+1}$,
\begin{equation} \lb{2.14}
\int (x-x_0)^2\, d\eta_n^{(x_0)}(x) = \f{a_{n+1}^2 [p_n(x_0)^2 + p_{n+1}(x_0)^2]}{K_n(x_0,x_0)}
\end{equation}
By \eqref{1.13a} and (i) $\Leftrightarrow$ (iv) in Proposition~\ref{P2.1},
\[
\text{\eqref{2.13}} \Leftrightarrow \f{p_n(x_0)^2 + p_{n+1}(x_0)^2}{K_n(x_0,x_0)} \to 0
\Leftrightarrow\text{\eqref{1.11}}
\]

For measures, $\{ \nu_n \}$, all supported in a fixed compact,
$\nu_n\overset{w}{\longrightarrow} \delta_{x_0} \Leftrightarrow \int
(x-x_0)^2\, d\nu_n \to 0$.
\end{proof}

In the example in Remark~2 after Proposition~\ref{P2.1},
$(c_{n-2}+c_{n-1})/S_n \to 0$ but \eqref{2.2} fails. However, this
cannot happen for the case $c_n=p_n (x_0)$: Note that if
\eqref{1.13a}
 holds,
\begin{equation}\lb{2.14x}
\abs{p_n(x_0)} \leq A_-^{-1} [A_+ + \abs{x_0} + \sup_n \, \abs{b_n}] \,
[\abs{p_{n-2}(x_0)} + \abs{p_{n-1}(x_0)}]
\end{equation}
since
\begin{equation}\lb{2.15}
p_n(x_0) = a_n^{-1} ((x_0-b_n) p_{n-1}(x_0) -a_{n-1} p_{n-2}(x_0))
\end{equation}
and note the following obvious fact:

\begin{proposition}\lb{P2.3} Under the hypotheses and notation of Proposition~\ref{P2.1}, if there is a
constant $K$ so that
\begin{equation}\lb{2.16}
c_n \leq K (c_{n-2} +c_{n-1})
\end{equation}
then \eqref{2.2} is equivalent to
\begin{equation}\lb{2.17}
\f{c_{n-2}+c_{n-1}}{S_n} \to 0
\end{equation}
\end{proposition}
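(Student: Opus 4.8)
The plan is to show the two-way implication directly, using only elementary manipulations of the partial sums $S_n$ together with the growth bound \eqref{2.16}. The direction \eqref{2.17} $\Rightarrow$ \eqref{2.2} is free: since $c_{n-2}+c_{n-1}\ge c_n$ trivially fails in general, but instead we use $0\le c_{n-2}\le S_n$ is not what we want either; rather, the cleanest route is to observe that \eqref{2.17} controls $S_{n-1}/S_{n-2}$ and hence $S_n/S_{n-1}$. Concretely, $(c_{n-2}+c_{n-1})/S_n\to0$ gives $c_{n-1}/S_n\to 0$ and $c_{n-2}/S_n\to 0$; since $S_{n-2}\le S_n$, dividing instead by the smaller index requires an a priori bound on $S_n/S_{n-2}$, which is exactly where \eqref{2.16} re-enters.

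So here is the order I would carry it out. \textbf{Step 1.} Note \eqref{2.16} forces $c_n/S_{n-1}\le K(c_{n-2}+c_{n-1})/S_{n-1}\le K(c_{n-2}+c_{n-1})/S_{n-2}$, which is a bound purely in terms of lower-index data. \textbf{Step 2.} Assuming \eqref{2.17}, write $S_n/S_{n-2} = 1 + (c_{n-1}+c_n)/S_{n-2}$. The term $c_{n-1}/S_{n-2}\le (c_{n-2}+c_{n-1})/S_{n-2}$, and if we knew $(c_{n-2}+c_{n-1})/S_{n-2}\to 0$ we would be done; but \eqref{2.17} only gives $(c_{n-2}+c_{n-1})/S_n\to 0$. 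The fix: show $S_n/S_{n-2}$ stays bounded. Indeed $S_n/S_{n-2} = 1 + (c_{n-1}+c_n)/S_{n-2}$, and using Step 1, $c_n/S_{n-2}\le K(c_{n-2}+c_{n-1})/S_{n-2}$, while $c_{n-1}/S_{n-2}\le S_{n-1}/S_{n-2}-1$. This looks circular, so instead bound $c_{n-1}/S_{n-2}$ by $1$ crudely (since $c_{n-1}\le S_{n-1}$, not $S_{n-2}$ — so this too needs care). The genuinely clean argument: from \eqref{2.17}, for large $n$ we have $c_{n-2}+c_{n-1}\le \tfrac12 S_n$, hence $S_{n-2}=S_n-(c_{n-1}+c_n)\ge S_n - \tfrac12 S_n - c_n = \tfrac12 S_n - c_n$; combined with $c_n\le K(c_{n-2}+c_{n-1})\le \tfrac{K}{2}S_n$, we get $S_{n-2}\ge \tfrac12(1-K)S_n$ only when $K<1$, which is not assumed. \textbf{Step 2'.} Therefore do it in the reverse grouping: $S_n = S_{n-2}+c_{n-1}+c_n \le S_{n-2} + (1+K)(c_{n-2}+c_{n-1})$, so $S_n/S_{n-2}\le 1 + (1+K)(c_{n-2}+c_{n-1})/S_{n-2}$. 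Let $t_n = (c_{n-2}+c_{n-1})/S_{n-2}$ and $u_n = (c_{n-2}+c_{n-1})/S_n$; then $t_n = u_n\cdot S_n/S_{n-2}\le u_n(1+(1+K)t_n)$, giving $t_n(1-(1+K)u_n)\le u_n$, so once $u_n<\tfrac{1}{2(1+K)}$ (true for large $n$ by \eqref{2.17}) we get $t_n\le 2u_n\to 0$. Hence $t_n\to 0$, which is precisely \eqref{2.17} with $n$ shifted, and in particular $c_n/S_n\le c_n/S_{n-2}\le K t_n\to 0$, i.e. \eqref{2.2}.

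\textbf{Step 3.} For the converse \eqref{2.2} $\Rightarrow$ \eqref{2.17}, we do not even need \eqref{2.16}: by Proposition~\ref{P2.1}, \eqref{2.2} implies $S_{n-1}/S_n\to 1$ and $S_{n-2}/S_n\to 1$, and it implies $c_{n-1}/S_{n-1}\to 0$ and $c_{n-2}/S_{n-2}\to 0$; multiplying by the convergent ratios gives $c_{n-1}/S_n\to 0$ and $c_{n-2}/S_n\to 0$, whence \eqref{2.17}. (This matches the paper's remark that the failure of the ``$c_{n-1}/S_n$'' version for $c_n=e^{n^2}$ is only an obstruction in the other direction.)

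The main obstacle is the apparent circularity in Step 2: \eqref{2.17} gives smallness of $(c_{n-2}+c_{n-1})/S_n$ but the conclusion \eqref{2.2} and the bound \eqref{2.16} most naturally involve $S_{n-2}$ in the denominator, and transferring between the two denominators requires controlling $S_n/S_{n-2}$, which itself involves the quantities we are trying to bound. The resolution is the self-improving inequality $t_n\le u_n(1+(1+K)t_n)$ in Step 2', which turns the smallness of $u_n$ into smallness of $t_n$ without any circularity, using only that $u_n\to 0$. Once that is in place, everything else is a one-line estimate.
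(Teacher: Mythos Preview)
Your argument is correct, and Step~3 matches the intended reasoning for \eqref{2.2} $\Rightarrow$ \eqref{2.17}. But for the other direction you have taken a long detour around a one-line fact. The paper states Proposition~\ref{P2.3} as ``the following obvious fact'' and gives no proof, because \eqref{2.17} $\Rightarrow$ \eqref{2.2} is immediate: divide \eqref{2.16} by $S_n$ (not by $S_{n-1}$ or $S_{n-2}$) to get
\[
\frac{c_n}{S_n} \leq K\,\frac{c_{n-2}+c_{n-1}}{S_n} \to 0,
\]
which is exactly \eqref{2.2}. There is no need to transfer denominators from $S_n$ to $S_{n-2}$, no need to bound $S_n/S_{n-2}$, and no self-improving inequality.

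All the apparent circularity you wrestle with in Steps~1, 2, and 2' arises only because you chose to divide by $S_{n-1}$ or $S_{n-2}$ in Step~1, which then forces you to recover control of $S_n/S_{n-2}$. Your bootstrapping inequality $t_n(1-(1+K)u_n)\le u_n$ is a nice device and does work, but it is solving a problem you created for yourself. The moral: when the hypothesis already hands you the right denominator, keep it.
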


\section{Positive Lyapunov Exponent} \lb{s3}

We begin by proving a contrapositive of Theorem~\ref{T1.3}:

\begin{proposition}\lb{P3.1} If \eqref{1.11} holds, then
\begin{equation} \lb{3.1}
\limsup_{n\to\infty}\, K_n(x_0,x_0)^{1/n} \leq 1
\end{equation}
So, in particular,
\begin{equation} \lb{3.2}
\limsup_{n\to\infty}\, (\abs{p_n(x_0)}^2 + \abs{p_{n+1}(x_0)}^2)^{1/n} \leq 1
\end{equation}
\end{proposition}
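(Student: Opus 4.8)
The plan is to phrase everything in terms of the nonnegative sequence $c_j = p_j(x_0)^2$ and its partial sums $S_n = \sum_{j=0}^n c_j = K_n(x_0,x_0)$. These are positive (since $c_0 = p_0(x_0)^2 > 0$) and nondecreasing, and in this notation hypothesis \eqref{1.11} is precisely condition (i) of Proposition~\ref{P2.1}, i.e.\ $c_n/S_n \to 0$, while the claim \eqref{3.1} says $\limsup_n S_n^{1/n} \leq 1$.

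First I would extract a one-step multiplicative bound. Fix $\veps \in (0,1)$. By \eqref{1.11} there is an $N$ with $c_n \leq \veps S_n$ for all $n \geq N$; since $S_n = S_{n-1} + c_n$, this gives $(1-\veps)S_n \leq S_{n-1}$, i.e.\ $S_n \leq (1-\veps)^{-1} S_{n-1}$ for $n > N$. Iterating from $N$ yields $S_n \leq S_N (1-\veps)^{-(n-N)}$, hence $S_n^{1/n} \leq S_N^{1/n}(1-\veps)^{-(n-N)/n} \to (1-\veps)^{-1}$ as $n\to\infty$. Therefore $\limsup_n S_n^{1/n} \leq (1-\veps)^{-1}$ for every $\veps \in (0,1)$, and letting $\veps \downarrow 0$ gives \eqref{3.1}. (Equivalently, one could invoke Proposition~\ref{P2.1}(ii), which converts \eqref{1.11} into $S_{n-1}/S_n \to 1$, and then note $\frac{1}{n}\log S_n = \frac{1}{n}\log S_0 + \frac{1}{n}\sum_{k=1}^n \log(S_k/S_{k-1}) \to 0$ by Ces\`aro averaging, which in fact shows $S_n^{1/n}\to 1$.)

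For \eqref{3.2}, observe that $p_n(x_0)^2 + p_{n+1}(x_0)^2 = c_n + c_{n+1} \leq S_{n+1} = K_{n+1}(x_0,x_0)$, so $(p_n(x_0)^2 + p_{n+1}(x_0)^2)^{1/n} \leq K_{n+1}(x_0,x_0)^{1/n}$. Writing $K_{n+1}(x_0,x_0)^{1/n} = \bigl(K_{n+1}(x_0,x_0)^{1/(n+1)}\bigr)^{(n+1)/n}$ and using $(n+1)/n \to 1$, the right-hand side has the same $\limsup$ as $K_n(x_0,x_0)^{1/n}$, which is $\leq 1$ by \eqref{3.1}; this gives \eqref{3.2}.

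There is no genuine obstacle here; the argument is elementary. The only point needing a little care is the index-and-root bookkeeping in the last step: passing from $K_{n+1}$ back to $K_n$ and checking that raising to the power $1/n$ rather than $1/(n+1)$ does not affect the $\limsup$ — which it does not, because \eqref{3.1} already bounds the growth of $K_n(x_0,x_0)$ by a subexponential (indeed, any exponential) rate.
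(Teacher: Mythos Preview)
Your proof is correct and follows essentially the same route as the paper: from $c_n/S_n\leq\veps$ you get $S_n\leq(1-\veps)^{-1}S_{n-1}$, iterate to bound $S_n^{1/n}$, and then use $c_n+c_{n+1}\leq S_{n+1}$ for \eqref{3.2}. Your parenthetical Ces\`aro argument is a pleasant extra, since it actually yields $S_n^{1/n}\to 1$ rather than merely the $\limsup$ bound.
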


\begin{proof} Given $\veps$, pick $N$ so for $n\geq N$,
\begin{equation} \lb{3.3}
\f{\abs{p_n(x_0)}^2}{K_n(x_0,x_0)} \leq \veps
\end{equation}
Then
\begin{equation} \lb{3.4}
\f{K_{n-1}(x_0,x_0)}{K_n(x_0,x_0)}\geq 1-\veps
\end{equation}
so, for $n\geq N$,
\begin{equation} \lb{3.5}
K_n(x_0,x_0) \leq (1-\veps)^{-1} K_{n-1} (x_0,x_0)
\end{equation}
which implies
\begin{equation} \lb{3.6}
\limsup\, K_n(x_0,x_0)^{1/n} \leq (1-\veps)^{-1}
\end{equation}

Since $\veps$ is arbitrary, \eqref{3.1} holds. Thus, since
\begin{equation} \lb{3.7}
(\abs{p_n(x_0)}^2 + \abs{p_{n+1}(x_0)}^2)^{1/n} \leq K_{n+1}(x_0,x_0)^{1/n}
\end{equation}
we get \eqref{3.2}.
\end{proof}

\begin{theorem}[$\equiv$ Theorem~\ref{T1.3}] \lb{T3.2} \eqref{1.14} $\Rightarrow$ not \eqref{1.11}.
\end{theorem}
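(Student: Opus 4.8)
The plan is to obtain Theorem~\ref{T3.2} as an immediate contrapositive of Proposition~\ref{P3.1}, which has just been established. Proposition~\ref{P3.1} says that \eqref{1.11} implies
\[
\limsup_{n\to\infty}\, (\abs{p_n(x_0)}^2 + \abs{p_{n+1}(x_0)}^2)^{1/n} \leq 1 ,
\]
so once this is in hand there is nothing left to do but unwind the logic.

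Concretely, suppose for contradiction that \eqref{1.11} holds at $x_0$. Then \eqref{3.2} in Proposition~\ref{P3.1} gives
\[
\limsup_{n\to\infty}\, (\abs{p_n(x_0)}^2 + \abs{p_{n+1}(x_0)}^2)^{1/n} \leq 1 .
\]
Since $\liminf \leq \limsup$ for any real sequence, this forces
\[
\liminf_{n\to\infty}\, (\abs{p_n(x_0)}^2 + \abs{p_{n+1}(x_0)}^2)^{1/n} \leq 1 ,
\]
which is exactly the negation of the hypothesis \eqref{1.14}. Hence \eqref{1.14} cannot hold together with \eqref{1.11}, i.e., \eqref{1.14} $\Rightarrow$ not \eqref{1.11}.

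There is essentially no obstacle: the substantive work was already absorbed into Proposition~\ref{P3.1}, whose proof used only the monotonicity $K_{n-1}(x_0,x_0)\leq K_n(x_0,x_0)$, the bound \eqref{3.3} furnished by \eqref{1.11}, and the elementary domination \eqref{3.7} of $\abs{p_n(x_0)}^2+\abs{p_{n+1}(x_0)}^2$ by $K_{n+1}(x_0,x_0)$. The only points requiring a moment's care are the harmless index shift $n\mapsto n+1$ in \eqref{3.7} and keeping straight that one passes from a $\limsup$ statement to a $\liminf$ statement via the trivial inequality $\liminf\leq\limsup$; neither affects the root asymptotics.
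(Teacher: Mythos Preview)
Your proof is correct and takes essentially the same approach as the paper: both simply observe that Theorem~\ref{T3.2} is the contrapositive of the last statement in Proposition~\ref{P3.1}. You have merely spelled out the trivial $\liminf\leq\limsup$ step that the paper leaves implicit.
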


\begin{proof} As noted, this is a contrapositive of the last statement in Proposition~\ref{P3.1}.
\end{proof}

Recall that the transfer matrix at $x_0$ is defined by
\begin{align}
T_n(x_0) &= A_n(x_0) \dots A_1(x_0) \lb{3.8} \\
A_j(x_0) &= \f{1}{a_j}
\begin{pmatrix}
x_0-b_j & -1 \\
a_j^2 & 0
\end{pmatrix}  \lb{3.9}
\end{align}
so $\det(A_j) =\det(T_n) =1$ and
\begin{equation} \lb{3.10}
\binom{p_n(x_0)}{a_n p_{n-1}(x_0)} = T_n(x_0) \binom{1}{0}
\end{equation}

Recall also that one says the Lyapunov exponent exists if
\begin{equation} \lb{3.11}
\gamma(x_0) = \lim_{n\to\infty}\, \f{1}{n}\, \log \norm{T_n(x_0)}
\end{equation}
exists. The Ruelle--Osceledec theorem (see, e.g., \cite[Thm.~10.5.29]{OPUC2}) says that if $\gamma(x_0)>0$,
then there is a one-dimensional subspace, $V$\!, of $\bbC^2$, so $u\in V\setminus\{0\}$ implies
\begin{equation} \lb{3.12}
\lim_{n\to\infty}\, \norm{T_n(x_0)u}^{1/n} =e^{-\gamma(x_0)}
\end{equation}
and if $u\in\bbC^2\setminus V$\!, then
\begin{equation} \lb{3.13}
\lim_{n\to\infty}\, \norm{T_n(x_0) u}^{1/n} = e^{\gamma(x_0)}
\end{equation}

Thus,

\begin{corollary} \lb{C3.3} Let \eqref{1.13a} hold. If
$\gamma(x_0) >0$, then either
$x_0$ is a pure point of $d\rho$ or else \eqref{1.11} fails.
\end{corollary}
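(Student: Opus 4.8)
The plan is to apply the Ruelle--Osceledec dichotomy \eqref{3.12}--\eqref{3.13} to the initial vector $u_0=\binom{1}{0}$, which by \eqref{3.10} generates the orthogonal polynomial solution. Since $\gamma(x_0)>0$ exists by hypothesis, Ruelle--Osceledec furnishes the exceptional line $V\subset\bbC^2$, and I would split into two cases according to whether $u_0\in V$ or $u_0\notin V$. Throughout, the relevant quantity is $\norm{T_n(x_0)u_0}^2=p_n(x_0)^2+a_n^2 p_{n-1}(x_0)^2$.

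In the case $u_0\notin V$, \eqref{3.13} gives $\lim_n\bigl(p_n(x_0)^2+a_n^2 p_{n-1}(x_0)^2\bigr)^{1/n}=e^{2\gamma(x_0)}>1$. I would then solve the recurrence \eqref{1.1} (a shift of \eqref{2.15}) for $a_n p_{n-1}(x_0)$ in terms of $p_n(x_0)$ and $p_{n+1}(x_0)$ and use \eqref{1.13a} together with $\sup_n\abs{b_n}<\infty$ (valid since $\supp(d\rho)$ is compact) to bound $a_n^2 p_{n-1}(x_0)^2$ by an $n$-independent constant times $p_n(x_0)^2+p_{n+1}(x_0)^2$. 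This yields $\liminf_n\bigl(p_n(x_0)^2+p_{n+1}(x_0)^2\bigr)^{1/n}\ge e^{2\gamma(x_0)}>1$, which is exactly \eqref{1.14}, and then Theorem~\ref{T3.2} (equivalently \eqref{3.2} of Proposition~\ref{P3.1}) forces \eqref{1.11} to fail.

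In the case $u_0\in V$, \eqref{3.12} gives $\norm{T_n(x_0)u_0}^{1/n}\to e^{-\gamma(x_0)}<1$, so $p_n(x_0)^2+a_n^2 p_{n-1}(x_0)^2$ decays exponentially and in particular $\sum_{n\ge 0}p_n(x_0)^2<\infty$. Thus $u:=(p_0(x_0),p_1(x_0),\dots)\in\ell^2$. Since $\supp(d\rho)$ is compact, the Jacobi matrix $J$ is bounded, and $u$ solves $Ju=x_0 u$ (the bulk rows are \eqref{1.1}; the top row is its $n=0$ instance with the convention $p_{-1}\equiv 0$), so $u$ is a genuine eigenvector of $J$. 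Because $d\rho$ is the spectral measure of $J$ for the cyclic vector $\delta_0$ and $p_0(x_0)=p_0\ne 0$, the eigenvalue $x_0$ carries positive $\rho$-mass, i.e.\ $x_0$ is a pure point of $d\rho$. The two cases are exhaustive, which finishes the proof.

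The argument is short and uses nothing beyond Ruelle--Osceledec, Theorem~\ref{T3.2}, and the standard $\ell^2$-eigenvector correspondence. The only mildly delicate point is the bookkeeping in the first case, namely passing from $\norm{T_n(x_0)u_0}$ (which couples $p_n(x_0)$ with the weighted term $a_n^2 p_{n-1}(x_0)^2$) to the symmetric unweighted pair in \eqref{1.14}; this is handled exactly as in \eqref{2.14x}--\eqref{2.15}, so no genuinely new input is needed.
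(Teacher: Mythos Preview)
Your proof is correct and follows essentially the same route as the paper: apply the Ruelle--Osceledec dichotomy to $u_0=\binom{1}{0}$, with the decaying branch giving an $\ell^2$ eigenfunction (pure point) and the growing branch giving \eqref{1.14} via Theorem~\ref{T3.2}. The only cosmetic difference is that the paper passes from $\norm{T_n u_0}$ to $\abs{p_n}^2+\abs{p_{n+1}}^2$ directly using $A_->0$ (which bounds $a_n^2 p_{n-1}^2$ above and below by constant multiples of $p_{n-1}^2$, then shifts the index), whereas you go through the recurrence \eqref{2.15}; both work.
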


\begin{proof} If \eqref{3.12} holds for $u=\binom{1}{0}$, then by \eqref{3.10}, $\abs{p_n(x_0)}\leq
Ce^{-\gamma(x_0)n/2}$ so $p_n\in\ell^2$ and $x_0$ is a pure point. So if $x_0$ is not a pure point, then
\eqref{3.13} holds, so since $A_- >0$, \eqref{3.13} implies
\begin{equation} \lb{3.14}
\lim_{n\to\infty}\, (\abs{p_n(x_0)}^2 + \abs{p_{n+1}(x_0)}^2)^{1/n} = e^{\gamma(x_0)} >1
\end{equation}
and so \eqref{1.11} fails.
\end{proof}

\begin{example}[Szwarc \cite{Szwarc}] \lb{E3.4} This example is mainly of historical interest. This is a
modification of \cite{Szwarc}, but uses the key notion of having an isolated point of $\sigma_\ess (J)$,
which is not an eigenvalue. We begin by noting that the whole-line Jacobi matrix, $J_\infty$, with
$a_n\equiv 1$, $b_j=0$ ($j\neq 0$) and $b_0=\f32$ has $E=\f52$ ($=2+\f12$) as an eigenvalue with
eigenfunction $(\f12)^{\abs{n}}$, so $\sigma(J_\infty)=[-2,2]\cup\{\f52\}$.

Now let $J$ be the one-sided Jacobi matrix with $a_n\equiv 1$ and
\begin{equation} \lb{3.15}
b_n = \begin{cases}
\beta & n=1 \\
\f32 & n=k^2, \, k=2,3,\dots \\
0 & n\neq k^2 \text{ all } k
\end{cases}
\end{equation}
where $\beta$ will be adjusted below.

Standard right-limit theorems (see, e.g., \cite{LaSi2}) imply
\[
\sigma_\ess (J) = [-2,2]\cup\{\tfrac52\}
\]
so, in particular, $\f52\in\sigma(J)$. Moreover, since the nonzero $b_n$'s are of zero density, it is easy to
see that
\begin{align}
\gamma(\tfrac52) &= \log\biggl(\text{spectral radius of }
\begin{pmatrix} \f52 & -1 \\
1 & 0
\end{pmatrix} \biggr) \notag \\
&= \log(2) \lb{3.16}
\end{align}
since $\left( \begin{smallmatrix} \f52 & -1 \\ 1 & 0 \end{smallmatrix}\right)$ has eigenvalues $2$ and $\f12$.

It is easy to see that there is exactly one choice of $\beta$ for which $\f52$ is an eigenvalue of $J$. For
any other choice, Corollary~\ref{C3.3} is applicable, and so the Nevai condition fails at $\f52\in\sigma(J)$.
\qed
\end{example}

\begin{example}\lb{E3.5} Let $a_n\equiv\f12$ and $b_n(\omega)$ be i.i.d.\ random variables uniformly distributed
in $[-1,1]$. This is an Anderson model for which it is well-known that for a.e.\ choice of $\omega$ (see, e.g.,
\cite{CFKS}), the associated measure $d\rho_\omega$ has pure point spectrum filling $[-2,2]$. Moreover, for
a.e.\ $\omega$ and quasi-every $x_0\in [-2,2]$, one has Lyapunov exponent $\gamma(x_0) >0$. Since the set of
eigenvalues is countable, for quasi-every $x_0\in [-2,2]$, Corollary~\ref{C3.3} is applicable. Thus,
$\sigma(J_\omega)=[-2,2]$, but for quasi-every $x_0\in [-2,2]$, the Nevai condition fails.
\qed
\end{example}

\section[A Regular Measure]{A Regular Measure for Which the Nevai Condition Fails
on a Set of Positive Lebesgue Measure} \lb{s4}

\begin{example}\lb{E4.1} Let $J$ be a Jacobi matrix with $b_n\equiv 0$ and $a_n$ described as follows. Partition
$\{1,2,\dots\}$ into successive blocks $A_1, C_1, A_2, C_2 \dots$, where
\begin{equation} \lb{4.1}
\#(A_j) =3^{j^2} \qquad \#(C_j) =2^{j^2}
\end{equation}
On $A_j$, $a_n\equiv 1$ and on $C_j$, $a_n\equiv \f12$. Since the $3^{j^2}$ blocks are much larger than the
$2^{j^2}$ blocks,
\begin{equation} \lb{4.2}
\lim_{n\to\infty}\, (a_1 \dots a_n)^{1/n}=1
\end{equation}
and since $\abs{a_n}\leq 1$, $\norm{J}\leq 2$ so $\sigma(J)\subset
[-2,2]$. By a simple trial function argument, $[-2,2]\subset
\sigma(J)$. Thus, $J$ is regular for $[-2,2]$, but, of course, not
in Nevai class since $a_n\nrightarrow 1$.
\end{example}

We will prove that

\begin{theorem}\lb{T4.2} For Lebesgue a.e.\ $x_0\in [-2,2]\setminus [-1,1]$, the Nevai condition fails.
\end{theorem}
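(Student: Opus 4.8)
\smallskip
\noindent\textbf{Proof strategy.} The plan is to show that \eqref{1.11} fails for Lebesgue a.e.\ $x_0\in(1,2)$. Since $b_n\equiv 0$ gives $p_n(-x)=(-1)^np_n(x)$, hence $K_n(-x_0,-x_0)=K_n(x_0,x_0)$, the same will then follow a.e.\ on $(-2,-1)$; and as $\tfrac12\le a_n\le 1$ here, \eqref{1.13a} holds, so by Theorem~\ref{T1.2} this is exactly failure of the Nevai condition. Fix $\veps>0$, work on $I_\veps=[1+\veps,2-\veps]$, and let $\veps\downarrow0$ at the very end. Only two one-step transfer matrices \eqref{3.9} occur: on a $C_j$-block $A_C(x_0)=\bigl(\begin{smallmatrix}2x_0&-2\\1/2&0\end{smallmatrix}\bigr)$, which for $x_0\in I_\veps$ is hyperbolic with eigenvalues $\lambda_\pm(x_0)=x_0\pm\sqrt{x_0^2-1}$ (so $\lambda_+\lambda_-=1$, $\lambda_+(x_0)\ge\lambda_+(1+\veps)>1$) and eigenvectors $v_\pm(x_0)$ at a uniformly nondegenerate angle; on an $A_j$-block $A_A(x_0)=\bigl(\begin{smallmatrix}x_0&-1\\1&0\end{smallmatrix}\bigr)$, which for $x_0\in I_\veps$ is elliptic with rotation angle $\theta(x_0)=\arccos(x_0/2)$ bounded away from $0$ and $\pi$, so $A_A(x_0)=S(x_0)R_{\theta(x_0)}S(x_0)^{-1}$ with $\|S(x_0)^{\pm1}\|\le C(\veps)$ and hence $\|A_A(x_0)^m\|\le C(\veps)$ for all $m\ge0$.

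Let $\sigma_j,\tau_j$ be the last sites of the blocks $A_j,C_j$, set $v_j(x_0)=\binom{p_{\sigma_j}(x_0)}{p_{\sigma_j-1}(x_0)}$ for the transfer vector \eqref{3.10} at $\sigma_j$, and write $v_j=\alpha_jv_++\beta_jv_-$. Then the transfer vector at $\tau_j$ is $w_j=A_C^{2^{j^2}}v_j=\alpha_j\lambda_+^{2^{j^2}}v_++\beta_j\lambda_-^{2^{j^2}}v_-$, and $v_{j+1}=A_A^{3^{(j+1)^2}}w_j$. The mechanism to exploit is that for $x_0\notin[-1,1]$ a solution grows by the enormous factor $\lambda_+(x_0)^{2^{j^2}}$ across $C_j$ while only staying bounded across $A_j$, yet the total number of sites up to $\tau_j$ is merely $\asymp 3^{j^2}$; so $p_{\tau_j}(x_0)^2$ ought already to be a fixed fraction of $K_{\tau_j}(x_0,x_0)$ --- provided $v_j$ is not nearly parallel to the contracting direction $v_-$. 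The precise input I would need is: \emph{for a.e.\ $x_0\in I_\veps$ there is $J(x_0)<\infty$ with $|\alpha_j(x_0)|\ge c(\veps)\,2^{-j}\,\|v_j(x_0)\|$ for all $j\ge J(x_0)$.}

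Granting this, the remainder is bookkeeping: $|\alpha_j|\ge c(\veps)2^{-j}\|v_j\|\gg|\beta_j|\lambda_-^{2^{j^2}}$ for $j$ large (as $2^{-j}\lambda_+^{2\cdot2^{j^2}}\to\infty$), so $\|w_j\|\asymp_\veps|\alpha_j|\lambda_+^{2^{j^2}}$; also $\|v_j\|\asymp_\veps\|w_{j-1}\|$, an induction makes $\|w_j\|$ eventually increasing, and $\max_{0\le k\le\sigma_j}|p_k(x_0)|\lesssim_\veps\|w_{j-1}\|$. With $n=\tau_j$ (where $a_n=1/2$) one gets $|p_{n-1}(x_0)|^2+|p_n(x_0)|^2\ge\|w_j\|^2\asymp_\veps|\alpha_j|^2\lambda_+^{2\cdot2^{j^2}}$, while splitting $K_n(x_0,x_0)=\sum_{k=0}^np_k(x_0)^2$ into the contributions from $C_j$, from $A_j$, and from $\{k\le\tau_{j-1}\}$ bounds the first by a geometric sum $\lesssim_\veps|\alpha_j|^2\lambda_+^{2\cdot2^{j^2}}+2^{j^2}\|v_j\|^2$ and the last two by $\lesssim_\veps3^{j^2}\|v_j\|^2$ each. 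Because $|\alpha_j|^2\lambda_+^{2\cdot2^{j^2}}\ge c(\veps)^24^{-j}\lambda_+(1+\veps)^{2\cdot2^{j^2}}\|v_j\|^2$ while $3^{j^2}\le4^{-j}\lambda_+(1+\veps)^{2\cdot2^{j^2}}$ for $j$ large (since $2^{j^2}$ dwarfs $j^2$), all three contributions are $\lesssim_\veps|p_{n-1}|^2+|p_n|^2$; hence $(|p_{n-1}(x_0)|^2+|p_n(x_0)|^2)/K_n(x_0,x_0)\ge c'(\veps)>0$ along $n=\tau_j$, and \eqref{1.11} fails by (i)$\Leftrightarrow$(v) of Proposition~\ref{P2.1} with $c_n=p_n(x_0)^2$. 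Letting $\veps\downarrow0$ and using the symmetry then yields the theorem.

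It remains to prove the genericity input, and this is the step I expect to be the real obstacle. The idea: if $|\alpha_{j-1}(x_0)|\ge c(\veps)2^{-(j-1)}\|v_{j-1}\|$, then $w_{j-1}$ lies within angle $\lesssim_\veps2^{j}\lambda_+^{-2\cdot2^{(j-1)^2}}$ of the $v_+$-direction --- far less than $2^{-j}$ --- so, conjugating by $S(x_0)$, $v_j=A_A^{3^{j^2}}w_{j-1}$ points within a comparably tiny angle of the direction whose rotation coordinate is $3^{j^2}\theta(x_0)+\chi(x_0)$, $\chi$ being the rotation coordinate of $v_+$. Thus ``$|\alpha_j(x_0)|<c(\veps)2^{-j}\|v_j(x_0)\|$'' forces $\dist\bigl(3^{j^2}\theta(x_0)-(\psi-\chi)(x_0),0\bigr)<C(\veps)2^{-j}$ mod $\pi$, with $\psi$ the rotation coordinate of $v_-$. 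Since $x_0\mapsto3^{j^2}\theta(x_0)-(\psi-\chi)(x_0)$ is $C^1$ on $I_\veps$ with derivative $\asymp_\veps3^{j^2}$ ($\theta'$ bounded away from $0$ there, $\psi,\chi$ smooth), the resonant set $N_j$ of such $x_0$ has $|N_j|\lesssim_\veps2^{-j}$, so $\sum_j|N_j|<\infty$; by Borel--Cantelli a.e.\ $x_0$ avoids $N_j$ for all large $j$, and a bootstrap then propagates the bound $|\alpha_j|\ge c(\veps)2^{-j}\|v_j\|$ forward. The delicate points --- which I expect to take genuine work --- are to get this bootstrap started for a.e.\ $x_0$ (i.e.\ to exclude a persistent near-resonance $v_j\parallel v_-$ at \emph{every} stage; the polynomial degree of $v_j$, of order $3^{j^2}$, is far too large for naive Remez-type control of $\{|\alpha_j|\text{ small}\}$), and to check that the margin $2^{-j}$ is simultaneously wide enough to survive the $C_j$-amplification and narrow enough for Borel--Cantelli to close.
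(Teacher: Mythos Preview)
Your bookkeeping conditional on the ``genericity input'' is fine, but the gap you flag is real and, as stated, circular. Your bound $|N_j|\lesssim_\veps 2^{-j}$ is proved only under the inductive hypothesis that $w_{j-1}$ already sits within a tiny angle of $v_+$, which is precisely what $|\alpha_{j-1}|\ge c\,2^{-(j-1)}\|v_{j-1}\|$ was supposed to give. If instead you define $N_j$ unconditionally as $\{x_0:\dist(3^{j^2}\theta(x_0)-(\psi-\chi)(x_0),\pi\bbZ)<C2^{-j}\}$ and run Borel--Cantelli, you only conclude that for a.e.\ $x_0$ the bootstrap \emph{propagates once it has started}; you still owe a single index $j_0=j_0(x_0)$ with $|\alpha_{j_0}(x_0)|\ge c\,2^{-j_0}\|v_{j_0}(x_0)\|$. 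That is a sublevel-set estimate for a function built from polynomials of degree $\asymp 3^{j_0^2}$ with no exploitable structure --- exactly the Remez obstruction you note --- and an attempt to argue unconditionally that the angular derivative of $v_j$ is $\asymp 3^{j^2}\theta'$ (dominating whatever the previous stages contribute) runs into the same need for a priori control of $w_{j-1}$.

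The paper avoids this entirely by a different mechanism, borrowed from Jitomirskaya--Last. Instead of lower-bounding the expanding component of $p_n$, one upper-bounds the Weyl solution $u_n^+(x_0+i0)=G_{1n}(x_0+i0)$ at the \emph{center} $n_j$ of $C_j$ and uses the Wronskian identity $a_n(u_{n+1}^+p_{n-1}-u_n^+p_n)=1$ to convert this into the desired lower bound on $p_{n_j-1}^2+p_{n_j}^2$. Decoupling $J$ at the two edges $k,\ell$ of $C_j$ (a rank-four resolvent identity) writes $G_{1n_j}$ as $-a_kG_{1k}\wti G_{k+1,n_j}-a_\ell G_{1,\ell+1}\wti G_{\ell,n_j}$; the interior factors $\wti G$ are explicit free Green's functions for the $a\equiv\tfrac12$ block and decay like $\exp(-\gamma\,2^{j^2})$. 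The key point, and the replacement for your missing genericity input, is that each boundary factor $G_{1m}(x_0+i0)$ is the boundary value of the Borel transform of a measure of total variation $\le 1$, so Loomis's weak-type bound gives $|\{x_0:|G_{1m}(x_0+i0)|>e^{\delta j}\}|\le Ce^{-\delta j}$ \emph{uniformly in $m$}. A Borel--Cantelli over the four relevant indices per $j$ then yields $|G_{1n_j}|\lesssim e^{\delta j}\exp(-\gamma\,2^{j^2})$ for all large $j$ and a.e.\ $x_0$, hence the exponential lower bound on $p$. The degree-independence of the Loomis estimate is exactly what direct analysis of $\alpha_j(x_0)$ cannot supply.
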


The intuition, which we will implement, goes as follows: For $x_0$ in
the specified set, in $A_j$ regions, solutions of the eigenfunction
equation are linear combinations of plane waves. So in such regions,
$p_n(x_0)$ hardly grows or decays. In $C_j$ regions, they are linear
combinations of growing and decaying exponentials, so usually, the
growing exponentials will win and the $p_n(x_0)$ will grow
exponentially. $C_{j+1}$ is much bigger than $C_j$ (indeed,
$\#(C_{j+1})=2\,4^j \#(C_j)$), so at the center of $C_{j+1}$,
$\abs{p_n(x_0)}^2$ will be much bigger than
$\sum_{k\in\cup_{\ell=1}^j (A_\ell\cup C_\ell)\cup A_{j+1}}
\abs{p_k(x_0)}^2$ and comparable to $\sum_{\substack{k\in C_{j+1} \\
k\leq n-1}} \abs{p_k(x_0)}^2$, which will prevent \eqref{1.11} from
holding.

We will say more about the general strategy  shortly, but we first implement the initial step:

\begin{proposition}\lb{P4.3} Suppose for a given $x_0$, there are $C,D$ and $\alpha,\beta >0$ so that
\begin{SL}
\item[{\rm{(i)}}] For all $j$ and $n\in C_j \cup A_{j+1}$,
\begin{equation} \lb{4.3}
\abs{p_n(x_0)} \leq C^{j+1} \exp (\alpha 2^{j^2})
\end{equation}
\item[{\rm{(ii)}}] For all large $j$ and $n_j$, the ``center'' of $C_j$,
we have
\begin{equation} \lb{4.4}
(\abs{p_{n_j-1}(x_0)}^2 + \abs{p_{n_j}(x_0)}^2)^{1/2} \geq D^{-(j+1)} \exp (\beta 2^{j^2})
\end{equation}
\end{SL}
Then \eqref{1.11} {\rm{(}}and so the Nevai condition{\rm{)}} fails at $x_0$.
\end{proposition}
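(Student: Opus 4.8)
The plan is to prove that \eqref{1.11} fails along the sequence $n=n_j$; since \eqref{1.13a} holds for this Jacobi matrix (with $A_-=\f12$ and $A_+=1$), Theorem~\ref{T1.2} then gives that the Nevai condition fails at $x_0$. Write $S_n=\sum_{k=0}^n p_k(x_0)^2=K_n(x_0,x_0)$. By Proposition~\ref{P2.1} (equivalence of (i) and (v)), \eqref{1.11} amounts to $(p_{n-1}(x_0)^2+p_n(x_0)^2)/S_n\to0$, so it suffices to produce $\delta>0$ with
\beq
\f{p_{n_j-1}(x_0)^2+p_{n_j}(x_0)^2}{S_{n_j}}\geq\delta
\eeq
for all large $j$. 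Hypothesis (ii) is precisely a lower bound on the numerator, so the real task is to bound $S_{n_j}$ from above by a fixed multiple of $p_{n_j}(x_0)^2+p_{n_j-1}(x_0)^2$. Let $s_j$ be the first index of $C_j$ and split $S_{n_j}=S'+S''$, where $S'=\sum_{k<s_j}p_k(x_0)^2$ runs over $A_1\cup C_1\cup\dots\cup C_{j-1}\cup A_j$ and $S''=\sum_{k=s_j}^{n_j}p_k(x_0)^2$ runs over the first half of $C_j$.

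First I would dispose of $S'$. Applying (i) with the index $j-1$ gives $\abs{p_n(x_0)}\leq C^{\,j}\exp(\alpha 2^{(j-1)^2})$ for $n\in C_{j-1}\cup A_j$, and the earlier blocks obey even smaller bounds, so, since there are at most $4^{j^2}$ indices before $C_j$ (for $j$ large), $S'\leq E^{\,j^2}\exp(2\alpha 2^{(j-1)^2})$ for a suitable constant $E$. Here is the scale separation I will use repeatedly: $2^{j^2}=2^{2j-1}\cdot 2^{(j-1)^2}$ with $2^{2j-1}\to\infty$, so $\exp(2\beta\,2^{j^2})$ is \emph{doubly} exponential in $j$ while $E^{\,j^2}\exp(2\alpha 2^{(j-1)^2})=\exp\!\bigl(O(j^2)+2\alpha 2^{(j-1)^2}\bigr)$ is only singly exponential; hence, for $j$ large, (ii) yields
\beq
S'\leq D^{-2(j+1)}\exp(2\beta 2^{j^2})\leq p_{n_j-1}(x_0)^2+p_{n_j}(x_0)^2 .
\eeq

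The bound on $S''$ is the one genuine obstacle: a crude term-by-term estimate from (i) loses a factor $\#(C_j)=2^{j^2}$, and since (i) on $C_j$ combined with (ii) forces $\alpha\geq\beta$, that factor cannot be absorbed into the lower bound of (ii). Instead I would use the block structure of Example~\ref{E4.1}. (The hypotheses are vacuous unless $\abs{x_0}>1$: for $x_0\in[-1,1]$ the $C_j$ transfer matrix is polynomially bounded uniformly in $j$, so (i) on $A_j$ would force $\abs{p_n(x_0)}$ on $C_j$ to be at most $(\text{const})^{\,j}\,\text{poly}(2^{j^2})\exp(\alpha 2^{(j-1)^2})$, contradicting (ii); so assume $\abs{x_0}>1$.) On $C_j$ the Jacobi parameters are constant, so $p_n(x_0)=A\lambda_+^{\,n-s_j}+B\lambda_-^{\,n-s_j}$ throughout $C_j$, where $\lambda_\pm=x_0\pm\sqrt{x_0^2-1}$ are the eigenvalues of the $C_j$ transfer matrix, with $\abs{\lambda_+}>1>\abs{\lambda_-}>0$ after relabelling; here $A$ and $B$ are determined by $p_{s_j}(x_0)$ and $p_{s_j+1}(x_0)$, which (i) together with the recurrence \eqref{2.15} (to pass from the last values of $A_j$ into $C_j$) bounds by $(\text{const})^{\,j}\exp(\alpha 2^{(j-1)^2})$. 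Two elementary computations: expressing $A$ through $p_{n_j}(x_0)$ and $p_{n_j-1}(x_0)$ gives $\abs{A}\,\abs{\lambda_+}^{\,n_j-s_j}\leq C_A\bigl(\abs{p_{n_j}(x_0)}+\abs{p_{n_j-1}(x_0)}\bigr)$ with $C_A=C_A(x_0)$, while $\abs{B}\leq C_B(x_0)\,C^{\,j}\exp(\alpha 2^{(j-1)^2})$. Summing the two geometric series,
\ba
S''=\sum_{n=s_j}^{n_j}p_n(x_0)^2 &\leq 2\abs{A}^2\sum_{n=s_j}^{n_j}\abs{\lambda_+}^{2(n-s_j)}+2\abs{B}^2\sum_{n\geq s_j}\abs{\lambda_-}^{2(n-s_j)} \no\\
&\leq C_1\bigl(p_{n_j}(x_0)^2+p_{n_j-1}(x_0)^2\bigr)+F^{\,j}\exp(2\alpha 2^{(j-1)^2}) , \no
\ea
with $C_1$ depending only on $x_0$ and $F$ another constant; the point is that geometric growth makes the sum over the first half of $C_j$ comparable to its largest term, the only other contribution being that of the decaying mode, which (i) keeps small.

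Finally, combining the two estimates gives $S_{n_j}=S'+S''\leq C_1\bigl(p_{n_j}(x_0)^2+p_{n_j-1}(x_0)^2\bigr)+G^{\,j^2}\exp(2\alpha 2^{(j-1)^2})$ for a constant $G$, and the scale-separation estimate together with (ii) makes the last term at most $p_{n_j-1}(x_0)^2+p_{n_j}(x_0)^2$ for $j$ large; hence $S_{n_j}\leq(C_1+1)\bigl(p_{n_j-1}(x_0)^2+p_{n_j}(x_0)^2\bigr)$, which is the required inequality with $\delta=(C_1+1)^{-1}$, so \eqref{1.11} fails at $x_0$. Everything except the $S''$ estimate is routine bookkeeping with the doubly-versus-singly-exponential scales; the substantive input is that each $C_j$ is a constant-coefficient hyperbolic block (this is where $\abs{x_0}>1$ enters) and that a geometric sum is dominated by its largest term, so that the upper bound on $\sum_{k\leq n_j}p_k(x_0)^2$ needed to beat (ii) only feels the last few terms of $C_j$ rather than the whole block.
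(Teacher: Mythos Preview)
Your argument is correct, but it takes a quite different route from the paper's, and the difference is worth noting.

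The paper argues by contradiction: assume \eqref{1.11} holds, so for any $\veps>0$ there is $N(\veps)$ with $S_{n+1}\leq(1+\veps)S_n$ for $n\geq N(\veps)$. Iterating from $m_j$ (the index just before $C_j$) to $n_j$ gives
\[
\abs{p_{n_j-1}(x_0)}^2+\abs{p_{n_j}(x_0)}^2\leq(1+\veps)^{n_j-m_j}S_{m_j},
\]
and since $n_j-m_j=\tfrac12\,2^{j^2}$, choosing $\veps$ with $(1+\veps)\leq e^{2\beta}$ makes the right side at most $\exp(\beta\,2^{j^2})\cdot S_{m_j}$. Bounding $S_{m_j}$ via (i) then contradicts (ii) for large $j$. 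The entire $C_j$ block is handled by the single multiplicative factor $(1+\veps)^{n_j-m_j}$; no internal structure of $C_j$ is used.

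Your approach instead bounds $S_{n_j}$ directly, splitting off $S''$ over the first half of $C_j$ and controlling it by the explicit eigenvalue decomposition $p_n=A\lambda_+^{\,n-s_j}+B\lambda_-^{\,n-s_j}$. This is sound (your bounds on $A\lambda_+^{\,n_j-s_j}$ and on $B$ are correct, and the geometric sum indeed collapses to the top term), and your side argument that the hypotheses are vacuous for $\abs{x_0}\leq1$ is also right. What this costs you is dependence on the constant-coefficient, hyperbolic nature of the $C_j$ blocks, plus the extra case split at $\abs{x_0}=1$; the paper's contradiction argument is indifferent to what happens inside $C_j$ and would go through verbatim for any model with the same block lengths satisfying (i) and (ii). On the other hand, your proof is constructive and yields an explicit positive lower bound $(C_1+1)^{-1}$ for the $\limsup$ of the Nevai ratio, with $C_1$ depending only on $x_0$.
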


\begin{remarks} 1. In fact, the proof shows that \eqref{4.4} must only hold for infinitely many $j$'s.

\smallskip
2. Since $\#(C_j)$ is even, it does not have a strict center. By ``center,'' we mean one half unit prior to
the midpoint.
\end{remarks}

\begin{proof} Suppose \eqref{1.11} holds. Then, since \eqref{1.13} holds, for any $\veps>0$, there exists
$N(\veps)$, so for $n\geq N(\veps)$,
\begin{equation} \lb{4.5}
\abs{p_{n+1}(x_0)}^2 \leq \veps \sum_{k=1}^n\, \abs{p_k(x_0)}^2
\end{equation}
so
\begin{equation} \lb{4.6}
\sum_{k=1}^{n+1}\, \abs{p_k(x_0)}^2 \leq (1+\veps) \sum_{k=1}^n \, \abs{p_k(x_0)}^2
\end{equation}
and thus, for $n\geq m\geq N(\veps)$,
\begin{equation} \lb{4.7}
\abs{p_{n-1}(x_0)}^2 + \abs{p_n(x_0)}^2 \leq (1+\veps)^{n-m} \sum_{k=1}^m \, \abs{p_k(x_0)}^2
\end{equation}

Now, suppose $C_j$ is such that its leftmost point, $m_j+1$, has
$m_j\geq N(\veps)$ and let $n_j$ be the center of $C_j$. Then by
\eqref{4.3},
\begin{equation} \lb{4.8}
\sum_{k=1}^{m_j}\, \abs{p_k(x_0)}^2 \leq C^{2j} \exp(2\alpha 2^{(j-1)^2}) \biggl[\, \sum_{k=1}^j
2^{k^2} + 3^{k^2}\biggr]
\end{equation}
and by \eqref{4.4},
\begin{equation} \lb{4.9}
\abs{p_{n_j-1}(x_0)}^2 + \abs{p_{n_j}(x_0)}^2 \geq D^{-2(j+1)} \exp (2\beta 2^{j^2})
\end{equation}

Pick $\veps$ so $(1+\veps)\leq e^{2\beta}$. Since $n_j-m_j=\f12 (2^{j^2})$, \eqref{4.7} says that so long as
$m\geq N(\veps)$,
\begin{equation} \lb{4.10}
D^{-2(j+1)} \exp (2\beta 2^{j^2}) \leq \exp (\beta 2^{j^2}) C^{2j}
\exp (2\alpha 2^{(j-1)^2}) (2j 3^{j^2})
\end{equation}
Since $2\beta >\beta + 4\,4^{-j}\alpha$ for $j$ large, \eqref{4.10} cannot hold for large $j$. This
contradiction implies that \eqref{4.5} cannot hold for this value of $\veps$, and thus, \eqref{1.11} fails.
\end{proof}

The upper bound, \eqref{4.3}, will be easy from transfer matrix arguments. The lower bound, \eqref{4.4},
is much more subtle. Indeed, it implies that for those $n$'s, asymptotically $\abs{p_n(x_0)}\geq n^\ell$
for all $\ell$. On the other hand, for a.e.\ $x_0$ with respect to a spectral measure, $p_n(x_0)$ is
polynomially bounded. Thus, \eqref{4.3} must fail for a dense set of $x_0$'s! Fortunately, this kind
of problem has been faced before in spectral theory contexts and we will be able to borrow a technique
from Jitomirskaya--Last \cite{JL1}. We turn first to the upper bound.

For $x_0\in (-2,2)\setminus [-1,1]$, define $\theta(x_0), \eta(x_0)$ by
\begin{equation} \lb{4.11}
2\cos\theta(x_0) =x_0 \qquad \cosh \eta(x_0)=\abs{x_0}
\end{equation}
Let $Q(x_0), R(x_0)$ be the one-unit transfer matrices for $b_n\equiv 0$, $a_n\equiv 1$ and $b_n\equiv 0$,
$a_n\equiv \f12$. Then $Q(x_0)$ has $e^{\pm i\theta_0}$ as eigenvalues and $R(x_0)$ has eigenvalues
$e^{\pm\eta(x_0)}$ if $x_0 >0$ and $-e^{\pm\eta(x_0)}$ if $x_0 <0$. It follows that for some constants
$c(x_0)$, $d(x_0)$,
\begin{align}
\norm{Q(x_0)^k} &\leq c(x_0) \lb{4.12} \\
\norm{R(x_0)^k} &\leq d(x_0) e^{k\eta(x_0)} \lb{4.13}
\end{align}
with $c,d$ bounded uniformly on compacts of $(-2,2)\setminus [-1,1]$. This lets us prove that

\begin{proposition}\lb{P4.4} For any compact subset $K$ of $(-2,2)\setminus [-1,1]$, there are constants
$C,\alpha$ so that for all $n,j$ and $x_0\in K$ with $n\in C_j \cup
A_{j+1}$, we have
\begin{equation} \lb{4.14}
\norm{T_n(x_0)} \leq C^{j+1} \exp (\alpha 2^{j^2})
\end{equation}
\end{proposition}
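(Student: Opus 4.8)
The plan is to split the transfer matrix $T_n(x_0)=A_n(x_0)\cdots A_1(x_0)$ along the block decomposition $A_1,C_1,A_2,C_2,\dots$ and estimate each block using the uniform bounds \eqref{4.12} and \eqref{4.13}. Fix $x_0\in K$. Since $b_m\equiv 0$ throughout, $a_m=1$ on the $A$-blocks, and $a_m=\f12$ on the $C$-blocks, the one-step matrix $A_m(x_0)$ equals $Q(x_0)$ when $m$ lies in some $A_\ell$ and equals $R(x_0)$ when $m$ lies in some $C_\ell$. Grouping consecutive factors that belong to a common block, for $n\in C_j\cup A_{j+1}$ the product $T_n(x_0)$ is, reading from right to left, the full block-products over $A_1,C_1,\dots,A_j$ (together with the full product over $C_j$ if $n\in A_{j+1}$), followed by one partial block-product; in all cases this involves at most $j+1$ factors of the form $Q(x_0)^k$ with $0\le k\le\#(A_\ell)$, and at most $j$ factors of the form $R(x_0)^m$ with $0\le m\le\#(C_\ell)$.

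By submultiplicativity of the operator norm together with \eqref{4.12} and \eqref{4.13}, writing $c_K=\sup_{x\in K}c(x)$, $d_K=\sup_{x\in K}d(x)$, and $\eta_K=\sup_{x\in K}\eta(x)$ --- all finite because $K$ is a compact subset of $(-2,2)\setminus[-1,1]$ on which $c,d,\eta$ are bounded (in particular $\eta_K<\infty$ since $\abs{x}<2$ on $K$) --- we obtain
\[
\norm{T_n(x_0)}\le c_K^{\,j+1}\, d_K^{\,j}\,\exp\Bigl(\eta_K\sum_{\ell=1}^{j}\#(C_\ell)\Bigr)
= c_K^{\,j+1}\, d_K^{\,j}\,\exp\Bigl(\eta_K\sum_{\ell=1}^{j}2^{\ell^2}\Bigr).
\]
Since $2^{(\ell-1)^2}/2^{\ell^2}=2^{-(2\ell-1)}\le\f12$ for $\ell\ge1$, the exponent sum is bounded by a geometric series: $\sum_{\ell=1}^{j}2^{\ell^2}\le 2\cdot 2^{j^2}$. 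Hence $\norm{T_n(x_0)}\le \max(c_K,d_K,1)^{2j+1}\exp(2\eta_K\, 2^{j^2})$, and taking $C=\max(c_K,d_K,1)^2$ and $\alpha=2\eta_K$ gives \eqref{4.14}, since $\max(c_K,d_K,1)^{2j+1}\le C^{j+1}$.

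There is no serious obstacle here: the bound is essentially bookkeeping on top of \eqref{4.12}--\eqref{4.13}. The points deserving attention are (i) that the leftover partial block at the right-hand end of the product is covered by the same estimates, which holds because \eqref{4.12} and \eqref{4.13} are stated for all powers; (ii) the correct count of at most $j+1$ $Q$-type and $j$ $R$-type block-products when $n\in C_j\cup A_{j+1}$; and (iii) the uniformity of $c,d,\eta$ over the compact $K$, which is precisely the boundedness on compacts of $(-2,2)\setminus[-1,1]$ noted just after \eqref{4.13}.
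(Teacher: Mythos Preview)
Your proof is correct and follows essentially the same approach as the paper: factor $T_n$ into block-products of powers of $Q$ and $R$, bound each using \eqref{4.12}--\eqref{4.13}, count at most $j+1$ $Q$-type and $j$ $R$-type blocks, and control the exponent via $\sum_{\ell=1}^j 2^{\ell^2}\le 2\cdot 2^{j^2}$. Your write-up is in fact a bit more careful than the paper's about the partial final block and the explicit choice of constants.
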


\begin{remark} Since $p_n(x_0)$ is a matrix element of $T_n$, we immediately have \eqref{4.3}.
\end{remark}

\begin{proof} For $n\in A_{j+1}$, $T_n(x_0)$ is a product of $(j+1)$
factors of products of $Q(x_0)$ and
$j$ factors of products of $R(x_0)$. Thus, by \eqref{4.12} and \eqref{4.13},
\begin{equation} \lb{4.15}
\norm{T(x_0)} \leq c(x_0)^{j+1} d(x_0)^j \exp \biggl(\eta(x_0) \sum_{\ell=1}^j 2^{\ell^2}\biggr)
\end{equation}
For $n\in C_j$, the estimate is similar, but $c(x_0)^{j+1}$ is replaced by $c(x_0)^j$ and $\sum_{\ell=1}^j
2^{\ell^2}$ by a smaller sum.

Since $c$, $d$, and $\eta$ are bounded on $K$ and
\begin{equation} \lb{4.16}
\sum_{\ell=1}^j 2^{\ell^2} \leq 2^{\ell^2} [1+\tfrac12 + \tfrac14 + \dots ] = 2\, 2^{\ell^2}
\end{equation}
we obtain \eqref{4.14}.
\end{proof}

To get the lower bound following Jitomirskaya--Last \cite{JL1}, we need to consider Weyl solutions and
Green's functions. For $\Ima z>0$, there is a unique solution $u_n^+(z)$ of
\begin{equation} \lb{4.17}
a_n u_{n+1} + (b_n-z) u_n + a_{n-1} u_{n-1} =0
\end{equation}
defined for $n=1,2,\dots$ (with $a_0\equiv 1$) which is $\ell^2$ at infinity and normalized by
\begin{equation} \lb{4.18}
u_0^+(z)=-1
\end{equation}
This is the Weyl solution. The spectral theorist's Green's function (different from the Green's function
of potential theory!) is defined for $n,m\geq 1$ by
\begin{equation} \lb{4.19}
G_{nm}(z) =\jap{\delta_n, (J-z)^{-1}\delta_m}
\end{equation}
Then $G_{nm}=G_{mn}$ and, for $1\leq n\leq m$,
\begin{equation} \lb{4.20}
G_{nm}(z)=p_{n-1}(z) u_m^+ (z)
\end{equation}

As usual, the Wronskian is constant and, by $a_0\equiv 1$ and \eqref{4.18} plus $p_{-1}=0$, $p_0=1$, this
constant is $1$. So for all $n=0,1,2,\dots$,
\begin{equation} \lb{4.21}
a_n (u_{n+1}^+(z) p_{n-1}(z) - u_n^+(z) p_n(z)) =1
\end{equation}

By \eqref{4.19}, $G_{nm}$ is the Borel transform of a signed measure, and so for Lebesgue a.e.\ $x_0$,
it has boundary values $G_{nm}(x_0 +i0)$. In particular, since $p_0=1$, $u_n^+ =G_{1n}$ has a.e.\
boundary values, $u_n^+ (x_0 +i0)$. \eqref{4.20} and \eqref{4.21} still hold for $z=x_0 +i0$.

In particular, for our example where $a_n$ is $1$ or $\f12$, \eqref{4.21} and the Schwarz inequality
imply that
\begin{equation} \lb{4.22}
(p_n(x_0)^2 + p_{n-1}(x_0)^2) \geq (\abs{u_{n+1}^+ (x_0 + i0)}^2 + \abs{u_n^+ (x_0+i0)}^2)^{-1}
\end{equation}
So to get an exponentially growing lower bound on $p_n(x_0)^2 + p_{n-1}(x_0)^2$, we only need to get an
exponentially decaying upper bound on $\abs{u_n^+ (x_0+i0)}$ and $\abs{u_{n+1}^+ (x_0 + i0)}$.

Now fix $1<k<\ell<\infty$ and define $\ti J$ to be the Jacobi matrix
obtained by replacing $a_k$ and $a_\ell$ by $0$. Thus, under
\begin{align}
\ell^2 (\{1,2,\dots\}) &=\ell^2 (\{1, \dots, k\}) \oplus \ell^2 (\{k+1, \dots, \ell\}) \notag \\
& \qquad \qquad \oplus \ell^2 (\{\ell+1, \ell+2, \dots,\}) \lb{4.23} \\
\ti J &=J_\Lt \oplus J_\Md \oplus J_\Rt \lb{4.24}
\end{align}
($\Lt,\Md,\Rt$ for left, middle, right).

\begin{proposition}\lb{P4.5} Let $\wti G$ be the Green's function for $\ti J$. Let
\begin{equation} \lb{4.25}
k+1\leq n\leq \ell
\end{equation}
Then
\begin{equation} \lb{4.26}
G_{1n} = -a_k G_{1k} \wti G_{k+1\, n} - a_\ell G_{1\, \ell+1} \wti
G_{\ell n}
\end{equation}
\end{proposition}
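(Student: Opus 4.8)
The plan is to derive the resolvent identity relating $J$ and $\wti J$ and then read off the $(1,n)$ matrix element. Recall that $\wti J$ differs from $J$ only in that the entries $a_k$ (coupling sites $k$ and $k+1$) and $a_\ell$ (coupling sites $\ell$ and $\ell+1$) have been set to zero. Thus $J - \wti J$ is a finite-rank operator supported on the four sites $\{k, k+1, \ell, \ell+1\}$; explicitly, $J-\wti J = a_k(\ket{\delta_k}\bra{\delta_{k+1}} + \ket{\delta_{k+1}}\bra{\delta_k}) + a_\ell(\ket{\delta_\ell}\bra{\delta_{\ell+1}} + \ket{\delta_{\ell+1}}\bra{\delta_\ell})$. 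First I would write the second resolvent identity $(J-z)^{-1} = (\wti J - z)^{-1} - (\wti J - z)^{-1}(J - \wti J)(J-z)^{-1}$, take the $\langle\delta_1, \cdot\, \delta_n\rangle$ matrix element, and expand using the rank-four form of $J-\wti J$.

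The key simplification comes from the block decomposition \eqref{4.24}: since $1 \le k$ and $k+1 \le n \le \ell$, the site $1$ lies in the left block $\{1,\dots,k\}$ and the site $n$ lies in the middle block $\{k+1,\dots,\ell\}$. Therefore $\wti G_{1n} = 0$ (the resolvent of $\wti J$ is block-diagonal), which kills the leading term. Among the four cross terms produced by $J-\wti J$, the factor $\wti G_{1\,\cdot}$ vanishes unless the left index stays in $\{1,\dots,k\}$, so only the pieces with $\wti G_{1k}$ survive on the "left" side; and the factor $G_{\cdot\, n}$ pairs with the remaining middle/right indices. Being careful with which of the two terms in each symmetric pair contributes, one is left with exactly $G_{1n} = -a_k\, \wti G_{1k}\, G_{k+1\,n} - a_\ell\, \wti G_{1\,\ell+1}\, G_{\ell\, n}$. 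Then I would use symmetry $\wti G_{mn} = \wti G_{nm}$ and, crucially, the fact that within the left block $\wti G_{1k} = G_{1k}$ and $\wti G_{1\,\ell+1}$... wait — here one must instead note that $G_{1k}$ (for the full $J$) is what appears, and that $\wti G_{1k}=G^{(L)}_{1k}$ need not equal $G_{1k}$; so the correct bookkeeping is to keep $G_{1k}$, $G_{1\,\ell+1}$ as the \emph{full} Green's functions (these arise because the second factor in the resolvent identity is $(J-z)^{-1}$, not $(\wti J - z)^{-1}$) and $\wti G_{k+1\,n}$, $\wti G_{\ell n}$ as the \emph{truncated} ones. Matching indices through \eqref{4.24} then gives \eqref{4.26} verbatim.

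The main obstacle is purely combinatorial: keeping straight, among the eight scalar terms generated by the two rank-two perturbations, which vanish by the block structure and which survive, and correctly assigning "full" versus "truncated" Green's functions to each surviving factor. A clean way to avoid sign and index errors is to iterate two rank-two resolvent identities in sequence — first decouple the bond $a_k$, then the bond $a_\ell$ — or, alternatively, to verify \eqref{4.26} directly by checking that both sides satisfy the same difference equation \eqref{4.17} in $n$ on the range \eqref{4.25} with the same boundary data at $n=k+1$ and $n=\ell$, using the jump conditions that $G_{1n}$ inherits at $n=k$ and $n=\ell+1$ from the deleted bonds. Once the identity is in hand, no analytic input is needed; everything here is algebra on a fixed finite-rank perturbation, valid for $\Ima z > 0$ and extending to $z = x_0 + i0$ wherever the relevant boundary values exist.
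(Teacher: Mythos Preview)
Your overall strategy---resolvent identity plus block structure---is exactly right and matches the paper. But there is a genuine bookkeeping error that prevents your argument from producing \eqref{4.26} as written.

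You use the second resolvent identity in the form
\[
(J-z)^{-1} = (\wti J - z)^{-1} - (\wti J - z)^{-1}(J-\wti J)(J-z)^{-1},
\]
which upon taking the $(1,n)$ matrix element gives
\[
G_{1n} = \wti G_{1n} - \sum_{m,r} \wti G_{1m}\,\Gamma_{mr}\, G_{rn}.
\]
Here the \emph{truncated} factor $\wti G_{1m}$ sits on the left and the \emph{full} factor $G_{rn}$ on the right---the opposite of what \eqref{4.26} requires. With this ordering, since $1$ lies in the left block, $\wti G_{1m}$ vanishes for $m\in\{k+1,\ell,\ell+1\}$ and \emph{only} the single term $-a_k\,\wti G_{1k}\, G_{k+1\,n}$ survives (your second term $-a_\ell\,\wti G_{1\,\ell+1}\, G_{\ell n}$ is zero because $\wti G_{1\,\ell+1}=0$). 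This one-term identity is correct but is not \eqref{4.26}; in particular it has the full $G$ attached to the middle-block index and the truncated $\wti G$ attached to site~$1$, the reverse of what the subsequent argument in the paper needs (exponential decay from Proposition~\ref{P4.6} for the $\wti G$ factors on the middle block, Loomis-type bounds from Proposition~\ref{P4.7} for the full $G_{1\,\cdot}$ factors).

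The fix is simply to use the resolvent identity in the other order, as the paper does:
\[
(J-z)^{-1} = (\wti J - z)^{-1} - (J-z)^{-1}(J-\wti J)(\wti J - z)^{-1},
\]
giving $G_{1n} = \wti G_{1n} - \sum_{m,r} G_{1m}\,\Gamma_{mr}\,\wti G_{rn}$. Now the truncated factor is $\wti G_{rn}$ with $n$ in the middle block, so $\wti G_{kn}=\wti G_{\ell+1\,n}=0$ kill two of the four terms and the surviving two are exactly $-a_k G_{1k}\wti G_{k+1\,n} - a_\ell G_{1\,\ell+1}\wti G_{\ell n}$, which is \eqref{4.26}. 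Your ``wait'' paragraph correctly identifies the target structure but misattributes it to your own identity; the point is that the two orderings of the resolvent identity are both valid but yield genuinely different decompositions, and only one of them gives \eqref{4.26}.
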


\begin{proof} Define
\begin{equation} \lb{4.27}
\Gamma = J-\ti J
\end{equation}
Then
\begin{equation} \lb{4.28}
(J-z)^{-1} = (\ti J-z)^{-1} - (J-z)^{-1} \Gamma (\ti J-z)^{-1}
\end{equation}
so
\begin{equation} \lb{4.29}
G_{1n} = \wti G_{1n} -\sum_{m,r} G_{1m} \Gamma_{mr} \wti G_{rn}
\end{equation}

Since $1$ and $n$ lie in different blocks in the direct sum
\eqref{4.24}, $\wti G_{1n}=0$. $\Gamma$ is a rank four operator;
since $\wti G_{kn}=\wti G_{\ell+1\, n}=0$, two terms in the sum in
\eqref{4.29} vanish. The result is \eqref{4.26}.
\end{proof}

\begin{proposition} \lb{P4.6} Let $J^{(k)}$ be the $k\times k$ matrix with $0$'s on the diagonal and $\f12$
in each of the two principal off-diagonals. Let $G^{(k)}(z)$ be the matrix $(J^{(k)}-z)^{-1}$. Then for
$x_0\notin [-1,1]$ and $m\leq n$,
\begin{equation} \lb{4.30}
G_{mn}^{(k)}(x_0) = \f{2(w^{-m} -w^m)(w^{-(k+1-n)} -w^{(k+1-n)})}{(w^{-1}-w)(w^{-(k+1)}-w^{(k+1)})}
\end{equation}
where $\abs{w}>1$ and solves
\begin{equation} \lb{4.31}
w^{-1} +w =2x_0
\end{equation}
In particular, for any compact $K\in (-2,2)\setminus [-1,1]$, there are $\gamma >0$ and $C$ so that for
all $x_0\in K$ and all $k=2r$,
\begin{equation} \lb{4.32}
\abs{G_{1n}^{(2r)}(x_0)} \leq Ce^{-\gamma r}
\end{equation}
for
\begin{equation} \lb{4.33}
n=r, r-1
\end{equation}
\end{proposition}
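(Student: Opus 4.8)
The plan is to recognize $J^{(k)}$ as a finite free Jacobi matrix (zero diagonal, all off-diagonal entries $\tfrac12$) and to compute its resolvent by the classical two-solution formula for Green's functions of three-term recursions. Since $\norm{J^{(k)}}\le 1$ and the eigenvalues of $J^{(k)}$ are $\cos(\pi j/(k+1))\in(-1,1)$, $j=1,\dots,k$, any $x_0\notin[-1,1]$ lies in the resolvent set, so $G^{(k)}(x_0)=(J^{(k)}-x_0)^{-1}$ is well defined and \eqref{4.30} is just an identity of rational functions of $x_0$ to be verified. Fix the column index $n$. The relation $(J^{(k)}-x_0)G^{(k)}_{\cdot\,n}=\delta_n$ reads, for $1\le m\le k$,
\[
\tfrac12 G^{(k)}_{m-1,n}-x_0 G^{(k)}_{mn}+\tfrac12 G^{(k)}_{m+1,n}=\delta_{mn},
\]
provided we adopt the convention $G^{(k)}_{0,n}=G^{(k)}_{k+1,n}=0$ (equivalently, the first and last rows of $J^{(k)}$ have only one off-diagonal entry). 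Thus $m\mapsto G^{(k)}_{mn}$ solves the homogeneous recursion $u_{m+1}+u_{m-1}=2x_0 u_m$ off the diagonal, with Dirichlet conditions at $0$ and at $k+1$ and a unit jump at $m=n$.

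Next I would write down the two distinguished solutions. Because $x_0\notin[-1,1]$, the equation $w+w^{-1}=2x_0$ has two real roots $w,w^{-1}$ with $\abs w>1$, so $\{w^m,w^{-m}\}$ is a basis of solutions of the recursion. Put $s_m=w^{-m}-w^m$ (which vanishes at $m=0$) and $t_m=w^{-(k+1-m)}-w^{(k+1-m)}$ (which vanishes at $m=k+1$). Then $G^{(k)}_{mn}=A\,s_m$ for $m\le n$ and $G^{(k)}_{mn}=B\,t_m$ for $m\ge n$; imposing continuity at $m=n$ and the unit jump there, and using the recursion for $s$ to eliminate $s_{n-1}$, gives $A=2t_n/\mathcal W$ and $B=2s_n/\mathcal W$, whence
\[
G^{(k)}_{mn}=\f{2\,s_{\min(m,n)}\,t_{\max(m,n)}}{\mathcal W},
\]
where $\mathcal W:=s_m t_{m+1}-s_{m+1}t_m$ is independent of $m$. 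Evaluating $\mathcal W$ at $m=0$ yields $\mathcal W=-(w^{-1}-w)(w^{-(k+1)}-w^{(k+1)})$, and substituting $s_m$ and $t_n$ reproduces \eqref{4.30} (after reconciling the overall sign convention of the statement). The one delicate point is the bookkeeping of the sign and of the constant $2$ coming from the jump, together with the Wronskian evaluation; once $\mathcal W$ is pinned down, the rest is pure algebra.

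For the decay bound, specialize \eqref{4.30} to $m=1$: then $s_1=w^{-1}-w$ cancels the factor $w^{-1}-w$ in $\mathcal W$, leaving
\[
\abs{G^{(2r)}_{1n}(x_0)}=\f{2\,\bigl|\,w^{-(2r+1-n)}-w^{(2r+1-n)}\,\bigr|}{\bigl|\,w^{-(2r+1)}-w^{(2r+1)}\,\bigr|}.
\]
Write $\abs w=e^{\mu(x_0)}$, $\mu>0$. For $n=r$ and $n=r-1$ the exponent $2r+1-n$ equals $r+1$ and $r+2$, so the numerator is $\le C_0 e^{\mu(r+2)}$, while the denominator equals $e^{\mu(2r+1)}-e^{-\mu(2r+1)}\ge(1-e^{-2\mu})e^{\mu(2r+1)}$. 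Since $\mu(x_0)=\log\abs{w(x_0)}$ is continuous and strictly positive on $(-2,2)\setminus[-1,1]$, on a compact subset $K$ it is bounded below by some $\gamma>0$ and above by some $M<\infty$; hence the denominator is $\ge(1-e^{-2\gamma})e^{\mu(2r+1)}$ uniformly, and
\[
\abs{G^{(2r)}_{1n}(x_0)}\le\f{C_0}{1-e^{-2\gamma}}\,e^{\mu(r+2)-\mu(2r+1)}=\f{C_0 e^{\mu}}{1-e^{-2\gamma}}\,e^{-\mu r}\le C\,e^{-\gamma r},
\]
which is \eqref{4.32}--\eqref{4.33}. I do not anticipate a genuine obstacle here: this is a finite-dimensional resolvent computation, and the only step where care is required --- and the one I would verify most carefully --- is fixing the exact sign and normalization so that \eqref{4.30} comes out as stated; the cancellation at $m=1$ and the exponential estimate are then routine.
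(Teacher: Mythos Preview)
Your argument is correct and is essentially the paper's own proof, carried out in more detail: both identify $s_m=w^{-m}-w^m$ and $t_m=w^{-(k+1-m)}-w^{(k+1-m)}$ as the solutions satisfying the two Dirichlet conditions, form the Green's function as their product over the Wronskian, and read off the exponential decay from the dominant powers of $w$. Your observation about the sign is in fact well founded --- a direct check (e.g.\ $k=1$, where $G^{(1)}_{11}=-1/x_0$ but \eqref{4.30} gives $+1/x_0$) shows the stated formula is off by an overall sign, which of course is irrelevant for the bound \eqref{4.32}.
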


\begin{proof} $w^n$ and $w^{-n}$ solve
\begin{equation} \lb{4.36}
\tfrac12\, (u_{n+1}+u_{n-1}) = x_0u_n
\end{equation}
so $w^{-n}-w^n$ solves \eqref{4.36} with $u_0=0$ boundary condition,
while $w^{-(k+1-n)} -w^{(k+1-n)}$ solves it with $u_{k+1}=0$
boundary condition. The numerator in \eqref{4.30} is twice their
product and the denominator twice their Wronskian, proving
\eqref{4.30}. \eqref{4.32} follows by noting that the dominant term
in the numerator is $w^r$, while in the denominator, $w^{2r}$.
\end{proof}

Thus, in \eqref{4.26}, where $k+1$ and $\ell$ are taken to be the
edge of a $C_j$ block and $n$ to be the center or one less, the
$\wti G$ terms are exponentially small. We thus need estimates on
the set of $x_0$ for which $G_{1k}(x_0)$ can be large. Here we
recall Loomis's theorem:

\begin{theorem}[Loomis \cite{Loo}]\lb{T4.6} Let $\mu$ be a complex measure on $\bbR$ of
finite total variation $\norm{\mu}$. Let
\begin{equation} \lb{4.36a}
F_\mu(x) =\lim_{\veps\downarrow 0} \int \f{d\mu(y)}{y-(x+i\veps)}
\end{equation}
which exists for Lebesgue a.e.\ $x$. Then with $\abs{\dott}=$ Lebesgue measure,
\begin{equation} \lb{4.37}
\abs{\{x\mid \abs{F_\mu(x)} >M\}} \leq \f{C\norm{\mu}}{M}
\end{equation}
for a universal constant $C$.
\end{theorem}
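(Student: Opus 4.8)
The plan is to reduce the statement to the case where $\mu$ is a positive measure, and then to exploit the boundary behavior of the Poisson integral together with the Hardy--Littlewood maximal function. First I would write $\mu = \mu_1 - \mu_2 + i(\mu_3 - \mu_4)$ as a combination of four positive measures with $\norm{\mu_j}\le\norm{\mu}$, so that by subadditivity of $|F_\mu|\le\sum_j|F_{\mu_j}|$ and the trivial inclusion $\{|F_\mu|>M\}\subset\bigcup_j\{|F_{\mu_j}|>M/4\}$, it suffices to prove the bound (with a worse constant) for $\mu\ge0$. For a positive measure, $F_\mu(x+i\veps)$ has imaginary part $\pi$ times the Poisson integral $P_\veps * \mu$ (up to the normalization of the kernel), which is always positive; thus $|F_\mu(x+i\veps)|\ge \Ima F_\mu(x+i\veps) \ge c\, (P_\veps*\mu)(x)$ for a fixed constant.

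The key step is then a weak-type $(1,1)$ estimate. For a positive measure the nontangential maximal function of the Poisson integral is controlled by the Hardy--Littlewood maximal function $M\mu(x) = \sup_{h>0} \mu((x-h,x+h))/(2h)$, and $M$ satisfies the Hardy--Littlewood weak-type inequality $|\{M\mu > \lambda\}| \le C\norm{\mu}/\lambda$. Since $|F_\mu(x)| = \lim_{\veps\downarrow0}|F_\mu(x+i\veps)|$ is dominated (a.e.) by a constant times $\sup_\veps (P_\veps*\mu)(x) \le C' M\mu(x)$, the set $\{|F_\mu| > M\}$ is contained, up to a null set, in $\{M\mu > M/C'\}$, whose measure is at most $C''\norm{\mu}/M$. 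Tracking constants through the four-measure decomposition gives \eqref{4.37} with a universal $C$.

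Alternatively — and this may be the cleaner route since it gives the sharp constant — one can use the fact that $F_\mu$ is (a constant multiple of) a boundary value of a function in weak-$H^1$: for $\mu\ge 0$, $\Ima F_\mu$ extends to a positive harmonic function on the upper half-plane, hence equals the Poisson integral of a positive measure of mass $\pi\norm{\mu}$, and the distribution function of its boundary values is estimated directly by a covering argument (Vitali) exactly as in the proof of the weak $(1,1)$ bound for $M$. The main obstacle is purely bookkeeping: handling the complex (signed) case and the two components of $F_\mu$ (real and imaginary parts) so that one genuinely lands on a maximal-function bound; the Hardy--Littlewood inequality itself is standard. I would present the positive-measure case in full and dispatch the general case by the decomposition above.
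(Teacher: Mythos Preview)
The paper does not prove Theorem~\ref{T4.6}; it is quoted as a classical result of Loomis (with historical remarks on Kolmogorov, Davis, and Boole), so there is no ``paper's proof'' to compare against. I therefore assess your argument on its own merits.

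There is a genuine gap in your main line. You correctly observe that for $\mu\ge 0$ the imaginary part of $F_\mu(x+i\veps)$ is $\pi(P_\veps*\mu)(x)\ge 0$, and that this gives the \emph{lower} bound $|F_\mu(x+i\veps)|\ge \Ima F_\mu(x+i\veps)$. But you then assert the \emph{upper} bound
\[
|F_\mu(x)|\;\le\; C'\,\sup_{\veps>0}(P_\veps*\mu)(x)\;\le\; C' M\mu(x),
\]
and this is false. The real part of $F_\mu(x+i\veps)$ is the conjugate Poisson integral $\int\frac{(y-x)}{(y-x)^2+\veps^2}\,d\mu(y)$, i.e.\ essentially the (smoothed) Hilbert transform, and it is \emph{not} pointwise dominated by the Hardy--Littlewood maximal function. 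Indeed, if such a pointwise bound held, the weak-type $(1,1)$ inequality for the Hilbert transform would be an immediate corollary of the maximal theorem; the whole content of the Kolmogorov/Loomis result is precisely that the conjugate function satisfies weak $(1,1)$ despite the absence of any such domination. So the step ``$|F_\mu|\le C' M\mu$'' collapses the very difficulty the theorem addresses.

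Your ``alternative'' paragraph is closer in spirit, but still not a proof as stated. Knowing that $\Ima F_\mu$ extends to a positive harmonic function does not by itself yield a distribution estimate for $|F_\mu|$ via a Vitali covering argument ``exactly as for $M$''; the classical arguments (Kolmogorov's $|F|^p$ with $p<1$ subharmonic, or Loomis's direct analysis of the level sets of the Cauchy transform of a positive measure, or a Calder\'on--Zygmund decomposition) all use the analytic/Herglotz structure in an essential way beyond a covering lemma. If you want a self-contained proof, the cleanest route for $\mu\ge 0$ is Boole's identity (equality in \eqref{4.37} with $C=2$ for finitely supported $\mu$, then a limiting argument), as the paper's Remark~2 after Theorem~\ref{T4.6} hints; the complex case then follows by your four-measure decomposition.
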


\begin{remarks} 1. The history is complicated and is partly described in Loomis \cite{Loo}. The result for
measures $\mu$ which are absolutely continuous is due earlier to Kolmogorov.

\smallskip
2. For $d\mu= f(x)\, dx$ with $f\in L^1$, the optimal constant was found by Davis \cite{Dav}. For purely
singular positive measures, the result is essentially due to Boole \cite{Boole} with constant $C=2$, which is
optimal---indeed, one has equality.

\smallskip
3. In \cite{JL1}, they only used Boole's equality since their
measure is purely singular. That is true here also, but not in the
next section. In any event, it is useful to know pure singularity is
not needed a priori, although it follows on $[-2,2]\setminus [-1,1]$
from the estimates here.
\end{remarks}

\begin{proposition} \lb{P4.7} Let $n_1 < n_2 < \dots$ be an arbitrary sequence of indices. Let $\delta >0$.
Then for a.e.\ $x_0\in\bbR$, $\exists J(x_0)$ so that
\begin{equation} \lb{4.38}
j > J(x_0) \Rightarrow \abs{G_{1n_j}(x_0+i0)} \leq e^{\delta j}
\end{equation}
\end{proposition}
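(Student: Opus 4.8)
The plan is to combine Loomis's theorem (Theorem~\ref{T4.6}) with a Borel--Cantelli argument applied to the sequence $G_{1n_1}, G_{1n_2}, \dots$. First I would record that, by the spectral theorem for the self-adjoint Jacobi matrix $J$, each Green's function $G_{1n}(z)=\jap{\delta_1,(J-z)^{-1}\delta_n}$ is the Borel transform of the finite signed measure $\mu_{1n}(\dott)=\jap{\delta_1,E_J(\dott)\delta_n}$, where $E_J$ is the spectral resolution of $J$; thus $G_{1n}=F_{\mu_{1n}}$ in the notation of \eqref{4.36a}, and $G_{1n}(x_0+i0)$ exists for a.e.\ $x_0$. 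The crucial point is the bound $\norm{\mu_{1n}}\leq 1$, \emph{uniformly in $n$}: for any finite Borel partition $\{S_i\}$ of $\bbR$, since each $E_J(S_i)$ is an orthogonal projection and the ranges of the $E_J(S_i)$ are mutually orthogonal,
\[
\sum_i \abs{\mu_{1n}(S_i)} = \sum_i \abs{\jap{E_J(S_i)\delta_1, E_J(S_i)\delta_n}} \leq \biggl(\sum_i \norm{E_J(S_i)\delta_1}^2\biggr)^{1/2} \biggl(\sum_i \norm{E_J(S_i)\delta_n}^2\biggr)^{1/2} = \norm{\delta_1}\,\norm{\delta_n} = 1,
\]
using the Cauchy--Schwarz inequality and $E_J(\bbR)=I$. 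Taking the supremum over partitions gives $\norm{\mu_{1n}}\le 1$.

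Next, I would apply Theorem~\ref{T4.6} to $\mu=\mu_{1n_j}$ with $M=e^{\delta j}$, obtaining
\[
\abs{\{x_0\in\bbR \mid \abs{G_{1n_j}(x_0+i0)}>e^{\delta j}\}} \leq C\,\norm{\mu_{1n_j}}\, e^{-\delta j} \leq C e^{-\delta j},
\]
with $C$ the universal Loomis constant. Hence $\sum_{j=1}^\infty \abs{\{x_0 \mid \abs{G_{1n_j}(x_0+i0)}>e^{\delta j}\}} \leq C\sum_{j=1}^\infty e^{-\delta j}<\infty$. The Borel--Cantelli lemma (which needs only summability of the measures of these sets, not finiteness of the ambient measure, so it applies to Lebesgue measure on $\bbR$) then shows that the set of $x_0$ lying in infinitely many of these sets is Lebesgue-null. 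For every $x_0$ outside that null set — and outside the further null set where one of the boundary values fails to exist — there is a $J(x_0)$ such that $j>J(x_0)$ forces $\abs{G_{1n_j}(x_0+i0)}\leq e^{\delta j}$, which is exactly \eqref{4.38}.

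I do not anticipate a genuine obstacle; the only step requiring care is the uniform estimate $\norm{\mu_{1n}}\leq 1$, which is precisely what allows Loomis's weak-type inequality to be invoked with the \emph{same} constant for all $j$, so that the geometric series $\sum_j e^{-\delta j}$ converges and Borel--Cantelli bites. (By Remark~3 following Theorem~\ref{T4.6} one could instead use Boole's equality, knowing that the relevant measures are purely singular on the region of interest; but the signed, resp.\ complex, form of Loomis's theorem suffices and avoids having to establish singularity a priori.)
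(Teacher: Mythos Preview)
Your proof is correct and follows exactly the paper's approach: apply Loomis's weak-type bound (Theorem~\ref{T4.6}) to each $G_{1n_j}$ with threshold $e^{\delta j}$, then use Borel--Cantelli on the summable series $\sum_j Ce^{-\delta j}$. The only difference is that the paper simply asserts that $G_{1n_j}$ is the Borel transform of ``a measure of variation at most $1$,'' whereas you supply the explicit verification $\norm{\mu_{1n}}\leq 1$ via the spectral resolution and Cauchy--Schwarz; this fills in a detail the paper leaves to the reader.
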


\begin{proof} This is a standard Borel--Cantelli argument. Let $\chi_j$ be the characteristic function of
$\{x_0\mid\abs{G_{1n_j}(x_0)} > e^{\delta j}\}$. Since $G_{1n_j}(x_0+i0)$ is of the form \eqref{4.36a}
for a measure of variation at most $1$,
\begin{equation} \lb{4.39}
\int \chi_j (x)\, dx \leq Ce^{-\delta j}
\end{equation}
Thus,
\begin{equation} \lb{4.40}
\int \sum_{j=1}^\infty \chi_j(x)\, dx <\infty
\end{equation}
which implies that for a.e.\ $x_0$, $\sum_{j=1}^\infty \chi_j(x_0) <\infty$. Since each $\chi_j(x_0)$ is
$0$ or $1$, only finitely many are nonzero, that is, for all large $j$,
\eqref{4.38} holds.
\end{proof}

\begin{proof}[Proof of Theorem~\ref{T4.2}] By Propositions~\ref{P4.3} and \ref{P4.4},
we only need to prove that \eqref{4.4} holds for
Lebesgue a.e.\ $x_0$. By \eqref{4.22}, it suffices to prove
exponentially decaying upper bounds on $u_{n_j}^+, u_{n_j+1}^+$ (for
the same $n_j$ as \eqref{4.4}). As noted, $u_n^+ =G_{1n}$, so it
suffices to prove exponentially decaying upper bounds on $G_{1\,
n_j}, G_{1\, n_j+1}$.

We use \eqref{4.26} with $k,\ell$ the lower and upper edge of the $C_j$ block. By Proposition~\ref{P4.6},
\begin{equation} \lb{4.38x}
\abs{\wti G_{k+1\, n_j}} + \abs{\wti G_{k+1\, n_j+1}} + \abs{\wti
G_{\ell\, n_j}} + \abs{\wti G_{\ell\, n_j+1}} \leq C\exp (-\gamma
2^{j^2})
\end{equation}
for some $\gamma >0$.

Thus, the result follows from Proposition~\ref{P4.7}, for eventually
each of the complementary Green's functions in \eqref{4.26} is
bounded by $\exp(\f12\gamma j)\leq\exp(\f12\gamma 2^{j^2})$ and
\eqref{4.4} holds with $\beta = \f12 \gamma$.
\end{proof}

Finally, we want to note that $J$ has a two-sided right limit which
has $a_n=\f12$ for $n\leq -1$ and $a_n=1$ for $n\geq 0$. There is no
set of positive Lebesgue measure on which $J$ is reflectionless, so,
by a theorem of Remling \cite{Rem}, $J$ has purely singular
spectrum.

\section{A Regular Measure With Some A.C.\ Spectrum} \lb{s5}

As we noted at the end of the last section, Example~\ref{E4.1} has no a.c.\ spectrum. Of course, if
Conjecture~\ref{Con1.4} is true, then Lebesgue a.e.\ on the a.c.\ spectrum, the Nevai condition holds.
So an example like Example~\ref{E4.1} cannot have a.c.\ spectrum on $[-2,2]\setminus [-1,1]$ but it
can on $[-1,1]$. The example in this section shows that a.c.\ spectrum is indeed possible on $[-1,1]$.

\begin{example}\lb{E5.1} Let $J$ be a Jacobi matrix with $b_n\equiv 0$ and $a_n$ described as follows:
Partition $\{1,2,\dots\}$ into successive blocks $A_1,B_1,C_1,D_1,A_2,B_2, \dots$, where
\begin{equation} \lb{5.1}
\#(A_j)=3^{j^2} \quad \#(C_j) =2^{j^2} \quad \#(B_j)=\#(D_j)=j^6 -1
\end{equation}
On $A_j$, $a_n\equiv 1$, on $C_j$, $a_n\equiv\f12$, and on $B_j$ and
$D_j$, $\log(a_n^2)$ linearly interpolates from $\log(\f14)$ to $\log(1)$,
that is, for $n\in B_j$,
\begin{equation} \lb{5.2}
\f{a_n^2}{a_{n-1}^2} = c_j
\end{equation}
and for $n\in D_j$,
\begin{equation} \lb{5.3}
\f{a_{n-1}^2}{a_n^2} = c_j
\end{equation}
where
\begin{equation} \lb{5.3a}
c_j^{j^6} =\tfrac14
\end{equation}
so that
\begin{equation} \lb{5.3b}
1-c_j = kj^{-6} + o(j^{-6})
\end{equation}
for a suitable nonzero constant $k$. In particular,
\begin{equation} \lb{5.3c}
\abs{1-c_j} \leq E_0 j^{-6}
\end{equation}
for some $E_0$.
\end{example}

As in Example~\ref{E4.1}, this $J$ is regular with spectrum $[-2,2]$. We will prove that

\begin{theorem}\lb{T5.2} On $[-2,2]\setminus [-1,1]$, $J$ has purely singular spectrum and for Lebesgue
a.e.\ $x_0$ in this set, the Nevai condition fails. On $(-1,1)$, $J$
has purely a.c.\ spectrum and for all $x_0\in (-1,1)$, the Nevai
condition holds uniformly on compact subsets.
\end{theorem}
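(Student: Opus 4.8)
The plan is to split the proof of Theorem~\ref{T5.2} into the region $[-2,2]\setminus[-1,1]$, where the structure is essentially that of Section~\ref{s4}, and the genuinely new region $(-1,1)$, where we must produce a.c.\ spectrum \emph{and} verify the Nevai condition uniformly on compacts. For the exterior region $[-2,2]\setminus[-1,1]$, I would rerun the argument for Theorem~\ref{T4.2} almost verbatim. The point is that for $x_0\notin[-1,1]$, the new $B_j$ and $D_j$ interpolation blocks contribute only polynomially: on $B_j\cup D_j$ one has $\#(B_j)+\#(D_j) = 2(j^6-1)$ steps, and since $\tfrac14\le a_n^2\le 1$ on these blocks, the one-step transfer matrices there have norm bounded by a fixed constant (uniformly on compacts of $(-2,2)\setminus[-1,1]$), so the transfer-matrix bound of Proposition~\ref{P4.4} survives with the same $\alpha$ and a harmlessly enlarged $C$. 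Likewise the Green's-function estimates: the $C_j$ block is still a pure $a_n=\tfrac12$ block of length $2^{j^2}$, so Proposition~\ref{P4.6} applies unchanged to give the exponentially small $\wti G$ terms, and Proposition~\ref{P4.7} (Loomis/Borel--Cantelli) is model-independent. Hence Proposition~\ref{P4.3} again yields failure of \eqref{1.11} for a.e.\ $x_0$ in this set, and Remling's theorem combined with the existence of a nontrivial two-sided right limit (here the right limit has a $C$-type tail, hence is not reflectionless on a positive-measure set in $(-2,2)\setminus[-1,1]$) gives purely singular spectrum there.

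For the interior $(-1,1)$, the strategy is to show that the Jacobi parameters are a perturbation of the free (or constant) case that is summable in a strong enough sense to force purely a.c.\ spectrum on $(-1,1)$ and, simultaneously, subexponential growth \eqref{1.11} uniformly. The key quantitative input is \eqref{5.3c}: across each $B_j$ (resp.\ $D_j$), $\log(a_n^2)$ moves in steps of size $|\log c_j| = O(j^{-6})$, and there are $j^6-1$ such steps, and the blocks $B_j,D_j$ occur once per super-period. First I would estimate $\sum_n |a_{n+1}-a_n|$ and $\sum_n |a_n^2-a_{n-1}^2|$: the total variation contributed by $B_j\cup D_j$ is the sum of the step sizes, $O(j^6\cdot j^{-6})=O(1)$ per $j$, which does \emph{not} sum — so bounded variation is not available and we instead need the finiteness of $\sum_j (\#(B_j))(|\log c_j|)^2 = \sum_j j^6\cdot O(j^{-12}) = \sum_j O(j^{-6}) <\infty$, i.e.\ the parameters lie in an $\ell^2$-type class (square-summable variation). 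By the Denisov--Rakhmanov--type / Last--Simon results on a.c.\ spectrum under $\ell^2$ perturbations of periodic (here constant) parameters — or more directly, since the parameters are eventually constant off a sparse collection of short interpolation windows whose transfer matrices converge to identity in an $\ell^2$-controlled way — one gets $\Sigma_{\ac} \supseteq (-1,1)$ and, together with the exterior singularity result, purely a.c.\ spectrum on $(-1,1)$.

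For the uniform Nevai condition on compacts $K\subset(-1,1)$, the cleanest route is via transfer matrices: on the $A_j$ and $C_j$ blocks with $x_0\in(-1,1)$ both $Q(x_0)$ and $R(x_0)$ are elliptic (eigenvalues on the unit circle — note $x_0\in(-1,1)$ means $|x_0|=\cosh\eta$ has no real solution, so $R(x_0)=\tfrac1{1/2}\left(\begin{smallmatrix} x_0 & -1\\ 1/4 & 0\end{smallmatrix}\right)$ has eigenvalues $2x_0\pm 2i\sqrt{1-x_0^2}$ of modulus $1$), and on $B_j,D_j$ the one-step matrices are elliptic too; so $\|T_n(x_0)\|$ is bounded on compacts provided the product does not grow, which is exactly what the $\ell^2$ control on the interpolation gives (one uses an EFGP/Prüfer-variable or a telescoping argument: the log of $\|T_n\|$ is controlled by $\sum |a_n^2 - a_{n-1}^2|^2$-type quantities, finite and uniform on $K$). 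With $\sup_n \|T_n(x_0)\| \le C(K)<\infty$ we get $|p_n(x_0)|^2 + |p_{n+1}(x_0)|^2 \le C'(K)$ while $K_n(x_0,x_0)\to\infty$ (indeed $K_n(x_0,x_0)\ge c(K)n$ by a lower bound, since boundedness of the transfer matrices also bounds them below, forcing $\sum_{j\le n}|p_j(x_0)|^2 \gtrsim n$), so \eqref{1.11} holds with a rate uniform on $K$; by Theorem~\ref{T2.2} (using \eqref{1.13a}, which holds since $A_-=\tfrac12$, $A_+=1$) the Nevai condition holds uniformly on $K$.

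The main obstacle I expect is establishing the \emph{uniform} boundedness (above and below) of $\|T_n(x_0)\|$ on compacts of $(-1,1)$ through the interpolation blocks $B_j, D_j$: the naive total-variation bound fails, so one must exploit the oscillation of the elliptic transfer matrices — a WKB/Prüfer analysis showing that the growth of $\log\|T_n\|$ is governed by a square-summable (hence convergent, uniformly on $K$) series rather than the non-summable total variation. Once that boundedness is in hand, both the a.c.\ spectrum on $(-1,1)$ (via subordinacy theory — bounded solutions for all $x_0$ in an interval give a.c.\ spectrum there) and the uniform Nevai condition follow quickly. A secondary technical point is verifying that the ``center of $C_j$'' lower bound \eqref{4.4} still goes through in the exterior region despite the presence of the $B_j,D_j$ blocks between consecutive $C_j$'s — but since those blocks only inflate the pre-$C_j$ partial sums polynomially, the exponential gain inside $C_{j+1}$ still dominates, exactly as in the proof of Theorem~\ref{T4.2}.
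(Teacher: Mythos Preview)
Your overall architecture matches the paper's: rerun Section~\ref{s4} on $[-2,2]\setminus[-1,1]$, and on $(-1,1)$ prove uniform boundedness of the transfer matrix via an EFGP/Pr\"ufer analysis, from which both purely a.c.\ spectrum and the uniform Nevai condition follow by the $C/n$ bound you wrote down. Two points, one minor and one substantive.

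\emph{Minor.} For the singularity of the spectrum on $[-2,2]\setminus[-1,1]$ the paper does not invoke Remling; it uses the Last--Simon right-limit bound: $J$ has the constant two-sided matrix with $a_n\equiv\tfrac12$ as a right limit, whose a.c.\ spectrum is $[-1,1]$, so $\Sigma_\ac(J)\subset[-1,1]$. Your Remling route also works, but the Last--Simon argument is more direct here. (Also, your eigenvalues of $R(x_0)$ are off by a factor of $2$; the correct ones are $x_0\pm i\sqrt{1-x_0^2}$, which do lie on the unit circle.)

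\emph{Substantive.} Your appeal to ``Denisov--Rakhmanov--type / Last--Simon results on a.c.\ spectrum under $\ell^2$ perturbations'' is not available: the paper explicitly notes that Denisov's theorem assumes $a_n\equiv 1$, $b_n\to 0$, and that extending it to the present setting is open. So the boundedness of $T_n$ must be proved by hand, and here your sketch misses a key mechanism. In the EFGP recursion one needs $\sup_N\bigl|\sum_{n\le N} X_n\bigr|<\infty$ with $X_n=(a_{n+1}^2-a_n^2)\sin^2(\theta_n+k_n)/(a_n^2\sin^2 k_n)$. Writing $\sin^2=\tfrac12(1-\cos)$ splits $X_n=X_n^\sharp+\wti X_n$. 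The oscillating part $\wti X_n$ is indeed handled by your ``square-summable'' idea (the paper breaks each $B_j,D_j$ into $j^4$ subblocks of length $j^2$ and uses cosine cancellation, Lemma~\ref{L5.6}). But the non-oscillating part $X_n^\sharp$ is \emph{not} controlled by $\ell^2$: each $\wti B_j$ contributes a signed sum of order $1$ (since $|X_n^\sharp|\sim j^{-6}$ over $\sim j^6$ terms), and without further input the partial sums over $j$ would diverge linearly. What saves the day is the built-in symmetry of the construction: the ramp-down $\wti B_j$ and ramp-up $\wti D_j$ have opposite signs of $a_{n+1}^2-a_n^2$, and because $c_j^{-1}-c_j=O(j^{-6})$ their combined contribution cancels to $O(j^{-6})$, which is summable in $j$. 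This $B_j$--$D_j$ cancellation is the step you did not anticipate, and it is essential; a pure ``$\ell^2$-variation'' bookkeeping will not close the argument for the DC component.
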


The technical key to the new elements of this example is

\begin{theorem}\lb{T5.3} For $x_0\in (-1,1)$, let $u_n (x_0,\theta_0)$ be the solution of \eqref{4.17}
{\rm{(}}with $a_0\equiv 1$ and $z=x_0${\rm{)}} for $n=1,2,\dots$, with
\begin{equation} \lb{5.4}
u_0=\cos\theta_0 \qquad u_1 =\sin\theta_0
\end{equation}
Then for any compact set $K\subset (-1,1)$, there is a constant, $C$, so that for all $x_0\in K$\!, all
$\theta_0$ and all $n$,
\begin{equation} \lb{5.5}
\abs{u_n(x_0,\theta_0)}\leq C
\end{equation}
\end{theorem}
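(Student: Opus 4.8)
The strategy is to track the transfer matrices of $J$ restricted to the various blocks and show that on $A_j$ blocks the solution $u_n$ undergoes a bounded (essentially rotational) evolution, while on the $B_j,C_j,D_j$ blocks any amplification is controlled. First I would recall that for $x_0\in K\subset(-1,1)$ and $a_n\equiv 1$, $b_n\equiv 0$, the one-step transfer matrix $Q(x_0)$ is elliptic with eigenvalues $e^{\pm i\theta(x_0)}$, so $\norm{Q(x_0)^k}\leq c(x_0)$ uniformly on $K$; this already handles the $A_j$ blocks and shows they contribute only a bounded factor. The crux is the passage through $C_j$: there $a_n\equiv\f12$, so the one-step matrix $R(x_0)$ is \emph{hyperbolic} (eigenvalues $\pm e^{\pm\eta(x_0)}$), and a naive bound gives exponential growth $e^{\#(C_j)\eta(x_0)}$, which is useless.

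The key point that saves the estimate is that $x_0\in(-1,1)$ means $\abs{x_0}<1$, so the naive hyperbolic picture for $R(x_0)$ is wrong: with $a_n\equiv\f12$ and $\abs{x_0}<1$, the one-step matrix $\f{1}{1/2}\left(\begin{smallmatrix} x_0 & -1\\ 1/4 & 0\end{smallmatrix}\right)=\left(\begin{smallmatrix} 2x_0 & -2\\ 1/2 & 0\end{smallmatrix}\right)$ has characteristic equation $w^{-1}+w=2x_0$ with $\abs{2x_0}<2$, hence \emph{complex conjugate} eigenvalues of modulus $1$ — it is elliptic, not hyperbolic. (This is exactly consistent with the spectrum of the $C$-type Jacobi matrix being $[-1,1]$.) So the plan is: show that on $(-1,1)$ \emph{every} one-step matrix arising in $J$ — those from $A_j$, from $C_j$, and the slowly varying ones from $B_j,D_j$ where $a_n^2$ interpolates between $\f14$ and $1$ — is conjugate, by a matrix depending only on $a_n$ and $x_0$ and bounded uniformly on $K$, to a rotation. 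Concretely, for a constant-$a$ block the matrix $\f{1}{a}\left(\begin{smallmatrix} x_0 & -1\\ a^2 & 0\end{smallmatrix}\right)$ has discriminant $x_0^2-a^2$... but here $a$ can be as large as $1$, so for $x_0$ near $0$ and $a$ near $1$ this is negative (elliptic) while for $a=\f12$ it is $x_0^2-\f14$, which is elliptic only when $\abs{x_0}<\f12$. Thus the honest claim is subtler: I must use a \emph{single} conjugating matrix adapted to the value $a_n$ that diagonalizes the whole family simultaneously in the region $\abs{x_0}<1$, exploiting that $\abs{a_n}\leq 1$ throughout, and that the relevant conserved quantity (an EFGP-type Prüfer variable or the Jacobi-matrix analogue of the harmonic-oscillator energy) stays comparable to $u_n^2+u_{n+1}^2$ with constants uniform on $K$.

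Accordingly the main steps are: (1) introduce the modified Prüfer / EFGP variables $R_n,\varphi_n$ for the solution $u_n$, defined so that $u_n,u_{n+1}$ are recovered from $R_n$ and $\varphi_n$ with comparison constants controlled by $a_n,a_{n-1}$ and $x_0$; since $A_-=\f12$, $A_+=1$, and $x_0$ ranges over compact $K\subset(-1,1)$, all these constants are uniform on $K$; (2) derive the recursion for $\log R_n$, showing it is a bounded function of $\varphi_n$ times the \emph{increment} $\abs{a_n^2-a_{n-1}^2}$ (plus a term that vanishes on constant-$a$ blocks), so that $\abs{\log R_{n}-\log R_{n-1}}\leq C\abs{a_n^2-a_{n-1}^2}$; (3) sum this over all $n$: on $A_j$ and $C_j$ the increments are zero, and on $B_j$ and $D_j$ the total variation of $a_n^2$ is exactly $\f34$ per block with at most one block boundary contributing $O(1)$, so $\sum_n\abs{a_n^2-a_{n-1}^2}<\infty$ (indeed the sum over block $B_j$ or $D_j$ telescopes to $\f34$, and $\sum_j \f34\cdot 2$ diverges — so I must instead use \eqref{5.2}–\eqref{5.3b}, i.e. that the \emph{multiplicative} increments $c_j$ satisfy $\abs{1-c_j}\leq E_0 j^{-6}$, making $\sum_n\abs{\log(a_n^2/a_{n-1}^2)}=\sum_j 2 j^6\abs{\log c_j}=\sum_j 2\abs{\log\f14}<\infty$... again this diverges).

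So the genuinely correct bookkeeping — and the step I expect to be the main obstacle — is to see that the Prüfer radius is governed not by the total variation of $\log a_n^2$ but by a \emph{second difference} / oscillatory cancellation: writing the increment of $\log R_n$ as $g(\varphi_n)\,\Delta_n$ with $\Delta_n=\log(a_n^2/a_{n-1}^2)$ constant ($=\log c_j^{\pm1}$) throughout $B_j$ (resp.\ $D_j$), a summation by parts against the rapidly rotating phase $\varphi_n$ (which advances by $\approx\theta(x_0)\neq 0$ each step since $x_0\in(-1,1)$ keeps us in the elliptic regime) converts the sum over a block of length $j^6-1$ into a boundary term plus a sum of \emph{second} differences of $\Delta_n$, which vanish inside the block; the net contribution of $B_j$ is then $O(\abs{\Delta_n})=O(1)$ but, crucially, is \emph{cancelled} against the matching contribution of $D_j$ where the interpolation runs in reverse — or, more cleanly, one shows the radius returns to within a bounded factor of its pre-$B_j$ value after passing $B_j\cup C_j\cup D_j$ because the net change in $\log a_n^2$ across $B_j\cup C_j\cup D_j$ is zero. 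I would therefore organize step (3) as: across any full $B_j\cup C_j\cup D_j$ stretch the one-step matrices compose to something conjugate to a rotation with conjugating matrix bounded on $K$, uniformly in $j$, using the uniform bound $\abs{1-c_j}\le E_0 j^{-6}$ from \eqref{5.3c} to control the Prüfer summation-by-parts remainder by $\sum_j C/j^6<\infty$. Combined with the bounded contribution of every $A_j$ from step (1), a telescoping over all blocks then yields $\sup_n(u_n^2+u_{n+1}^2)\leq C$ with $C$ uniform on $K$, which gives \eqref{5.5}. The delicate point throughout is ensuring every constant — Prüfer comparison constants, the bound on $g$, the summation-by-parts remainder — is uniform for $x_0\in K$; this is where compactness of $K$ inside the open interval $(-1,1)$ (bounding $\theta(x_0)$ away from $0$ and $\pi$) is used essentially.
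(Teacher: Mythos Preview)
Your overall strategy is the paper's: introduce EFGP/Pr\"ufer variables $R_n,\theta_n$, reduce \eqref{5.5} to $\sup_N\bigl|\sum_{n\le N}X_n\bigr|<\infty$ with $X_n=(a_{n+1}^2-a_n^2)\sin^2(\theta_n+k_n)/(a_n^2\sin^2 k_n)$, and then exploit both the $B_j$/$D_j$ sign symmetry and the rotation of the phase. But there is a genuine gap in your treatment of the oscillatory part.

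You argue that on a $B_j$ block, summation by parts with $\Delta_n=\log c_j$ constant leaves only a boundary term of size $O(|\Delta_n|)$, since second differences of $\Delta_n$ vanish. The boundary term, however, is $\Delta\cdot\sum_{n\in B_j}\cos\bigl(2(\theta_n+k_n)\bigr)$, and you are tacitly assuming this cosine sum is $O(1)$ uniformly in $j$. That would follow from a geometric-series bound (Lemma~\ref{L5.6}) \emph{if} the phase increment were constant. It is not: across $B_j$ the coefficient $a_n$ runs from $1$ to $\tfrac12$, so $k_n=\arccos(x_0/2a_n)$ changes by an amount of order $1$, and the phase step $\theta_{n+1}-\theta_n\approx k_n$ drifts with it. A cosine sum of length $j^6$ with phase increment drifting by $O(1)$ has no a~priori $O(1)$ bound, so your ``remainder $\sum_j C/j^6$'' is unjustified.

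The paper closes this gap by subdividing each $B_j$ and $D_j$ into $j^4$ sub-blocks of length $j^2$. Within a sub-block $a_n$ moves by $O(j^{-4})$, hence $k_n$ and the phase increment are constant to that accuracy; one freezes them, applies Lemma~\ref{L5.6} on each sub-block, and picks up $O(j^{-6})$ per sub-block (from $|a_{n+1}^2-a_n^2|\sim j^{-6}$). Summing $2j^4$ sub-blocks gives $O(j^{-2})$ per $j$, and $\sum_j j^{-2}<\infty$. The freezing error is controlled separately and also sums to $O(j^{-2})$. The $\theta_n$-independent half of $X_n$ (from $\sin^2=\tfrac12(1-\cos)$) is handled by exactly your $B_j$/$D_j$ cancellation, which the paper also uses. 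So the outline is right, but the honest bound is $\sum_j Cj^{-2}$, and the sub-block decomposition is the missing idea needed to get it.

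A minor slip: the one-step matrix for constant $a$ has trace $x_0/a$, hence is elliptic iff $|x_0|<2a$; for $a\in[\tfrac12,1]$ and $|x_0|<1$ this always holds, so your aside ``elliptic only when $|x_0|<\tfrac12$'' is incorrect (though you proceed correctly regardless).
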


\begin{proof}[Proof of Theorem~\ref{T5.2} given Theorem~\ref{T5.3}] $J$ has as one of its right limits,
$J_r$, the two-sided matrix with $b_n\equiv 0$, $a_n\equiv \f12$ whose a.c.\ spectrum is $\Sigma_\ac(J_r)
= [-1,1]$. By a theorem of Last--Simon \cite{S263}, $\Sigma_\ac(J)\subset [-1,1]$, so $J$ has purely
singular spectrum on $[-2,2]\setminus [-1,1]$. The results on this set for the Nevai condition follow
the arguments in Section~\ref{s4} without change.

Theorem~\ref{T5.3} implies that the transfer matrix $T_n(x_0)$ is
uniformly bounded in $n$ and $x_0\in K \subset (-1,1)$ compact.
Carmona's formula (see, e.g., \cite[Thm.~10.7.5]{OPUC2}; also
\cite{Carm,S263,Si253,Rev47}) then implies the spectrum is purely
a.c.\ on $(-1,1)$.

A bounded transfer matrix also implies $p_n(x_0)^2$ bounded above, and given constancy of the Wronskian
(i.e., $\det(T_n)=1$), uniform lower bounds on $p_n(x_0)^2 + p_{n+1}(x_0)^2$. Thus, on $(-1,1)$,
\begin{equation} \lb{5.6x}
\f{p_n(x_0)^2}{\sum_{j=0}^n p_j(x_0)^2} \leq \f{C}{n} \to 0
\end{equation}
proving \eqref{1.11}.
\end{proof}

The situation we need to control for Theorem~\ref{T5.3} has much in common with those studied by
Kiselev--Last--Simon \cite{KLS} and their techniques will work here. We note that in our situation,
$\sum_n (a_{n+1}-a_n)^2 + (b_{n+1}-b_n)^2<\infty$, a general condition studied recently by Denisov
\cite{Denppt}, but under the additional assumptions that $a_n \equiv 1$, $b_n\to 0$. It would be
interesting to see if one can extend his ideas to this context (see Conjecture~\ref{Con9.5} and the
discussion following it below).

We depend on the EFGP transform, as do \cite{KLS}, but we need to allow modification for our case where
$a_n$ is not identically one, as it is in \cite{KLS}. Since $b_n\equiv 0$ for us, we state the equations
for that case. One defines $R_n,\theta_n$ by
\begin{align}
R_n \sin (\theta_n) &= a_n u_n \sin(k_n) \lb{5.6}  \\
R_n \cos (\theta_n) &= a_n (u_{n+1}-u_n\cos(k_n)) \lb{5.7}
\end{align}
where $k_n$ is given by
\begin{equation} \lb{5.8}
2\cos(k_n) = \f{x_0}{a_n}
\end{equation}
We note, since $a_n \geq\f12$ and $\sup_{x_0\in K} \abs{x_0} <1$, that uniformly for $x_0\in K$ and all $n$,
\begin{equation} \lb{5.9}
\veps\leq k_n \leq \pi-\veps
\end{equation}
for some $\veps >0$ (depending on $K$).

As in \cite{KLS}, straightforward manipulations of the eigenfunction
equation show \eqref{4.17} is equivalent to
\begin{align}
\f{R_{n+1}^2}{R_n^2} &= 1+(a_{n+1}^2 - a_n^2)\, \f{\sin^2 (\theta_n + k_n)}{a_n^2 \sin^2 (k_n)} \lb{5.10} \\
\cot(\theta_{n+1}) &= \f{a_n}{a_{n+1}} \, \f{\sin(k_n)}{\sin (k_{n+1})} \, \cot(\theta_n + k_n) \lb{5.11}
\end{align}

$R_1$ and $\theta_1$ are functions of $\theta_0$ (given $a_1=a_2=1$) and $R_1$ is, for $x_0\in K$\!, uniformly
bounded above and below. Moreover, by \eqref{5.6} and \eqref{5.9}, for $C$ depending only on $K$\!,
\begin{equation} \lb{5.11a}
\abs{u_n} \leq CR_n
\end{equation}
Define
\begin{equation} \lb{5.12}
X_n = \f{(a_{n+1}^2 - a_n^2) \sin^2 (\theta_n + k_n)}{a_n^2 \sin^2 (k_n)}
\end{equation}

By Lemma~3.5 of \cite{KLS} and $\sup_n \abs{X_n} <\infty$, it suffices to prove that
\begin{equation} \lb{5.16}
\sup_N\, \biggl|\, \sum_{j=1}^N X_j\biggr| <\infty
\end{equation}

Define $\wti B_j, \wti D_j$ by adding to $B_j,D_j$ the index one
before (i.e., the top index of $A_j$ and $C_j$). Then $X_n$ is only
nonzero on $\cup_j (\wti B_j\cup \wti D_j)$. On $\wti B_j\cup\wti
D_j$, by \eqref{5.2}, \eqref{5.3}, and \eqref{5.3c},
\begin{equation} \lb{5.22}
\abs{X_n} \leq E(x_0) j^{-6}
\end{equation}
where
\begin{equation} \lb{5.23}
E(x_0) = E_0\, \sup_n \, \f{1}{\sin^2 (k_n)}
\end{equation}
is bounded above on $K$ by \eqref{5.9}. Theorem~\ref{T5.3} is reduced to proving
\begin{equation} \lb{5.29}
\sup_N \, \biggl|\, \sum_{n=1}^N X_n\biggr| <\infty
\end{equation}
uniformly in $\theta_1$ and $x_0\in K$.

Next, we note that one can write
\begin{equation} \lb{5.30}
X_n =  X_n^\sharp + \wti X_n
\end{equation}
using
\begin{equation} \lb{5.31x}
\sin^2 (\theta_n+k_n) =\tfrac12\, (1-\cos (2(\theta_n + k_n)))
\end{equation}

The $X_n^\sharp$ terms are independent of $\theta_n$ and there is a
symmetry between points in $\wti B_j$ and $\wti D_j$ which, given
the opposite signs of $1- a_n^2/a_{n+1}^2$, causes a partial
cancellation, that is, since $c_j^{-1} -c_j =O(j^{-6})$,
\begin{equation} \lb{5.31}
\sum_{n\in\wti B_j} X_n^\sharp + \sum_{n\in\wti D_j} X_n^\sharp =
O(j^{-6})
\end{equation}
Moreover,
\begin{equation} \lb{5.32}
\sum_{n\in\wti B_j}\, \abs{X_n^\sharp} \leq E(x_0)
\end{equation}

These together (plus the approximate cancellation) implies
\begin{equation} \lb{5.33}
\sup_N \, \biggl| \, \sum_{n=1}^N X_n^\sharp\biggr| \leq E(x_0) + O\biggl(\sum j^{-6}\biggr)
\end{equation}
so we have reduced the proof of \eqref{5.29}, and so of
Theorem~\ref{T5.3}, to proving
\begin{equation} \lb{5.34}
\sup_N\, \biggl| \, \sum_{n=1}^N \wti X_n\biggr| <\infty
\end{equation}
uniformly in $\theta_1$ and $x_0\in K$.

We want to use cancellations of sums of cosines---more explicitly, that sums of $M$ cosines with suitably
varying phase are of order $1$, not $M$\!. Here is what we need:

\begin{lemma}\lb{L5.6} For any $q\in (0,2\pi)$, any $\theta$, and $M$\!,
\begin{equation} \lb{5.35}
\biggl| \, \sum_{\ell=1}^M \cos(q\ell+\theta)\biggr| \leq \biggl[ \sin\biggl(\f{q}{2}\biggr)\biggr]^{-1}
\end{equation}
\end{lemma}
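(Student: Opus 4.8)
The plan is to convert the cosine sum into a geometric series in the complex exponential and estimate it directly; this is just the classical Dirichlet-kernel bound. First I would write $\cos(q\ell+\theta)=\Real\bigl(e^{i\theta}e^{iq\ell}\bigr)$, so that
\[
\sum_{\ell=1}^M \cos(q\ell+\theta)=\Real\biggl(e^{i\theta}\sum_{\ell=1}^M e^{iq\ell}\biggr),
\]
whence the left side of \eqref{5.35} is bounded by $\bigl|\sum_{\ell=1}^M e^{iq\ell}\bigr|$, uniformly in $\theta$.

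Next I would evaluate the geometric series: since $q\in(0,2\pi)$ we have $e^{iq}\neq 1$, so
\[
\sum_{\ell=1}^M e^{iq\ell}=e^{iq}\,\frac{e^{iqM}-1}{e^{iq}-1}.
\]
The numerator $e^{iq}(e^{iqM}-1)$ has modulus at most $2$, while $\abs{e^{iq}-1}=2\abs{\sin(q/2)}=2\sin(q/2)$, the last equality because $q/2\in(0,\pi)$ forces $\sin(q/2)>0$. Dividing gives $\bigl|\sum_{\ell=1}^M e^{iq\ell}\bigr|\leq [\sin(q/2)]^{-1}$, which combined with the previous step is exactly \eqref{5.35}.

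I do not expect any real obstacle here. The only points to be careful about are that the hypothesis $q\in(0,2\pi)$ is precisely what guarantees both $e^{iq}-1\neq 0$ and $\sin(q/2)>0$ (so the right side is a genuine finite positive bound), and that neither $\theta$ nor $M$ enters the final estimate, so the bound is automatically uniform in both — which is exactly what will be needed when the lemma is applied to the partial sums of $\sum \wti X_n$ in the proof of \eqref{5.34}.
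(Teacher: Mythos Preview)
Your proof is correct and is essentially identical to the paper's own argument: both pass to the complex exponential via $\cos\psi=\Real(e^{i\psi})$, sum the resulting geometric series, and bound the numerator by $2$ and the denominator by $2\sin(q/2)$. The only cosmetic difference is that you factor out $e^{i\theta}$ before summing whereas the paper keeps it inside.
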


\begin{proof} Since $\cos(\psi)=\Real (e^{i\psi})$, it suffices to prove this if $\cos(q\ell+\theta)$ is
replaced by $e^{i(q\ell+\theta)}$. By summing a geometric series,
\begin{align*}
\biggl| \, \sum_{\ell=1}^M e^{i(q\ell+\theta)}\biggr|
&= \biggl| \f{e^{i[(M+1)q+\theta]} -e^{i[q+\theta]}}{e^{iq} -1} \biggr| \\
&\leq \f{2}{2\abs{(e^{iq/2} -e^{-iq/2})/2}} = \f{1}{\sin(\f{q}{2})}
\qedhere
\end{align*}
\end{proof}

In $\cos(\theta_n +k_n)$, both $k_n$ and $\theta_n$ are
$n$-dependent. But over subblocks small compared to $j^6$, $k_n$ is
close to constant and $\theta_{n+1}-\theta_n$ is close to constant.
Thus, we break $\wti B_j$ and $\wti D_j$ into $j^4$ blocks, each
with $j^2$ members, call them $\{\wti B_{j,\ell}\}_{\ell=1}^{j^4}$
and $\{\wti D_{j,\ell}\}_{\ell=1}^{j^4}$. For any $n$ in some $\wti
B_j$ or $\wti D_j$, let $\beta_n$ be the first element of the
subblock containing $n$ and
\[
\kappa_n = k_{\beta_n}
\]
Clearly, with constants uniformly bounded over $K$ (below, $C$ will stand for a generic constant bounded
on any compact $K\subset (-1,1)$),
\begin{align}
\abs{n-\beta_n} &\leq Cj^2 \lb{5.36} \\
\abs{k_n-\kappa_n} &\leq Cj^{-4} \lb{5.37}
\end{align}
\eqref{5.37} comes from the fact that over a subblock, $a_n$ changes by at most $j^2 O(j^{-6})$.

In \eqref{5.11}, the ratio of $a$'s is $1$ plus an error of order $j^{-6}$, so given that arc\,cot has
bounded derivatives,
\begin{equation} \lb{5.38}
\abs{\theta_{n+1} - (\theta_n + k_n)} \leq Cj^{-6}
\end{equation}
and so,
\begin{equation} \lb{5.39}
\abs{\theta_{n+1}-\theta_n -\kappa_n} \leq Cj^{-4}
\end{equation}

This implies that if
\begin{equation} \lb{5.40}
\ti\theta_n =\theta_{\beta_n} + (n-\beta_n)\kappa_n
\end{equation}
then
\begin{equation} \lb{5.41}
\abs{\theta_n - \ti\theta_n} \leq Cj^{-2}
\end{equation}
Define $Y_n$ to be $\wti X_n$ with  $\cos (2(\theta_n+k_n))$
replaced by $\cos(2(\ti\theta_n + \kappa_n))$ and $a_n^2 \sin^2
(k_n)$ by $a_{\beta_n}^2 \sin^2 (\kappa_n)$. By \eqref{5.37} and
\eqref{5.41} (since $a_{n+1}^2 -a_n^2 \sim j^{-6}$), on $\wti
B_j\cup\wti D_j$,
\begin{equation} \lb{5.42}
\abs{Y_n -\wti X_n} \leq Cj^{-8}
\end{equation}
and
\[
\sum_{n\in\wti B_j\cup\wti D_j}\, \abs{Y_n - \wti X_n} \leq Cj^{-2}
\]
which is summable in $j$. Thus, to prove \eqref{5.34}, we need
\begin{equation} \lb{5.43}
\sup_N\, \biggl| \sum_{n=1}^N Y_n\biggr| <\infty
\end{equation}
uniformly in $\theta_1$ and $K$\!.

\begin{proof}[Proof of Theorem~\ref{T5.3}] As noted, we are reduced to proving \eqref{5.43}. $\{1,\dots,N\}$
can be broken into sums over $0$ (i.e., $A_j$ and $C_j$, except for
their final indices), sums over some number of subblocks, and one
further partial subblock. Summing over a single subblock is, by
Lemma~\ref{L5.6} (given that, by \eqref{5.9}, $2\kappa_n$ is bounded
away from $0$ and $2\pi$), bounded by $Cj^{-6}$ (from the fact that
$a_{n+1}^2 - a_n^2\sim j^{-6})$. Since there are $2j^4$ subblocks in
$\wti B_j\cup\wti D_j$, we see that
\[
\biggl| \, \sum_{n=1}^N Y_n\biggr| \leq \sum_j (2j^4)(Cj^{-6}) <\infty
\qedhere
\]
\end{proof}

\section{The NTZ Argument} \lb{s6}

Here we begin with the key lemma of Nevai--Totik--Zhang \cite{NTZ91} and apply it to extend the result
of Zhang \cite{Zhang} to allow approach to an isospectral torus.

\begin{proposition}[\cite{NTZ91}]\lb{P6.1} For any positive $r$,
any $\theta,\varphi\in [0,2\pi]$, and $L$,
\begin{equation} \lb{6.1}
\f{12}{L}\, \sum_{j=0}^{L-1} \, \abs{1-re^{i(j\theta+\varphi)}}^2 \geq \abs{1-re^{i\varphi}}^2
\end{equation}
\end{proposition}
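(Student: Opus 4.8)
The plan is to fix $L$ and $\theta$ and reduce \eqref{6.1} to a statement about the single complex number $\tau=\tau_{L,\theta}:=\f1L\sum_{j=0}^{L-1}e^{ij\theta}$. Writing $z=re^{i\varphi}$, the elementary expansion $\abs{1-re^{i(j\theta+\varphi)}}^2=1+r^2-2r\cos(j\theta+\varphi)$ and summation give the exact identity
\[
\f1L\sum_{j=0}^{L-1}\abs{1-re^{i(j\theta+\varphi)}}^2=1+r^2-2\Real(z\tau),
\]
so that
\[
\f{12}{L}\sum_{j=0}^{L-1}\abs{1-re^{i(j\theta+\varphi)}}^2-\abs{1-re^{i\varphi}}^2=11(1+r^2)+2\Real\big(z(1-12\tau)\big).
\]
For fixed $r$ the right side is minimized over $\varphi$ using $\min_\varphi\Real(e^{i\varphi}\zeta)=-\abs{\zeta}$; the minimum is $11(1+r^2)-2r\abs{1-12\tau}$, and minimizing the quadratic $11r^2-2\abs{1-12\tau}\,r+11$ over $r>0$ shows that \eqref{6.1} is \emph{equivalent} to
\[
\abs{1-12\tau}\le11,\qquad\text{equivalently}\qquad 6\abs{\tau}^2-\Real\tau\le5,
\]
i.e.\ to $\tau$ lying in the closed disk $\Delta$ of radius $\tfrac{11}{12}$ centered at $\tfrac1{12}$.

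The easy half: as an average of $L$ unit vectors, $\abs{\tau}\le1$; if in addition $\abs{\tau}\le\tfrac56$, then $6\abs{\tau}^2-\Real\tau\le6\abs{\tau}^2+\abs{\tau}\le6\cdot\tfrac{25}{36}+\tfrac56=5$, so $\tau\in\Delta$ (the disk $\Delta$ simply contains $\{\abs{\tau}\le\tfrac56\}$). The degenerate cases $\theta\in\{0,2\pi\}$ and $L=1$ give $\tau=1\in\partial\Delta$. By the symmetry $\theta\mapsto2\pi-\theta$, $\varphi\mapsto-\varphi$ (which replaces $\tau$ by $\bar\tau$ and preserves $\Delta$) one may assume $\theta\in(0,\pi]$ and $L\ge2$, whence, with $\alpha:=L\theta/2$,
\[
\tau=\f{e^{i(L-1)\theta/2}\sin\alpha}{L\sin(\theta/2)},\qquad \abs{\tau}=\f{\abs{\sin\alpha}}{L\sin(\theta/2)}.
\]

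It remains to treat $\abs{\tau}>\tfrac56$, which is the real content. First, $\abs{\tau}>\tfrac56$ forces $\alpha<\pi$: if $\alpha\ge\pi$ then, using $\theta/2\le\pi/2$ and $\sin x\ge\tfrac2\pi x$, one has $L\sin(\theta/2)\ge\tfrac2\pi\cdot\tfrac{L\theta}2=\tfrac{2\alpha}\pi\ge2$, so $\abs{\tau}\le\tfrac12$. Thus $\sin\alpha>0$, $\arg\tau=\gamma:=\tfrac{(L-1)\theta}2=\alpha\tfrac{L-1}L\in(0,\pi)$, and $\Real\tau=\abs{\tau}\cos\gamma$. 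From $\sin(\theta/2)=\sin(\alpha/L)\ge\tfrac\alpha L-\tfrac16\big(\tfrac\alpha L\big)^3$ and $L\ge2$ we get $L\sin(\theta/2)\ge\alpha\big(1-\tfrac{\alpha^2}{24}\big)$, hence $\tfrac{\sin\alpha}{\alpha}\le\abs{\tau}\le\bar t(\alpha):=\tfrac{\sin\alpha}{\alpha(1-\alpha^2/24)}$, while $\cos\gamma\ge\cos\alpha$ since $0<\gamma<\alpha<\pi$. Since $t\mapsto6t^2-t\cos\alpha$ is increasing for $t\ge\sin\alpha/\alpha$ (its derivative $12t-\cos\alpha$ is positive there, because $12\sin\alpha/\alpha>\cos\alpha$ on $(0,\pi)$), we obtain
\[
6\abs{\tau}^2-\Real\tau\le6\abs{\tau}^2-\abs{\tau}\cos\alpha\le6\,\bar t(\alpha)^2-\bar t(\alpha)\cos\alpha,
\]
and everything reduces to the one-variable inequality $6\bar t(\alpha)^2-\bar t(\alpha)\cos\alpha\le5$ on $(0,\pi)$, which I would verify by elementary calculus: its value at $\alpha=0^+$ is exactly $6-1=5$, and one checks it decreases thereafter (or bounds $\bar t(\alpha)$ and $\cos\alpha$ from above by short Taylor polynomials).

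I expect the step $\abs{\tau}>\tfrac56\Rightarrow\tau\in\Delta$ to be the main obstacle. The target $6\abs{\tau}^2-\Real\tau\le5$ is sharp (equality in the limit $\theta\to0$), so every estimate must be essentially lossless; a crude bound such as $6\abs{\tau}^2-\Real\tau\le6\abs{\tau}^2+\abs{\tau}\le7$ overshoots by $2$. The mechanism one must exploit is the coupling between $\abs{\tau}$ and $\arg\tau$ — that $\arg\tau$ is close to $0$ precisely when $\abs{\tau}$ is close to $1$ — which is quantified above by $\abs{\tau}\le\bar t(\alpha)$ together with $\cos\gamma\ge\cos\alpha$.
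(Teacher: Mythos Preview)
Your reduction to the equivalent condition $|1-12\tau|\le11$ on $\tau=L^{-1}\sum_{j=0}^{L-1}e^{ij\theta}$ is correct and genuinely different from the paper's argument. The paper splits into three cases: $L\le12$ is trivial (the $j=0$ term alone suffices since $12/L\ge1$); for $L\theta\ge2\pi$ with $L\ge13$, the bound $\bigl|\sum_{j}e^{i(j\theta+\varphi)}\bigr|\le\pi/\theta\le L/2$ makes the left side of \eqref{6.1} at least $6(1+r^2)$, while the right side is at most $2(1+r^2)$; for $L\theta<2\pi$, a counting argument shows at least $L/3$ of the phases $\varphi+j\theta$ lie at angular distance $\ge|\varphi|/2$ from $0$, and a separate Lemma~\ref{L6.4} bounds each such term below by $\tfrac14|1-re^{i\varphi}|^2$. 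Your approach replaces this case split and auxiliary lemma by a single geometric constraint on $\tau$; it also explains structurally why the constant is $12$ (the boundary point $\tau=1\in\partial\Delta$ corresponds to $\theta\to0$) and makes transparent that the inequality, once optimized over $r$ and $\varphi$, depends on $\tau$ alone. Your dichotomy $|\tau|\le\tfrac56$ versus $|\tau|>\tfrac56$ loosely parallels the paper's Cases~2 and~3.

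The one real gap is the final step: you assert $6\bar t(\alpha)^2-\bar t(\alpha)\cos\alpha\le5$ on $(0,\pi)$ and defer it to ``elementary calculus.'' The inequality is true, but since equality holds at $\alpha=0^+$ (with value exactly $5$), any verification must be sharp to third order there, so it is not quite routine; for instance, even the natural preliminary bound $\bar t(\alpha)\le1$ (equivalent to $\sin\alpha\le\alpha-\alpha^3/24$ on $(0,\pi)$) already requires a short sign analysis. The paper's Case~3 avoids this delicacy at the cost of the extra Lemma~\ref{L6.4}; you pay for the cleaner overall structure with a tighter endgame that still needs to be written out.
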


\begin{remarks} 1. \cite{NTZ91} allow general $p>0$ where we take $p=2$; but for $p=2$, their constant
is $32$, not $12$.

\smallskip
2. We include a proof at the end of this section for the reader's convenience and because we want to
emphasize the concepts in the context of what we cannot do in the next section.

\smallskip
3. For $\theta$ not near $0$ or $2\pi$, the idea behind a bound of this form is the same as the idea
behind Lemma~\ref{L5.6}. As $\theta\to 0$ or $2\pi$, for this argument to work, the constant $12$ has
to be replaced by larger and larger numbers. The idea for small $\theta$ is instead to use the fact that
enough terms need to be close to the initial one.
\end{remarks}

\begin{corollary}[\cite{Zhang}] \lb{C6.2} Let $A$ be a $2\times 2$ matrix with
\begin{equation} \lb{6.2}
\det(A)=1 \qquad \abs{\tr(A)} \leq 2
\end{equation}
Then for any vector $v\in\bbC^2$ {\rm{(}}with $v=(v_1,v_2)$ the components of $v${\rm{)}},
\begin{equation} \lb{6.3}
\abs{(A^{L-1} v)_1}^2 \leq \f{12}{L}\, \sum_{j=0}^{L-1}\, \abs{(A^j v)_1}^2
\end{equation}
\end{corollary}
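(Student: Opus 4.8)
The plan is to diagonalize (or nearly diagonalize) the matrix $A$ and thereby reduce the vector inequality \eqref{6.3} to the scalar inequality \eqref{6.1} of Proposition~\ref{P6.1}. The hypotheses \eqref{6.2} say $\det(A)=1$ and $\abs{\tr(A)}\le 2$, so the eigenvalues of $A$ are $e^{\pm i\theta}$ for some $\theta\in[0,2\pi]$ (with the degenerate case $\theta=0$ or $\theta=\pi$ when $\abs{\tr(A)}=2$ handled separately, or by a limiting argument). In the generic case $e^{i\theta}\ne e^{-i\theta}$, there is a basis of eigenvectors; write $v = c_+ w_+ + c_- w_-$ where $A w_\pm = e^{\pm i\theta} w_\pm$. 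Then $A^j v = c_+ e^{ij\theta} w_+ + c_- e^{-ij\theta} w_-$, and taking first components, $(A^j v)_1 = c_+ (w_+)_1 e^{ij\theta} + c_- (w_-)_1 e^{-ij\theta}$.

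First I would dispose of trivial cases: if either $c_+(w_+)_1 = 0$ or $c_-(w_-)_1 = 0$, then $\abs{(A^j v)_1}$ is constant in $j$ and \eqref{6.3} holds trivially with constant $1 \le 12/L \cdot L$. Otherwise, set $\alpha = c_+(w_+)_1$ and $\beta = c_-(w_-)_1$, both nonzero. Then $(A^j v)_1 = \alpha e^{ij\theta} + \beta e^{-ij\theta} = \alpha e^{ij\theta}(1 + (\beta/\alpha) e^{-2ij\theta})$, so $\abs{(A^j v)_1}^2 = \abs{\alpha}^2 \abs{1 + (\beta/\alpha)e^{-2ij\theta}}^2$. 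Writing $\beta/\alpha = r e^{i\psi}$ with $r > 0$ (note $r \ne 0$), we get $\abs{(A^j v)_1}^2 = \abs{\alpha}^2 \abs{1 + r e^{i(\psi - 2j\theta)}}^2 = \abs{\alpha}^2\abs{1 - r e^{i(\psi + \pi - 2j\theta)}}^2$. Now apply Proposition~\ref{P6.1} with the angle $2\theta$ in place of $\theta$, phase $\varphi = \psi + \pi - 2(L-1)\theta$ re-indexed appropriately (i.e.\ match the $j = L-1$ term on the right of \eqref{6.1} to the left side of \eqref{6.3}), and the same $r$. Since $2\theta \in [0,4\pi]$ can be reduced mod $2\pi$ without changing any of the values $e^{i(\psi - 2j\theta)}$ appearing, Proposition~\ref{P6.1} gives exactly $\abs{1 - re^{i\varphi}}^2 \le \frac{12}{L}\sum_{j=0}^{L-1}\abs{1 - re^{i(j(2\theta)+\varphi)}}^2$, which after multiplying by $\abs{\alpha}^2$ is \eqref{6.3}.

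I expect the main obstacle to be the bookkeeping around the degenerate and edge cases rather than any real analytic difficulty: when $A$ has a double eigenvalue ($\abs{\tr A}=2$), $A$ may be a nontrivial Jordan block, and $(A^j v)_1$ can grow linearly in $j$; one must check \eqref{6.3} still holds (it does, since the right side has $L$ terms growing quadratically while the left has one term of size $\sim (L-1)^2$, and $12/L \cdot \sum j^2 \gtrsim L^2$ comfortably dominates), and similarly the case $\theta = \pi$. A clean way to unify everything is to prove the bound first for $A$ with distinct eigenvalues and then note both sides of \eqref{6.3} are continuous in the entries of $A$ on the closed set $\{\det A = 1, \abs{\tr A}\le 2\}$, so the inequality passes to the boundary/degenerate locus by density. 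The other point to be careful about is matching the indexing: Proposition~\ref{P6.1} isolates the $j=0$ term $\abs{1-re^{i\varphi}}^2$, whereas \eqref{6.3} isolates the top power $(A^{L-1}v)_1$, so one should either re-run the NTZ argument with the roles of $0$ and $L-1$ swapped (it is symmetric under $j \mapsto L-1-j$) or simply choose $\varphi$ to be the phase appearing at $j = L-1$.
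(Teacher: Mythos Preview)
Your proposal is correct and follows essentially the same route as the paper: diagonalize $A$, write $(A^j v)_1=\alpha e^{ij\theta}+\beta e^{-ij\theta}$, reduce to Proposition~\ref{P6.1}, and handle $\abs{\tr A}=2$ by density. The only cosmetic difference is in the indexing bookkeeping you flagged: the paper avoids it by setting $B=A^{-1}$ and $w=A^{L-1}v$, so that \eqref{6.3} becomes $\abs{w_1}^2\le \tfrac{12}{L}\sum_{j=0}^{L-1}\abs{(B^jw)_1}^2$, which matches the $j=0$ isolated term in Proposition~\ref{P6.1} directly.
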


\begin{proof} If $A$ obeys \eqref{6.2}, so does $B = A^{-1}$, and if $w=A^{L-1} v$, \eqref{6.3} is
equivalent to
\begin{equation} \lb{6.4}
\abs{w_1}^2 \leq \f{12}{L}\, \sum_{j=0}^{L-1}\, \abs{(B^j w)_1}^2
\end{equation}
so we need only prove \eqref{6.4}.

Any $B$ obeying \eqref{6.2} is a limit of $B$'s with $\abs{\tr(B)} <2$, so we can suppose
\begin{equation} \lb{6.5}
\det(B)=1 \qquad \abs{\tr(B)} <2
\end{equation}
In that case, $B$ is diagonalizable and has eigenvalue $e^{\pm i\theta}$ with $2\cos(\theta) =\tr(B)$.

In particular, for any $v$,
\begin{equation} \lb{6.6}
(B^\ell v)_1 = \alpha e^{i\ell\theta} + \beta e^{-i\ell\theta}
\end{equation}
for some $\alpha,\beta$. By replacing $\theta$ by $-\theta$, we can suppose $\alpha\neq 0$ (if
$\alpha =\beta=0$, \eqref{6.4} is trivial!). Write $-\beta/\alpha = re^{-i\varphi}$. Then \eqref{6.4}
is equivalent to (after multiplying by $\abs{\alpha}^{-2}$)
\begin{equation} \lb{6.7}
\abs{1-re^{-i\varphi}}^2 \leq \f{12}{L}\, \sum_{j=0}^{L-1}\, \abs{1-re^{-i(2j\theta+\varphi)}}^2
\end{equation}
which, after a change of names of $\theta,\varphi$, is \eqref{6.1}.
\end{proof}

Recall that if $\{a_n,b_n\}_{n=1}^\infty$ are Jacobi parameters, a two-sided set $\{a_n^{(r)},
b_n^{(r)}\}_{n=-\infty}^\infty$ is called a right limit if for some $m_j\to\infty$ and all $n=0,
\pm 1,\dots$,
\begin{equation} \lb{6.8x}
a_{m_j+n} \to a_n^{(r)} \qquad b_{m_j+n}\to b_n^{(r)}
\end{equation}
If $\sup_n (\abs{a_n} + \abs{b_n}) < \infty$, there are right limits by compactness and, indeed, any sequence
$m_k$ has a subsequence defining a right limit. Right limits are described in \cite[Ch.~7]{Rice} and
references quoted there.

Any finite gap set
\begin{equation} \lb{6.8}
\fre = [\alpha_1,\beta_1] \cup \cdots \cup [\alpha_{\ell+1}, \beta_{\ell+1}]
\end{equation}
with
\begin{equation} \lb{6.9}
\alpha_1 < \beta_1 < \alpha_2 < \beta_2 < \cdots < \beta_{\ell+1}
\end{equation}
defines an $\ell$-dimensional isospectral torus, $\calT_\fre$, of almost periodic two-sided Jacobi
matrices, $J$ with $\sigma(J)=\fre$. $\calT_\fre$ can be defined using minimal Herglotz functions
(\cite[Ch.~5]{Rice}) or reflectionless requirements (\cite[Ch.~7]{Rice}). If $\rho_\fre$ is the
potential theoretic equilibrium measure for $\fre$ (see, e.g., \cite{StT, EqMC}), we say $\fre$ is
``periodic'' if and only if each $\rho_\fre ([\alpha_j,\beta_j])$ is rational; equivalently,
all $J\in\calT_\fre$ have a common period $p$.

The Nevai class for $\fre$ is defined to be those one-sided $J$'s whose right limits are all in
$\calT_\fre$. For $\fre=[-2,2]$, $\calT_\fre$ has a single point (with period $1$!) and the Nevai
class for $\fre$ is the usual Nevai class.

\begin{theorem}\lb{T6.3} If $J$ lies in the Nevai class for a periodic $\fre$, then the Nevai condition
holds uniformly for $J$ on $\fre$.
\end{theorem}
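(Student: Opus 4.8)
The plan is to carry the Nevai--Totik--Zhang strategy, in the form of Corollary~\ref{C6.2}, over to the periodic isospectral torus. The one new ingredient is that when $\fre$ is periodic every $J'\in\calT_\fre$ is genuinely $p$-periodic, so along the tail of $J$ the transfer matrix is, block by block, uniformly close to a \emph{fixed power} of a single matrix $A$ with $\det A=1$, $\abs{\tr A}\le 2$, and Corollary~\ref{C6.2} applies to that $A$. First I record that \eqref{1.13a} holds: every $J'\in\calT_\fre$ has $\inf_n a_n'>0$, these infima are uniformly positive since $\calT_\fre$ is compact, so $\liminf a_n>0$, and since $a_n>0$ for every $n$ ($\supp(d\rho)$ infinite) we get $\inf_n a_n>0$. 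By Theorem~\ref{T2.2}, \eqref{2.14}, and the last line of its proof, it then suffices to show that $(p_n(x_0)^2+p_{n+1}(x_0)^2)/\sum_{j=0}^n p_j(x_0)^2\to 0$ uniformly for $x_0\in\fre$; for this it is enough, with a universal $C$, to prove: for every $\veps>0$ there are $L,N$ with
\[
p_n(x_0)^2\le\Bigl(\f{C}{L}+\veps\Bigr)\sum_{j=0}^n p_j(x_0)^2\qquad\text{for all }n\ge N,\ x_0\in\fre,
\]
and then let $L\to\infty$ and $\veps\to 0$ (the $p_{n+1}$ term follows since $\sum_{j=0}^{n+1}p_j(x_0)^2\le 2\sum_{j=0}^{n}p_j(x_0)^2$ eventually).

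The technical heart is a local constancy lemma. Since $\fre$ is periodic, all $J'\in\calT_\fre$ share a period $p$, and $T_p^{J'}(x_0)$ has $\det=1$ and trace $\Delta(x_0)$, the discriminant of $\fre$, with $\abs{\Delta(x_0)}\le 2$ on $\fre$; also $C_0:=\sup\{\norm{T_p^{J'}(x_0)}:J'\in\calT_\fre,\ x_0\in\fre\}<\infty$ by compactness. The claim is: for every $\eta>0$ and $L$ there is $M$ so that for all $m\ge M$ and all $x_0\in\fre$ the $L$ consecutive one-period blocks $A_{m+ip}(x_0)\cdots A_{m+(i-1)p+1}(x_0)$, $i=1,\dots,L$, all lie within $\eta$ (operator norm) of one and the same matrix. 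If this failed, pick $m_j\to\infty$, $x_j\in\fre$ for which two of the blocks differ by $\ge\eta$; passing to $x_j\to x_\infty\in\fre$ and to a right limit $J^{(r)}\in\calT_\fre$ of $J$ along a subsequence of $m_j$, the blocks at $x_j$ converge (finite products of transfer matrices depend continuously on parameters and on $x_0$) to the one-period blocks of $J^{(r)}$ at $x_\infty$, which all equal $T_p^{J^{(r)}}(x_\infty)$ because $J^{(r)}$ is $p$-periodic --- a contradiction.

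To conclude, fix $\veps$ and $L$. For $n$ large and $x_0\in\fre$, let $B_1,\dots,B_L$ be the one-period blocks ending at index $n$ (so $m=n-Lp$ in the claim), all within $\eta$ of a single $A$ with $\det A=1$, $\abs{\tr A}\le 2$, $\norm A\le C_0+\eta$. Put $v=\binom{p_{n-Lp}(x_0)}{a_{n-Lp}p_{n-Lp-1}(x_0)}$ and $U_j=B_j\cdots B_1$; by \eqref{3.10}, $p_{n-Lp+jp}(x_0)=(U_jv)_1$, in particular $p_n(x_0)=(U_Lv)_1$. From $A^2=(\tr A)A-I$ one gets $A^j=\f{\sin j\phi}{\sin\phi}A-\f{\sin(j-1)\phi}{\sin\phi}I$ with $2\cos\phi=\tr A$ (the limiting form when $\phi=0$), hence $\norm{A^j}\le j(C_0+\eta+1)$ uniformly in $x_0\in\fre$; this is the only point at which the band edges, where $A$ may be parabolic, need care. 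Telescoping $U_j-A^j=\sum_{i=1}^j A^{j-i}(B_i-A)U_{i-1}$ then bootstraps to $\norm{U_j-A^j}\le C_1\eta L^3$ for $0\le j\le L$, once $\eta$ is small relative to $L$. Applying Corollary~\ref{C6.2} to $A$ and the vector $Av$ gives $\abs{(A^Lv)_1}^2\le\f{12}{L}\sum_{j=1}^L\abs{(A^jv)_1}^2$, so using $\abs{(A^jv)_1}\le\abs{(U_jv)_1}+\norm{U_j-A^j}\norm v$,
\[
p_n(x_0)^2\le\f{C}{L}\sum_{j=1}^L p_{n-Lp+jp}(x_0)^2+C_2\,\eta^2 L^{6}\norm v^2 .
\]
Since $\norm v^2\le C_3(p_{n-Lp}(x_0)^2+p_{n-Lp-1}(x_0)^2)$ and both this and the first sum are at most $\sum_{j=0}^n p_j(x_0)^2$, we get $p_n(x_0)^2\le(C/L+C_2C_3\eta^2 L^6)\sum_{j=0}^n p_j(x_0)^2$. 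Choose $L$ with $C/L<\veps$, then $\eta$ with $C_2C_3\eta^2 L^6<\veps$, then $M=M(\eta,L)$ from the claim: the displayed bound holds for $n\ge M+Lp$, uniformly in $x_0\in\fre$. With the reduction this proves the Nevai condition for $J$ uniformly on $\fre$.

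The substance lies in the claim and in uniformity up to the band edges. Periodicity of $\fre$ is used essentially in the claim --- for a general finite gap set the elements of $\calT_\fre$ are only almost periodic and their one-period blocks do not coincide, which is why a different route is needed in Section~\ref{s7} and, more flexibly, in Section~\ref{snew8}. At the band edges, where $\Delta(x_0)=\pm2$ and $T_p(x_0)$ is parabolic, $\norm{A^j}$ grows linearly rather than staying bounded; this is exactly what $\abs{\sin j\phi/\sin\phi}\le j$ controls, and since $\eta$ is chosen after $L$ it absorbs the resulting fixed power of $L$. Everything else is the bookkeeping of the $L\to\infty$, $n\to\infty$ double limit with constants uniform in $x_0$.
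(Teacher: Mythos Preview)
Your argument is correct and follows essentially the same route as the paper: use that right limits lie in $\calT_\fre$ and are genuinely $p$-periodic, so the tail one-period transfer blocks approximate powers of a single $A$ with $\det A=1$, $\abs{\tr A}\le 2$, and then invoke Corollary~\ref{C6.2}. The paper streamlines the execution by passing directly to a subsequence along which $x_n\to x_\infty$, the parameters converge to a right limit $J^{(r)}\in\calT_\fre$, and the normalized vector $(p_n(x_n),p_{n-1}(x_n))/\|\cdot\|$ converges, so that Corollary~\ref{C6.2} applies \emph{exactly} in the limit to give \eqref{6.12} with constant $12/L$ --- this bypasses your telescoping error estimate $\|U_j-A^j\|\le C_1\eta L^3$ and the attendant bookkeeping, at the cost of being less explicit.
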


\begin{remark} If $J$ has a single $p$ element orbit, $J^{(r)}\in\calT_\fre$, as right limits (i.e., $J$
is asymptotically periodic), this is a result of \cite{Zhang,Szwarc2}.
\end{remark}

\begin{proof} Let $p$ be the period of $\fre$. We will prove that any $x_n\in\fre$ and any $L$
\begin{equation} \lb{6.10}
\limsup_{n\to\infty} \, \f{\abs{p_n(x_n)}^2}{\sum_{j=n-pL}^n \abs{p_j(x_n)}^2} \leq \f{12}{L}
\end{equation}
from which
\begin{equation} \lb{6.11}
\limsup_{n\to\infty}\, \f{\abs{p_n(x_n)}^2}{\sum_{j=0}^n \abs{p_j(x_n)}^2} =0
\end{equation}
proving the claimed uniform Nevai condition.

Without loss, we can pass to a subsequence so that $x_n\to
x_\infty$, so that the ratio in \eqref{6.10} still converges to the
$\limsup$, so that $a_{n+k}\to a_k^{(r)}$, $b_{n+k}\to b_k^{(r)}$
for some periodic right limit and so that $(p_n(x_n),
p_n(x_{n-1}))/\norm{(p_n(x_n), p_{n-1}(x_{n-1}))}$ has a limit in
$\bbC^2$.

The transfer matrix over $p$ units starting at $0$ for that $x_\infty$ is a matrix $A$ obeying \eqref{6.2}.
So, by \eqref{6.3},
\begin{equation} \lb{6.12}
\limsup_{n\to\infty}\, \f{\abs{p_n(x_n)}^2}{\sum_{j=0}^{L-1} \abs{p_{n-jp}(x_n)}^2} \leq \f{12}{L}
\end{equation}
which implies \eqref{6.10}.
\end{proof}

We turn to the proof of Proposition~\ref{P6.1}. Without loss, we can (by taking complex conjugates) suppose
\begin{equation} \lb{6.13}
0<\theta \leq \pi
\end{equation}
(since $\theta=0$ is trivial). There are three cases to consider:

\smallskip
\noindent\ul{Case 1}. \ $L\leq 12$, which is trivial.

\smallskip
\noindent\ul{Case 2}.
\begin{equation} \lb{6.14}
\theta L \geq 2\pi \qquad L\geq 13
\end{equation}

\smallskip
\noindent\ul{Case 3}.
\begin{equation} \lb{6.15}
\theta L < 2\pi
\end{equation}

\begin{proof}[Proof of Proposition~\ref{P6.1}] Consider Case~2 first,
expanding
\begin{equation} \lb{6.16}
\sum_{j=0}^{L-1}\, \abs{1- re^{i(j\theta +\varphi)}}^2 = L(1+r^2) -2r \Real [X]
\end{equation}
where
\begin{equation} \lb{6.17}
X=\f{e^{i(L\theta+\varphi)}- e^{i\varphi}}{e^{i\theta}-1}
\end{equation}
so
\begin{equation} \lb{6.18}
\abs{X} \leq \f{1}{\f12 \abs{1-e^{i\theta}}} = \f{1}{\abs{\sin(\f{\theta}{2})}}
\leq \f{\pi}{\theta}
\end{equation}
since
\begin{equation} \lb{6.19}
\inf_{0\leq y\leq \pi}\, \biggl[\f{\sin(\f{y}{2})}{y}\biggr] = \f{1}{\pi}
\end{equation}

By \eqref{6.14}, $\pi/\theta \leq L/2$, so by \eqref{6.16},
\begin{align}
\text{LHS of \eqref{6.1}} & \geq L(1+r^2)-Lr \notag \\
&\geq \f{L}{2}\, (1+r^2) \notag \\
& > 6 (1+r^2) \lb{6.20}
\end{align}
(since $L>12$). Clearly,
\begin{equation} \lb{6.21}
\text{RHS of \eqref{6.1}} \leq \abs{1+r}^2 \leq 2(1+r^2)
\end{equation}
so \eqref{6.1} holds in Case~2.

That leaves Case~3. We will consider $\varphi <0$ ($\varphi >0$ is even easier). Consider the $L$
points
\begin{equation} \lb{6.22}
T=\{\varphi+j\theta\}_{j=0}^{L-1}
\end{equation}
Since $L\theta <2\pi$, they do not make it back around the circle.
Consider the three sets: $S_1 = \{\eta\mid\varphi\leq \eta <
\f{\varphi}{2}\}$, $S_2 = \{\f{\varphi}{2} \leq \eta < 0\}$, and
$S_3 = \{0\leq\eta < -\f{\varphi}{2}\}$. Clearly, $\#(S_1\cap T)
\geq \max(\#(S_2\cap T), \#(S_3\cap T))$, so at most two-thirds of
the points in $T$ lie in $S_2\cup S_3$.

By the lemma below, if $\eta\in T\setminus (S_2\cup S_3)$,
\begin{equation} \lb{6.23}
\abs{1-re^{i\eta}}^2 \geq \tfrac14\, \abs{1-re^{i\varphi}}^2
\end{equation}
so
\[
\text{LHS of \eqref{6.1}} \geq \f{L}{3}\biggl(\f{12}{L}\biggr)\, \f14\, \abs{1-re^{i\varphi}}^2
= \text{RHS of \eqref{6.1}}
\qedhere
\]
\end{proof}

\begin{lemma} \lb{L6.4}
\begin{equation} \lb{6.24}
\inf_{\substack{ \pi \geq \abs{\eta}\geq \abs{\f{\varphi}{2}} \\ 0 < r}} \,
\f{\abs{1-re^{i\eta}}}{\abs{1-re^{i\varphi}}} \geq \f12
\end{equation}
\end{lemma}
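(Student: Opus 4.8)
The plan is to collapse the two-variable infimum to a single algebraic inequality by exploiting monotonicity. First I would record the elementary fact that $\abs{1-re^{i\psi}}^2 = 1 - 2r\cos\psi + r^2$ depends on $\psi$ only through $\cos\psi$ and is a decreasing function of $\cos\psi$; hence on $[0,\pi]$ the quantity $\abs{1-re^{i\psi}}$ is nondecreasing in $\abs{\psi}$. Consequently, for fixed $r>0$ and fixed $\varphi$, as $\eta$ ranges over $\{\pi \ge \abs{\eta} \ge \abs{\varphi/2}\}$ the numerator $\abs{1-re^{i\eta}}$ is smallest at $\abs{\eta}=\abs{\varphi/2}$. (If $\abs{\varphi}>2\pi$ this constraint set is empty and the infimum is $+\infty$, so we may assume $\abs{\varphi}\le 2\pi$; moreover neither $\abs{1-re^{i\varphi/2}}$ nor $\abs{1-re^{i\varphi}}$ changes under $\varphi\mapsto\abs{\varphi}$, so we may also assume $\varphi\in[0,2\pi]$.) Writing $\psi:=\varphi/2\in[0,\pi]$, the lemma is thereby reduced to
\[
4\,\abs{1-re^{i\psi}}^2 \ge \abs{1-re^{2i\psi}}^2 \qquad \text{for all } r>0,\ \psi\in[0,\pi].
\]

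To prove this last inequality, set $c=\cos\psi\in[-1,1]$ and use $\cos 2\psi = 2c^2-1$. Expanding both sides as quadratics in $r$, the inequality rearranges into
\[
3(1+r^2) \ge 2r\,(1 + 4c - 2c^2).
\]
Here I would invoke two pointwise bounds: $1 + 4c - 2c^2 = 3 - 2(1-c)^2 \le 3$, valid for every $c$, and $1+r^2\ge 2r$. Multiplying the latter by $3$ and combining (using $r>0$), one gets $3(1+r^2)\ge 6r \ge 2r(1+4c-2c^2)$, which is exactly the claim. Tracking equality, it forces $r=1$ and $c=1$, i.e.\ $\varphi=0$, which explains why the constant $\tfrac12$ is attained only in the limit and cannot be improved.

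I do not anticipate a genuine obstacle. The only place that needs a little care is the reduction step—confirming that over the admissible range of $\eta$ the numerator is minimized precisely at $\abs{\eta}=\abs{\varphi/2}$, and dealing with the vacuous case $\abs{\varphi}>2\pi$—after which everything collapses to the displayed quadratic inequality, whose verification is a two-line exercise. (Alternatively one could prove the displayed inequality directly by completing the square in $r$, but the split $3(1+r^2)\ge 6r\ge 2r(1+4c-2c^2)$ is cleaner.)
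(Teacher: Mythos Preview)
Your proof is correct. Both you and the paper begin with the same reduction: monotonicity of $\abs{1-re^{i\psi}}$ in $\abs{\psi}$ on $[0,\pi]$ forces the infimum in $\eta$ to occur at $\abs{\eta}=\abs{\varphi/2}$. After that the treatments diverge. The paper exploits the invariance of the ratio under $r\mapsto r^{-1}$ to restrict to $r\in(0,1]$, asserts (without detail) that the ratio is decreasing in $r$ there, and then evaluates exactly at $r=1$ via half-angle identities to get $1/\abs{2\cos(\varphi/4)}\ge\tfrac12$. You instead expand $4\abs{1-re^{i\psi}}^2-\abs{1-re^{2i\psi}}^2$ directly and finish with the two-line bound $3(1+r^2)\ge 6r\ge 2r(1+4c-2c^2)$. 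Your route is more self-contained, since it replaces the paper's unproven monotonicity-in-$r$ claim by an explicit algebraic inequality; the paper's route, on the other hand, yields the exact infimum $1/\abs{2\cos(\varphi/4)}$ as a byproduct.
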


\begin{proof} $\abs{1-re^{i\eta}}/\abs{1-re^{i\varphi}}$ is invariant under $r\to r^{-1}$, so we can
suppose $0<r \leq 1$. Moreover, $\abs{1-re^{i\eta}}$ is invariant
under $\eta\to -\eta$ and increasing in $\eta$ for $0<\eta <\pi$, so
the $\inf$ occurs at $\eta= \f{\varphi}{2}$.

A straightforward calculation shows
$\abs{1-re^{i\varphi/2}}/\abs{1-re^{i\varphi}}$ is decreasing in $r$
in $r\in (0,1]$, so the $\inf$ is
$\abs{1-e^{i\varphi/2}}/\abs{1-e^{i\varphi}} = \abs{\sin
(\f{\varphi}{4})}/\abs{\sin(\f{\varphi}{2})} = 1/\abs{2\cos
(\f{\varphi}{4})} \geq \f12$.
\end{proof}

\section{The Nevai Class of a General Finite Gap Set} \lb{s7}

In this section, we will discuss the extension of Theorem~\ref{T6.3} to general finite gap sets. We
will only be able to prove the weaker result that the Nevai condition holds uniformly on compact
subsets of $\fre^\intt$. In the next section, using different methods, we will prove the result
uniformly on all of $\fre$.

We begin by noting the following abstraction of the argument we used in the proof of Theorem~\ref{T6.3}:

\begin{proposition}\lb{P7.1} Let $J$ be a half-line Jacobi matrix and let $\calR$ be the set of its right
limits. Let $K\subset\bbR$ be a compact set. For $v\in\bbC^2$ and
$J^{(r)}\in\calR$, let $u_n (v,J^{(r)},z)$ solve
\begin{equation} \lb{7.1}
a_n^{(r)} u_{n+1} + b_n^{(r)} u_n + a_{n-1}^{(r)} u_{n-1} = zu_n
\end{equation}
with
\begin{equation} \lb{7.2}
(u_0,u_1) = (v_1, v_2)
\end{equation}
Suppose that for all $\veps$, there is $N$\! so that for all unit
vectors $v\in\bbC^2$, all $J^{(r)}\in\calR$, all $x_0\in K$\!, and
all $n>N$\!,
\begin{equation} \lb{7.3}
\f{\abs{u_n(v,J^{(r)},x_0)}^2}{\sum_{j=0}^n \abs{u_j
(v,J^{(r)},x_0)}^2} \leq \veps
\end{equation}
Then $J$ obeys the Nevai condition uniformly on $K$\!.
\end{proposition}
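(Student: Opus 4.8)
The statement is a limiting/compactness reduction: it says that a uniform-in-$(v,J^{(r)},x_0)$ subexponential bound on solutions of the right-limit equations forces the Nevai condition (equivalently \eqref{1.11}) uniformly on $K$ for the original $J$. The natural approach is by contradiction via the compactness of the space of right limits. First I would recall from Theorem~\ref{T2.2} (and Proposition~\ref{P2.1}) that, under \eqref{1.13a}, the Nevai condition uniformly on $K$ is equivalent to
\[
\lim_{n\to\infty}\ \sup_{x_0\in K}\ \frac{p_n(x_0)^2}{\sum_{j=0}^n p_j(x_0)^2} = 0,
\]
so it suffices to rule out the negation: there exist $\veps_0>0$, points $x_{(m)}\in K$, and indices $n_m\to\infty$ with
\[
\frac{p_{n_m}(x_{(m)})^2}{\sum_{j=0}^{n_m} p_j(x_{(m)})^2} \geq \veps_0 .
\]

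**Key steps.** (1) Pass to a subsequence so that $x_{(m)}\to x_\infty\in K$, so that $a_{n_m+k}\to a_k^{(r)}$ and $b_{n_m+k}\to b_k^{(r)}$ for all $k\in\bbZ$ for some $J^{(r)}\in\calR$ (possible since \eqref{1.13a} and compact support give uniform bounds on the Jacobi parameters, hence right limits exist by a diagonal argument), and so that the normalized vector $v_m := (p_{n_m}(x_{(m)}),\,a_{n_m}p_{n_m-1}(x_{(m)}))/\|(p_{n_m}(x_{(m)}),\,a_{n_m}p_{n_m-1}(x_{(m)}))\|$ converges to a unit vector $v\in\bbC^2$. (2) For each fixed $L$, look backward from $n_m$: the vector $(p_{n_m-L+k}(x_{(m)}),\,a_{n_m-L+k}p_{n_m-L+k-1}(x_{(m)}))$ for $k=0,\dots,L$ is obtained from the terminal vector by applying the inverse one-step transfer matrices $A_{n_m-L+k}(x_{(m)})^{-1}\cdots$, i.e.\ it is a fixed finite product of one-step transfer matrices (for the parameters $a_{n_m+j},b_{n_m+j}$ at the energy $x_{(m)}$) applied to $v_m$. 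As $m\to\infty$ these parameters converge to $a^{(r)}_j,b^{(r)}_j$, the energy converges to $x_\infty$, and $v_m\to v$; so after normalizing by $\|(p_{n_m}(x_{(m)}),\dots)\|$, the first components
\[
\frac{p_{n_m-L+k}(x_{(m)})}{\|(p_{n_m}(x_{(m)}),\,a_{n_m}p_{n_m-1}(x_{(m)}))\|}\ \longrightarrow\ \text{(first component of a fixed product)}\cdot v,
\]
which is precisely $u_{L-k}(\ti v,J^{(r)},x_\infty)$ for the appropriate initial vector $\ti v$ determined by running the recursion backward from $v$ (here one reindexes so the terminal data sits at index $L$). (3) Now
\[
\frac{p_{n_m}(x_{(m)})^2}{\sum_{j=n_m-L}^{n_m} p_j(x_{(m)})^2}\ \longrightarrow\ \frac{u_L(\ti v,J^{(r)},x_\infty)^2}{\sum_{i=0}^{L} u_i(\ti v,J^{(r)},x_\infty)^2}\ ,
\]
and since the denominator of the original ratio only grows when we extend the sum back to $0$, the left side is $\geq$ the full ratio, which by assumption is $\geq\veps_0$ for every $m$ in our subsequence. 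Hence
\[
\frac{u_L(\ti v,J^{(r)},x_\infty)^2}{\sum_{i=0}^{L} u_i(\ti v,J^{(r)},x_\infty)^2}\ \geq\ \veps_0
\]
for \emph{every} $L$. But $\ti v$ is a unit vector (or can be normalized to one without changing the ratio), $J^{(r)}\in\calR$, and $x_\infty\in K$, so the hypothesis \eqref{7.3} applies: for $L$ large enough the left side is $<\veps_0$. This contradiction proves the proposition.

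**Main obstacle.** The only genuinely delicate point is the bookkeeping in step (2): matching the "backward from $n_m$" truncated eigenfunction to the "forward from $0$" solution $u_n(v,J^{(r)},x_0)$ of the right-limit recursion, including getting the initial data $\ti v$ right and keeping the normalization consistent so that the ratios pass to the limit cleanly. This is where one must be careful that $\calR$ is defined as the set of all two-sided right limits and that reversing a transfer-matrix product of a two-sided Jacobi matrix again corresponds to a solution of an equation of the same form (using $\det A_j=1$). Everything else is routine: uniform boundedness of parameters from \eqref{1.13a} and compact support, existence of right limits by compactness, continuity of finite transfer-matrix products in the parameters and the energy, and the elementary observation that appending nonnegative terms to the denominator only decreases the ratio. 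One should also note $v\neq 0$ automatically since it is a limit of unit vectors, so the limiting ratio is well defined.
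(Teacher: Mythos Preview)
Your argument is correct and is essentially the same approach the paper takes: the paper presents Proposition~\ref{P7.1} without a separate proof, pointing out that it is merely an abstraction of the proof of Theorem~\ref{T6.3}, and that proof proceeds exactly as you do---pass to a subsequence so that $x_n\to x_\infty\in K$, the shifted Jacobi parameters converge to a right limit, and the normalized terminal data $(p_n,\,a_n p_{n-1})/\|\cdot\|$ converges in $\bbC^2$, then compare the truncated ratio over the last $L$ indices to a right-limit solution ratio and use that the truncated ratio dominates the full one. Your identification of the indexing bookkeeping as the only delicate point is accurate; note that the shifted matrix you need at step~(2) is the $(-L)$-shift of $J^{(r)}$, which still lies in $\calR$ since $\calR$ is shift-invariant, so the uniformity of hypothesis~\eqref{7.3} over $\calR$ disposes of the apparent $L$-dependence of both $\ti v$ and the right limit.
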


\begin{proposition}\lb{P7.2} Let $J$ be a half-line Jacobi matrix obeying \eqref{1.13a}
and let $\calR$ be the set of its right limits. Suppose that there
is a compact subset $K\subset\bbR$ such that for each $x_0\in K$ and
$J^{(r)}\in\calR$, there is a solution $u_n^+(J^{(r)},x_0)$ of
\eqref{7.1} {\rm{(}}with $z=x_0${\rm{)}} so that
\begin{alignat}{2}
&\text{\rm{(i)}} \qquad && \sup_{n,x_0,J^{(r)}}\, \abs{u_n^+ (J^{(r)}, x_0)} <\infty \lb{7.4} \\
&\text{\rm{(ii)}} \qquad && \inf_{x_0,J^{(r)}}\, a_0^{(r)} \abs{
u_1^+ \ol{u_0^+} - \ol{u_1^+}\,  u_0^+} >0 \lb{7.5}
\end{alignat}
Then the Nevai condition holds for $J$ uniformly on $K$\!.
\end{proposition}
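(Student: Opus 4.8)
The plan is to verify the hypothesis of Proposition~\ref{P7.1}, i.e.\ the uniform ``subexponential growth'' bound \eqref{7.3}, by using the putative Weyl-type solution $u_n^+(J^{(r)},x_0)$ and the constancy of the Wronskian for solutions of \eqref{7.1}. Fix $J^{(r)}\in\calR$, $x_0\in K$, and a unit vector $v\in\bbC^2$, and let $u_n=u_n(v,J^{(r)},x_0)$ be the solution with initial data $v$. Since \eqref{7.1} is a second-order difference equation, its solution space is two-dimensional; the key structural fact is that for \emph{any} solution $u$, the Wronskian $W_n(u,u^+)=a_n^{(r)}(u_{n+1}\,\ol{u_n^+} - u_n\,\ol{u_{n+1}^+})$ (or the real analog, since $x_0$ is real and we may take $u^+$ real-linear-independent from its conjugate via \eqref{7.5}) is $n$-independent. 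More usefully, $W_n(u^+,\ol{u^+}) = a_n^{(r)}(u_{n+1}^+\,\ol{u_0^+}\cdots)$ is a nonzero constant by hypothesis (ii), so $u^+$ and $\ol{u^+}$ span the solution space, and $v$-solutions decompose as $u_n=\alpha u_n^+ + \ol\alpha\,\ol{u_n^+}$ when $v$ is real (and a $\mathbb{C}$-linear combination in general).

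First I would record the two-sided bound this gives. From the Wronskian relation $a_0^{(r)}(u_1^+\ol{u_0^+}-\ol{u_1^+}u_0^+) = a_n^{(r)}(u_{n+1}^+\ol{u_n^+}-\ol{u_{n+1}^+}u_n^+)$ for all $n$, hypothesis (ii) forces $\abs{u_{n+1}^+}^2+\abs{u_n^+}^2$ to be bounded \emph{below} uniformly (since the left side is bounded below, the right side is at most $2 a_n^{(r)}(\abs{u_{n+1}^+}^2+\abs{u_n^+}^2)$ and $a_n^{(r)}\le A_+$). Combined with hypothesis (i), which bounds $\abs{u_n^+}$ \emph{above}, we get constants $0<c_-\le c_+<\infty$, uniform over $n,x_0,J^{(r)}$, with $c_-\le \abs{u_n^+}^2+\abs{u_{n+1}^+}^2\le c_+$. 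Now for a general solution $u_n$ with unit initial data: expressing $u_n$ via the transfer matrix over $n$ steps, and writing the transfer matrix in the basis $\{(u^+,\ol{u^+})\}$, we find $\abs{u_n}^2\le C(\abs{u_n^+}^2+\abs{u_{n+1}^+}^2)\cdot\norm{v}^2 \le C c_+$ uniformly — so $\sup_{n,x_0,J^{(r)},\norm{v}=1}\abs{u_n(v,J^{(r)},x_0)}<\infty$, i.e.\ all solutions with bounded initial data stay uniformly bounded. (This is the analog of ``bounded transfer matrices'' from Theorem~\ref{T5.3}: indeed (i)+(ii) for one solution force boundedness of $T_n$.)

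The conclusion is then immediate and parallels \eqref{5.6x}. Given the uniform upper bound $\abs{u_j(v,J^{(r)},x_0)}^2\le M$ for all $j$, and given a uniform \emph{lower} bound on at least a positive fraction of the terms — which one gets from the lower bound $c_-\le\abs{u_j^+}^2+\abs{u_{j+1}^+}^2$ transported back to the $v$-solution by noting that not both $u_j$ and $u_{j+1}$ can be tiny when $v$ is a unit vector and the one-step transfer matrix has determinant $a_{j-1}^{(r)}/a_j^{(r)}$ bounded above and below by \eqref{1.13a} — we get $\sum_{j=0}^n\abs{u_j(v,J^{(r)},x_0)}^2\ge c\, n$ for a uniform $c>0$. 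Therefore
\[
\frac{\abs{u_n(v,J^{(r)},x_0)}^2}{\sum_{j=0}^n\abs{u_j(v,J^{(r)},x_0)}^2}\le\frac{M}{c\,n}\to 0
\]
uniformly in $v,x_0,J^{(r)}$, which is exactly \eqref{7.3}. Proposition~\ref{P7.1} then yields the uniform Nevai condition on $K$.

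The step I expect to be the main obstacle is passing from ``one distinguished solution $u^+$ is bounded with a nonvanishing Wronskian against its conjugate'' to ``\emph{every} solution with unit initial data is uniformly bounded,'' done uniformly over the compact family of right limits and over $x_0\in K$ simultaneously. The pointwise statement is elementary linear algebra, but the uniformity requires that the change-of-basis matrix from standard coordinates to the $\{u^+,\ol{u^+}\}$ basis have norm bounded uniformly — which is precisely what the \emph{quantitative} forms of (i) and (ii) (a uniform sup and a uniform positive inf) are designed to deliver, together with \eqref{1.13a} to control $a_0^{(r)}$. One must be slightly careful that $u^+$ and $\ol{u^+}$ genuinely span (not just are linearly independent at one site) — but constancy of the Wronskian makes ``linearly independent at one site'' equivalent to ``linearly independent as solutions,'' so (ii) handles this cleanly. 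No compactness or limiting argument over $\calR$ is actually needed here since the hypotheses are already stated uniformly over $\calR$; the work is entirely in making the linear-algebra estimate quantitative.
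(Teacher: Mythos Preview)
Your proposal is correct and follows essentially the same route as the paper. The paper packages the argument more compactly by introducing the matrix
\[
U_n = \frac{1}{d}\begin{pmatrix} u_{n+1}^+ & \ol{u_{n+1}^+}\\ a_n^{(r)} u_n^+ & a_n^{(r)}\,\ol{u_n^+}\end{pmatrix},
\]
with $d$ a square root of the (constant) Wronskian \eqref{7.5}; then (i) and (ii) make $U_n$ uniformly bounded with $\det U_n=1$, hence $U_n^{-1}$ is bounded too, and $T_n=U_nU_0^{-1}$ gives the uniform two-sided bound on $\abs{u_n}^2+\abs{u_{n+1}}^2$ directly---exactly the ``change-of-basis'' step you singled out as the main obstacle, done in one stroke. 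From there both you and the paper conclude the ratio in \eqref{7.3} is $O(1/n)$ and invoke Proposition~\ref{P7.1}.
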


\begin{remark} These are very strong conditions, but they hold in the finite gap case.
\end{remark}

\begin{proof} Define
\begin{equation} \lb{7.6}
U_n (J^{(r)},x_0) = \f{1}{d(J^{(r)},x_0)}
\begin{pmatrix}
u_{n+1}^+  & \ol{u_{n+1}^+} \\
a_n^{(r)} u_n^+ & a_n^{(r)}\, \ol{u_n^+}
\end{pmatrix}
\end{equation}
where $d(J^{(r)},x_0)$ is a square root of
\begin{equation} \lb{7.7}
a_n^{(r)} (u_{n+1}^+ \,\ol{u_n^+} - \ol{u_{n+1}^+} \, u_n^+)
\end{equation}
which is $n$-independent. Then $U_n$ is uniformly bounded in $x_0\in
K$\!, $J^{(r)}\in\calR,n$ by \eqref{7.4}/\eqref{7.5} and has
determinant $1$, so the same is true of $U_n^{-1}$.

Moreover, the transfer matrix for $J^{(r)}$ is
\begin{equation} \lb{7.8}
T_n = U_n U_0^{-1}
\end{equation}
so it is bounded in $n,J^{(r)},x$, and has a bounded inverse. This
shows
\begin{equation} \lb{7.9}
\abs{u_n (v, J^{(r)}, x_0)}^2 + \abs{u_{n+1} (v,J^{(r)}, x_0)}^2
\end{equation}
is uniformly bounded above and below as $v$ runs through unit vectors.

The ratio in \eqref{7.3} is thus uniformly bounded by $c/n$, so Proposition~\ref{P7.1} is applicable.
\end{proof}

\begin{theorem}\lb{T7.3} If $J$ lies in the Nevai class for a finite gap set $\fre$, then the Nevai
condition holds uniformly on compact subsets of $\fre^\intt$\!.
\end{theorem}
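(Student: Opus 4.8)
The plan is to verify the hypotheses of Proposition~\ref{P7.2} with $K$ an arbitrary compact subset of $\fre^\intt$. Since $J$ lies in the Nevai class for $\fre$, by definition every right limit $J^{(r)}\in\calR$ lies in the isospectral torus $\calT_\fre$, so it suffices to produce, for each $J^{(r)}\in\calT_\fre$ and each $x_0\in K$, a solution $u_n^+(J^{(r)},x_0)$ of \eqref{7.1} at $z=x_0$ obeying the bounds (i) and (ii) of that proposition \emph{uniformly} over $n$, over $x_0\in K$, and over the whole torus $\calT_\fre$. Condition \eqref{1.13a} holds here because each $J^{(r)}\in\calT_\fre$ has $a_n^{(r)}$ bounded above and below away from $0$, uniformly over the torus.

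First I would recall (from \cite[Chs.~5, 7]{Rice}) the structure of $\calT_\fre$: each $J^{(r)}\in\calT_\fre$ is an almost periodic whole-line Jacobi matrix with $\sigma(J^{(r)})=\fre$ that is reflectionless on $\fre$, and $\calT_\fre$ is a compact $\ell$-dimensional torus on which the Jacobi parameters depend continuously. For $x_0\in\fre^\intt$ I would take $u_n^+(J^{(r)},x_0)$ to be the boundary value at $z=x_0+i0$ of the Weyl solution that is $\ell^2$ at $+\infty$ for $\Ima z>0$, suitably normalized; in the periodic case this is the Floquet (Bloch) solution with unimodular multiplier $e^{i\theta}$ determined by the discriminant via $\Delta(x_0)=2\cos\theta$, and in the general finite gap case it is the corresponding Bloch/Baker--Akhiezer solution.

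The two uniform bounds then come from the a.c.\ theory on the band interiors. The reflectionless condition forces the Weyl solutions from $+\infty$ and from $-\infty$ at $x_0+i0$ to be complex conjugates of one another after normalization, so $u_n^+$ and $\overline{u_n^+}$ are linearly independent solutions whose Wronskian is a nonzero multiple of $\Ima m^+(x_0+i0)$; on $\fre^\intt$ the half-line $m$-function $m^+$ has a.c.\ boundary values with $\Ima m^+(x_0+i0)$ bounded and bounded away from $0$, uniformly for $x_0\in K$ and $J^{(r)}\in\calT_\fre$ by continuity of $m^+$ in all variables on the open bands together with compactness of $K$ and of $\calT_\fre$. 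This gives \eqref{7.5}. For \eqref{7.4}: on $\fre^\intt$ the transfer matrices of $J^{(r)}$ are uniformly bounded --- in the periodic case because $\abs{\Delta(x_0)}<2$ makes the one-period transfer matrix conjugate (by a bounded conjugation) to an elliptic rotation, and in general by the analogous Bloch conjugation to a rotation, which is bounded on compact subsets of $\fre^\intt$ and again uniform over $\calT_\fre$ by continuity and compactness. Boundedness of the transfer matrices makes $\abs{u_n^+}^2+\abs{u_{n+1}^+}^2$ bounded above uniformly, which is \eqref{7.4}. With (i) and (ii) in hand, Proposition~\ref{P7.2} yields the Nevai condition uniformly on $K$, and since $K\subset\fre^\intt$ was an arbitrary compact set, the theorem follows.

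The main obstacle is precisely the \emph{uniformity} of both the boundedness of the Bloch solutions and the lower bound on the Wronskian --- equivalently, uniform two-sided control of the a.c.\ spectral density (and of the conjugation to rotations) --- simultaneously over the entire torus $\calT_\fre$ and over $x_0\in K$. Both ingredients degenerate as $x_0$ approaches a band edge, where the two Bloch solutions coalesce (in the periodic case $\abs{\Delta}\to2$), so this argument is genuinely confined to compact subsets of the open set $\fre^\intt$ and cannot reach $\partial\fre$; getting uniformity on all of $\fre$ requires the different and more robust approach of Section~\ref{snew8}.
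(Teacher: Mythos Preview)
Your proposal is correct and follows essentially the same route as the paper: both invoke Proposition~\ref{P7.2} and then appeal to the existence of suitable solutions on $\calT_\fre$ satisfying \eqref{7.4}/\eqref{7.5}. The paper's proof is a single sentence citing the Jost solutions constructed in \cite{CSZ1} (see also \cite[Ch.~9]{Rice}); you have simply unpacked what those solutions are (the Bloch/Baker--Akhiezer solutions, i.e., boundary values of the Weyl solution) and sketched why they satisfy the two uniform bounds via continuity and compactness of $K\times\calT_\fre$, together with the reflectionless structure and the positivity of $\Ima m^+$ on the open bands.
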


\begin{proof} In \cite{CSZ1} (see also \cite[Ch.~9]{Rice}), Jost solutions are constructed on the
isospectral torus, $\calT_\fre$, that obey \eqref{7.4}/\eqref{7.5}.
\end{proof}

\section{Absence of Pure Points in Right Limits} \lb{snew8}

In this section, we want to note and apply the following:

\begin{theorem}\lb{Tn8.1} Let $J$ be a bounded half-line Jacobi matrix with \eqref{1.13a} and
let $\calR$ be the set of its right
limits. Let $\Xi$ be the set of $x_0\in\bbR$ so that for every
$J^{(r)}\in\calR$ and every nonzero solution $u_n$ of \eqref{7.1}
with $z=x_0$, we have
\begin{equation} \lb{n8.1}
\sum_{n=-\infty}^0\, \abs{u_n}^2 =\infty
\end{equation}
Then
\begin{SL}
\item[{\rm{(i)}}] The Nevai condition holds uniformly on any compact subset of $\Xi$.
\item[{\rm{(ii)}}] If $\Xi$ contains $\sigma_\ess (J)$, then the Nevai condition holds uniformly on
$\sigma(J)$.
\end{SL}
\end{theorem}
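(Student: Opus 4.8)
The plan is first to pass, via Theorem~\ref{T1.2} and the identity \eqref{2.14}, to the quantitative form of the Nevai condition: given \eqref{1.13a}, uniform convergence on a set $S$ is equivalent to $\sup_{x\in S}(\abs{p_n(x)}^2+\abs{p_{n+1}(x)}^2)/K_n(x,x)\to 0$ as $n\to\infty$. I want to prove this for $S$ an arbitrary compact subset of $\Xi$ in (i), and for $S=\sigma(J)$ in (ii). Both parts reduce to a single compactness statement: \emph{if $n_k\to\infty$, $x_k\to x_\infty$ in $\bbR$, and $(\abs{p_{n_k}(x_k)}^2+\abs{p_{n_k+1}(x_k)}^2)/K_{n_k}(x_k,x_k)\ge\veps_0>0$ for all $k$, then there are $J^{(r)}\in\calR$ and a nonzero two-sided solution $(u_m)_{m\in\bbZ}$ of \eqref{7.1} at $z=x_\infty$ for $J^{(r)}$ with $\sum_{m\le 0}\abs{u_m}^2\le\veps_0^{-1}$.}

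To prove this I would form $w^{(k)}_m=p_{m+n_k}(x_k)\,(\abs{p_{n_k}(x_k)}^2+\abs{p_{n_k+1}(x_k)}^2)^{-1/2}$ for $m\ge -n_k$. This solves the three-term recursion for the shifted parameters $\{a_{j+n_k},b_{j+n_k}\}$ at energy $x_k$ (away from the left endpoint), satisfies $\abs{w^{(k)}_0}^2+\abs{w^{(k)}_1}^2=1$, and has
\[
\sum_{m=-n_k}^{0}\abs{w^{(k)}_m}^2=\f{K_{n_k}(x_k,x_k)}{\abs{p_{n_k}(x_k)}^2+\abs{p_{n_k+1}(x_k)}^2}\le\veps_0^{-1}.
\]
By \eqref{1.13a} the shifted $a$'s lie in $[A_-,A_+]$ and the shifted $b$'s are bounded, so the recursion propagates $k$-uniform bounds $\abs{w^{(k)}_m}\le C_{\abs m}$ in both directions. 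Passing to a subsequence, the shifted parameters converge coordinatewise to the parameters of some $J^{(r)}\in\calR$ (this is precisely the definition \eqref{6.8x} of a right limit) and $w^{(k)}_m\to u_m$ for every $m$, with $u$ a two-sided solution of \eqref{7.1} for $J^{(r)}$ at $x_\infty$. The normalization passes to the limit, giving $\abs{u_0}^2+\abs{u_1}^2=1$, so $u\ne 0$; and Fatou, applied first with a fixed cutoff $-M$ and then letting $M\to\infty$, gives $\sum_{m\le 0}\abs{u_m}^2\le\veps_0^{-1}$. (Part (i) alone could instead be routed through Proposition~\ref{P7.1}, running the same shift-and-compactness step on right-limit solutions $u_n(v,J^{(r)},x_0)$; the direct argument is cleaner and also covers (ii).)

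Part (i) is now immediate: were the Nevai condition to fail uniformly on a compact $K\subset\Xi$, we would get $\veps_0>0$, $n_k\to\infty$, $x_k\in K$ realizing the hypothesis; pass to a subsequence with $x_k\to x_\infty\in K\subset\Xi$, and the statement produces a nonzero solution that is $\ell^2$ at $-\infty$ for some right limit at energy $x_\infty$, contradicting $x_\infty\in\Xi$. For part (ii), suppose the Nevai condition fails uniformly on $\sigma(J)$; choose $\veps_0$, $n_k\to\infty$, $x_k\in\sigma(J)$ with the ratio $\ge\veps_0$, and, using that $\sigma(J)$ is compact ($J$ bounded), pass to $x_k\to x_\infty\in\sigma(J)$. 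If $x_\infty\in\sigma_\ess(J)\subset\Xi$, the argument of (i) applies verbatim. Otherwise $x_\infty$ is an isolated point of $\sigma(J)$, hence an eigenvalue of $J$, so $\sum_n\abs{p_n(x_\infty)}^2<\infty$; and since $x_k\to x_\infty$ with $x_k\in\sigma(J)$ and $x_\infty$ isolated, $x_k=x_\infty$ for all large $k$. Then $\abs{p_{n_k}(x_\infty)}^2+\abs{p_{n_k+1}(x_\infty)}^2\to 0$ while $K_{n_k}(x_\infty,x_\infty)\uparrow\sum_j\abs{p_j(x_\infty)}^2\in(0,\infty)$, so the ratio tends to $0$, contradicting that it stays $\ge\veps_0$. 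Hence the Nevai condition holds uniformly on $\sigma(J)$.

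The main obstacle is the compactness statement, and within it two points deserve care: the rescaled solutions must not degenerate in the limit — which is why one normalizes by $(\abs{p_{n_k}}^2+\abs{p_{n_k+1}}^2)^{1/2}$ rather than by $\abs{p_{n_k}}$, so that $\abs{u_0}^2+\abs{u_1}^2=1$ survives — and summability at $-\infty$ must pass to the limit, a routine Fatou argument that relies on $a_n\ge A_->0$ to iterate the recursion backward under a $k$-independent bound. Everything else is bookkeeping with the recursion and with the definition of right limits.
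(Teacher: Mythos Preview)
Your proof is correct and takes a genuinely different route from the paper's. The paper argues via spectral measures: it first interprets the ratio $p_n(x)^2/K_n(x,x)$ as a weight in the spectral measure of the truncated matrix $J_{n+1;F}$ for the vector $\delta_{n+1}$ (Lemma~\ref{Ln8.3}), then modifies $b_{n(j)+1}$ to a value $\tilde b_{n(j)+1}$ so that $x_j$ becomes an exact eigenvalue of the truncation, flips the finite matrix, and invokes a weak-convergence lemma (Lemma~\ref{Ln8.4}) to conclude that the limiting half-line operator $J^{(\infty)}$ has a pure point of mass $\ge\veps$ at $x_\infty$; the corresponding $\ell^2$ eigenfunction, read back along the left half of the right limit, yields the forbidden solution. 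Your argument bypasses all of this machinery: you normalize the shifted $p$-values directly, extract a diagonal subsequence to get both a right limit and a limiting two-sided solution, and use a truncated-Fatou step to pass the $\ell^2$ bound through the limit. Your approach is more elementary and self-contained (no auxiliary lemmas, no spectral-measure detour, no $\tilde b$ modification); the paper's approach has the virtue of making explicit the connection to Nevai's weak-limit measures \eqref{n8.3}, which is what motivated the theorem in the first place. Your handling of the isolated-eigenvalue case in (ii) is also slightly cleaner than the paper's terse one-liner.
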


We will provide a proof below. We first discuss some consequences.

\begin{theorem}\lb{Tn8.2} If $J$ lies in the Nevai class for a finite gap set $\fre$, then the Nevai condition
holds uniformly on $\sigma(J)$.
\end{theorem}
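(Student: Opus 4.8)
The plan is to derive Theorem~\ref{Tn8.2} from Theorem~\ref{Tn8.1}(ii) by verifying that the hypothesis $\sigma_\ess(J)\subset\Xi$ holds whenever $J$ lies in the Nevai class for a finite gap set $\fre$. Since $J$ is in the Nevai class for $\fre$, by definition every right limit $J^{(r)}\in\calR$ belongs to the isospectral torus $\calT_\fre$; in particular each such $J^{(r)}$ is a two-sided, almost periodic, reflectionless Jacobi matrix with $\sigma(J^{(r)})=\fre$. Also, by standard right-limit theory (e.g.\ \cite[Ch.~7]{Rice} or \cite{LaSi2}), $\sigma_\ess(J)=\bigcup_{J^{(r)}\in\calR}\sigma(J^{(r)})=\fre$. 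So it suffices to show that every $x_0\in\fre$ lies in $\Xi$, i.e.\ that for each $J^{(r)}\in\calT_\fre$ and each $x_0\in\fre$, no nonzero solution of \eqref{7.1} (with $z=x_0$) is $\ell^2$ at $-\infty$.

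First I would treat the generic point $x_0\in\fre^\intt$ (the interior of the bands, excluding the finitely many band edges). For $J^{(r)}\in\calT_\fre$ and $x_0\in\fre^\intt$, the transfer matrix $T_n(x_0)$ for $J^{(r)}$ is \emph{bounded} in $n\in\bbZ$: this is exactly the content of the Jost-solution construction of \cite{CSZ1} used already in the proof of Theorem~\ref{T7.3}, where solutions obeying \eqref{7.4}/\eqref{7.5} are produced, from which (as in Proposition~\ref{P7.2}, equation \eqref{7.8}) one gets $\|T_n\|$ uniformly bounded. A bounded transfer matrix together with constancy of the Wronskian forces $\abs{u_n}^2+\abs{u_{n+1}}^2$ to be bounded above \emph{and} below along any nonzero solution, so no solution can be $\ell^2$ at $-\infty$ (indeed at either end). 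Hence $\fre^\intt\subset\Xi$. Alternatively, and more cleanly, one may invoke the reflectionlessness of $J^{(r)}$ on $\fre$: a reflectionless whole-line Jacobi matrix has no eigenvalues in $\fre$ and, more strongly, for a.e.\ (in fact every interior) $x_0\in\fre$ its solutions are genuinely two-sided-bounded, ruling out one-sided $\ell^2$ decay.

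It remains to handle the finitely many band edges $x_0\in\partial\fre$. The cleanest route is to observe that excluding finitely many points does not matter for Theorem~\ref{Tn8.1}(ii): if $\Xi\supset\sigma_\ess(J)\setminus F$ for a finite set $F$, then since each point of $F$ is at worst an isolated possible eigenvalue contributing a single point mass, uniform convergence of $d\eta_n^{(x_0)}\to\delta_{x_0}$ already on the compact $\sigma_\ess(J)\setminus(\text{small nbhds of }F)$, combined with the fact that at an actual eigenvalue $x_0$ one has $p_n(x_0)\in\ell^2$ and hence \eqref{1.11} holds there trivially (while if $x_0\in F$ is not an eigenvalue the argument of Theorem~\ref{Tn8.1} still applies at that point because the $\ell^2$-at-$-\infty$ condition still fails there), gives uniformity on all of $\sigma(J)$. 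If one prefers to include the edges directly in $\Xi$: at a band edge, solutions of \eqref{7.1} for $J^{(r)}\in\calT_\fre$ are still only polynomially bounded (the transfer matrix grows at most linearly), so in particular no nonzero solution is $\ell^2$ at $-\infty$, giving $\partial\fre\subset\Xi$ as well; thus $\fre\subset\Xi$ outright.

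The main obstacle I anticipate is precisely the band-edge analysis: the slick bounded-transfer-matrix argument from \cite{CSZ1} is only asserted on $\fre^\intt$, so one must either argue the polynomial (hence non-$\ell^2$) bound at the edges by hand — using the explicit structure of the discriminant and of the isospectral torus solutions at the edges of a finite gap set — or route around it via the finite-exceptional-set observation above. Everything else (identifying $\sigma_\ess(J)$ with $\fre$ via right limits, invoking reflectionlessness/the Jost construction, and then quoting Theorem~\ref{Tn8.1}(ii)) is routine given what has been established earlier in the paper.
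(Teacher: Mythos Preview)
Your overall strategy—apply Theorem~\ref{Tn8.1}(ii) by showing $\sigma_\ess(J)=\fre\subset\Xi$—is exactly the paper's, and for $x_0\in\fre^\intt$ your bounded-transfer-matrix argument is equivalent to the paper's statement (via \cite{CSZ1}) that every solution is almost periodic.

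At the band edges, however, both of your proposed routes have gaps. Route (a) is circular: to say that ``the argument of Theorem~\ref{Tn8.1} still applies at $x_0\in F$ because the $\ell^2$-at-$-\infty$ condition still fails there'' is to assume exactly what you set out to avoid proving. Route (b) asserts that since the transfer matrix grows at most linearly, no nonzero solution can be $\ell^2$ at $-\infty$; but that inference is invalid as stated. An upper bound $\|T_n\|\le C|n|$ together with $\det T_n=1$ only yields $\|T_n v\|\ge c\|v\|/|n|$, i.e.\ $|u_n|^2+|u_{n+1}|^2\ge c'/n^2$, which is perfectly compatible with $\ell^2$ decay. What is actually needed—and what the paper quotes from \cite{CSZ1}—is the precise structure of solutions at an edge: every solution is the sum of an almost periodic sequence and $n$ times an almost periodic sequence. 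Since a nonzero almost periodic sequence has positive mean square and hence fails \eqref{n8.1}, and a solution with nonzero linear part certainly fails it, this gives $\partial\fre\subset\Xi$. Once you cite that structural fact rather than merely the polynomial bound, your proof coincides with the paper's.
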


\begin{proof} In \cite{CSZ1} (see also \cite[Ch.~9]{Rice}), it is proven that for any $J^{(r)}$ in the
isospectral torus, $\calT_\fre$, and any $x_0\in \fre^\intt$, every solution is almost periodic; and for $x_0
\in\{\alpha_j,\beta_j\}_{j=1}^{\ell+1}$, every solution is the sum of an almost periodic function and $n$
times an almost periodic function. Nonzero almost periodic functions obey \eqref{n8.1} and $\sigma_\ess (J)
=\fre$, so Theorem~\ref{Tn8.1} is applicable.
\end{proof}

There is a class of whole-line stochastic Jacobi matrices called
subshifts, with work reviewed in \cite{Dam}. The most famous is the
Fibonacci model which has $(\chi_I=$ characteristic function of the
set $I$)
\[
a_n\equiv 1 \qquad b_n\equiv \chi_{[1-\alpha, 1]} ((n\alpha + \theta)_{\text{mod}\, 1})
\]
where $\alpha =\f12 (\sqrt{5}-1)$ and $\theta$ is a parameter (e.g.,
$0$). The name comes from the fact that the transfer matrix, $T_n$,
has special properties when $n$ is a Fibonacci number. Damanik--Lenz
\cite{DL} showed that there are no solutions $\ell^2$ at $-\infty$
for any $\theta$ and any $x_0$ in the spectrum, and it is not hard
to see the right limits for any half-line Fibonacci problem are
again Fibonacci models or such models modified at a single site.
Thus, Theorem~\ref{Tn8.1} is applicable, and

\begin{theorem}\lb{Tn8.2a} Any Fibonacci model restricted to a half line obeys the Nevai condition uniformly
on the spectrum.
\end{theorem}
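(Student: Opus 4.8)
The plan is to verify the hypotheses of Theorem~\ref{Tn8.1} for a half-line Fibonacci Jacobi matrix $J$, whose parameters are $a_n\equiv 1$ and $b_n = \chi_{[1-\alpha,1]}((n\alpha+\theta)_{\text{mod}\,1})$ for $n\ge 1$ (with $\alpha=\tfrac12(\sqrt5-1)$). Concretely, I would check: (a)~$J$ satisfies \eqref{1.13a} --- trivially, since $a_n\equiv 1$ gives $A_-=A_+=1$; (b)~$\Xi\supseteq\sigma_\ess(J)$, where $\Xi$ is the set of energies at which no right limit has a solution that is $\ell^2$ at $-\infty$; then conclude via part~(ii) of Theorem~\ref{Tn8.1} that the Nevai condition holds uniformly on $\sigma(J)$.

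The two substantive points both concern right limits. First I would argue that every right limit of the half-line Fibonacci $J$ is again a (two-sided) Fibonacci Hamiltonian, possibly modified at a single site. This is because the potential $b_n=\chi_{[1-\alpha,1]}((n\alpha+\theta)_{\text{mod}\,1})$ is a sampling of a rotation of the torus: along any sequence $m_j\to\infty$ one can pass to a subsequence so that $(m_j\alpha+\theta)_{\text{mod}\,1}$ converges to some $\phi\in[0,1)$, and then $b_{m_j+n}\to\chi_{[1-\alpha,1]}((n\alpha+\phi)_{\text{mod}\,1})$ for every $n\in\bbZ$ except possibly the finitely many (in fact at most one) $n$ for which $n\alpha+\phi$ lands on the boundary of the interval $[1-\alpha,1]$; at such an exceptional site the limit of $b_{m_j+n}$ may be $0$ or $1$ depending on the side of approach. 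So the right limit is a two-sided Fibonacci Hamiltonian with phase $\phi$, or such a Hamiltonian with its potential changed at one site. Second, and this is where I would invoke the Damanik--Lenz result \cite{DL}: for the two-sided Fibonacci Hamiltonian (any phase $\theta$), for every $x_0$ in the spectrum there is no nonzero solution of the difference equation that is $\ell^2$ at $-\infty$ --- equivalently (since the equation is symmetric under reflection) no solution decays. I would then note that the spectrum is the same for all phases (minimality of the rotation / strict ergodicity) and equals $\sigma_\ess(J)$ for the half-line problem (the half-line spectrum differs from the two-sided spectrum only by discrete eigenvalues, and $\sigma_\ess(J)$ is governed by the right limits). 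Hence for $x_0\in\sigma_\ess(J)$, no right limit --- Fibonacci or single-site-modified Fibonacci --- has an $\ell^2$-at-$-\infty$ solution, so $x_0\in\Xi$.

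One technical wrinkle to dispose of: a right limit that is a Fibonacci Hamiltonian modified at a single site. The absence of an $\ell^2$-at-$-\infty$ solution is unaffected by a finite-rank (here single-site) perturbation, because changing $b_n$ at one point changes a solution only through the transfer matrix across that one site, which is an invertible matrix with norm and inverse-norm bounded by a constant; thus a solution of the modified equation is $\ell^2$ at $-\infty$ if and only if the corresponding solution of the unmodified Fibonacci equation (obtained by matching initial data far to the left) is, and the latter never happens on the spectrum. I would also note that the spectrum of the modified operator agrees with that of the Fibonacci Hamiltonian up to possibly finitely many points, and these lie outside $\sigma_\ess(J)$ anyway, so this does not affect the inclusion $\sigma_\ess(J)\subseteq\Xi$.

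With both points in hand the theorem follows immediately from Theorem~\ref{Tn8.1}(ii): $\Xi\supseteq\sigma_\ess(J)$ forces the Nevai condition to hold uniformly on all of $\sigma(J)$. The main obstacle is really the input from \cite{DL} --- the fact that the Fibonacci Hamiltonian has purely singular continuous spectrum with no decaying solutions at $-\infty$ for every energy in the spectrum and every phase; once that is quoted, identifying the right limits and handling the single-site modification are routine. I would present the right-limit identification carefully, since it is the only genuinely model-specific computation, but it amounts to the standard observation that samplings of an ergodic (here, irrational-rotation) process have right limits within the same hull, the boundary-of-interval sites being the only source of the single-site ambiguity.
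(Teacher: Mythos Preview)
Your proposal is correct and follows essentially the same approach as the paper: identify the right limits as (possibly single-site-modified) two-sided Fibonacci Hamiltonians, invoke Damanik--Lenz \cite{DL} for the absence of solutions that are $\ell^2$ at $-\infty$, and apply Theorem~\ref{Tn8.1}(ii). In fact you supply considerably more detail than the paper does --- the paper simply asserts that ``it is not hard to see'' what the right limits are and does not explicitly dispose of the single-site modification, whereas you spell out both points carefully.
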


\begin{remark} Results of Damanik--Killip--Lenz \cite{DKL} allow extension of this to general Sturmian models.
\end{remark}

Theorem~\ref{Tn8.1} was motivated by our trying to understand Szwarc \cite{Szwarc2}. He noted that one could
use results of Nevai \cite{Nev79} on weak limits of the measure
\begin{equation} \lb{n8.3}
\sum_j \, \f{p_n(x_j^{(n+1)})^2}{\sum_{k=0}^n p_k (x_j^{(n+1)})^2} \, \delta_{x_j^{(n+1)}}
\end{equation}
where $x_j^{(n+1)}$ are the solutions of
\begin{equation} \lb{n8.4}
p_{n+1}(x_j^{(n+1)}) =0
\end{equation}
that is, the ratios in \eqref{1.11} are weights in some natural measures. Thus, a failure of \eqref{1.11} should
imply a suitable half-line limit has a pure point and that is what is forbidden by \eqref{n8.1}. We begin with:

\begin{lemma} \lb{Ln8.3} Let $J_{n;F}$ be the $n\times n$ truncated Jacobi matrix. Then the spectral measure of
$J_{n+1;F}$ and vector $\delta_{n+1}$ is \eqref{n8.3} where the $x_j^{(n+1)}$ solve \eqref{n8.4}.
\end{lemma}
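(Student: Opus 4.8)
The plan is to identify the measure in \eqref{n8.3} as a Gaussian-type quadrature object attached to the truncated Jacobi matrix $J_{n+1;F}$. First I would recall the spectral-theoretic meaning of the objects involved: the eigenvalues of the $(n+1)\times(n+1)$ truncation $J_{n+1;F}$ are exactly the zeros $x_j^{(n+1)}$ of $p_{n+1}$, since $p_{n+1}(x) = (a_1\cdots a_{n+1})^{-1}\det(x - J_{n+1;F})$ (the standard relation between orthogonal polynomials and the characteristic polynomial of the finite Jacobi matrix). Thus the support of both measures coincides. It then remains to compute the weights, i.e.\ to show that the spectral measure of the pair $(J_{n+1;F},\delta_{n+1})$ assigns mass $p_n(x_j^{(n+1)})^2 / \sum_{k=0}^n p_k(x_j^{(n+1)})^2$ to the atom $x_j^{(n+1)}$.

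For the weight computation I would proceed as follows. Let $\lambda = x_j^{(n+1)}$ be an eigenvalue of $J_{n+1;F}$ and let $\varphi^{(\lambda)}$ be the corresponding (normalized) eigenvector in $\bbC^{n+1}$. Because $J_{n+1;F}$ is a Jacobi matrix and $\lambda$ is a simple eigenvalue, the eigenvector is determined up to scale by the three-term recursion \eqref{1.1}, and its components are proportional to $(p_0(\lambda), p_1(\lambda), \dots, p_n(\lambda))$; here one uses that truncating at size $n+1$ is precisely the condition $p_{n+1}(\lambda)=0$ that makes this finite vector an honest eigenvector. Hence the normalized eigenvector has $k$-th component $p_{k-1}(\lambda) / \bigl(\sum_{i=0}^n p_i(\lambda)^2\bigr)^{1/2}$, and its last ($(n+1)$-st) component is $p_n(\lambda) / \bigl(\sum_{i=0}^n p_i(\lambda)^2\bigr)^{1/2}$. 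By the spectral theorem, the spectral measure of $(J_{n+1;F}, \delta_{n+1})$ is $\sum_\lambda |\langle \delta_{n+1}, \varphi^{(\lambda)}\rangle|^2 \, \delta_\lambda$, and $|\langle \delta_{n+1}, \varphi^{(\lambda)}\rangle|^2$ is exactly the square of that last component, namely $p_n(\lambda)^2 / \sum_{i=0}^n p_i(\lambda)^2$. This is precisely the weight appearing in \eqref{n8.3}, completing the identification.

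The only point requiring a little care — and the step I expect to be the mild obstacle — is verifying that the eigenvector components really are $(p_0(\lambda),\dots,p_n(\lambda))$ and, in particular, that this vector is nonzero (so the normalization makes sense) and that $\lambda$ is a simple eigenvalue. Simplicity follows from the fact that a Jacobi matrix with nonzero off-diagonal entries cannot have a two-dimensional eigenspace (the solution of the second-order recursion is determined by two consecutive values, and $p_{-1}=0,\ p_0=1$ already fixes it up to scale), and nonvanishing of $(p_0(\lambda),\dots,p_n(\lambda))$ is immediate since $p_0 \equiv 1$. Everything else is a direct substitution into the three-term recursion to confirm that $(p_0(\lambda),\dots,p_n(\lambda))^{T}$ satisfies $J_{n+1;F} v = \lambda v$, where the bottom row of the truncated eigenvalue equation reads $a_n p_{n-1}(\lambda) + b_{n+1} p_n(\lambda) = \lambda p_n(\lambda)$, which is exactly \eqref{1.1} at index $n$ together with $p_{n+1}(\lambda)=0$.
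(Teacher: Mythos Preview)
Your argument is correct and follows essentially the same route as the paper: identify the eigenvalues of $J_{n+1;F}$ as the zeros of $p_{n+1}$ via the characteristic polynomial, observe that the eigenvector at $\lambda=x_j^{(n+1)}$ has components $p_{k-1}(\lambda)$, and read off the $\delta_{n+1}$-weight as the square of the normalized last component. Your additional remarks on simplicity and nonvanishing are standard and merely make explicit what the paper leaves implicit.
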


\begin{proof} It is well known  (see \cite{Rice}) that
\begin{equation} \lb{n8.5}
\det(x-J_{n+1;F}) =P_{n+1}(x)
\end{equation}
so the eigenvalues are the solutions of \eqref{n8.4}. The unnormalized eigenvector for $x_j^{(n+1)}$ is
$v_k$, given by
\begin{equation} \lb{n8.6}
v_k = p_{k-1} (x_j^{(n+1)})
\end{equation}
so \eqref{n8.3} has the squares of the normalized components for $\delta_{n+1}$.
\end{proof}

\begin{lemma} \lb{Ln8.4} Let $J_F^{(n)}$ be a family of $m_n\times m_n$ finite Jacobi matrices with coefficients
associated to $\{a_j^{(n)}\}_{j=1}^{m_n-1} \cup\{b_j^{(n)}\}_{j=1}^{m_n}$. Suppose
\begin{SL}
\item[{\rm{(i)}}] $m_n\to\infty$
\item[{\rm{(ii)}}]
\begin{equation} \lb{n8.7}
\sup_{j,n}\, \abs{a_j^{(n)}} + \abs{b_j^{(n)}} <\infty
\end{equation}
\item[{\rm{(iii)}}] For each fixed $j$,
\begin{equation} \lb{n8.8}
a_j^{(n)} \to a_j^{(\infty)} \qquad b_j^{(n)} \to b_j^{(\infty)}
\end{equation}
\end{SL}
Let $J^{(\infty)}$ be the infinite Jacobi matrix with parameters $\{a_j^{(\infty)},b_j^{(\infty)}\}_{j=1}^\infty$.
Let $d\rho^{(n)}$ be the spectral measure for $J_F^{(n)}$ and $\delta_1$, and $d\rho^{(\infty)}$ for $J^{(\infty)}$
and $\delta_1$. Then
\begin{equation} \lb{n8.9}
\wlim_{n\to\infty}\, d\rho^{(n)} =d\rho^{(\infty)}
\end{equation}
\end{lemma}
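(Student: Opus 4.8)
The plan is to prove weak convergence by showing convergence of moments, i.e., that $\int x^k\, d\rho^{(n)}(x) \to \int x^k\, d\rho^{(\infty)}(x)$ for every $k \geq 0$. Since the supports are uniformly bounded by hypothesis (ii) — say all contained in $[-M,M]$ where $M = 2\sup_{j,n}(\abs{a_j^{(n)}} + \abs{b_j^{(n)}})$ plus a bit, using Gershgorin or $\norm{J}$-type bounds — moment convergence is equivalent to weak convergence for measures supported in a fixed compact (the Weierstrass approximation argument: polynomials are dense in $C([-M,M])$, and the uniform bound controls the tails). Note $d\rho^{(\infty)}$ is also supported in $[-M,M]$ since $J^{(\infty)}$ inherits the bound \eqref{n8.7} by (iii).

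The key identity is that moments are matrix elements: $\int x^k\, d\rho^{(n)}(x) = \jap{\delta_1, (J_F^{(n)})^k \delta_1}$, and likewise $\int x^k\, d\rho^{(\infty)}(x) = \jap{\delta_1, (J^{(\infty)})^k \delta_1}$. So the lemma reduces to showing $\jap{\delta_1, (J_F^{(n)})^k \delta_1} \to \jap{\delta_1, (J^{(\infty)})^k \delta_1}$ for each fixed $k$. This is a finite-range/locality argument: $(J_F^{(n)})^k$ as applied to $\delta_1$ only involves the entries $a_j^{(n)}, b_j^{(n)}$ with $j \leq k$ (the matrix is tridiagonal, so $(J^k)_{11}$ depends only on the top-left $(k{+}1)\times(k{+}1)$ corner), and for $n$ large enough that $m_n > k$ — which holds eventually by (i) — this corner of $J_F^{(n)}$ agrees with the corresponding corner of the infinite matrix built from $\{a_j^{(n)}, b_j^{(n)}\}$. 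Then by (iii), each of these finitely many entries converges to the corresponding entry of $J^{(\infty)}$, and since $\jap{\delta_1, (\cdot)^k \delta_1}$ is a polynomial (hence continuous) function of those finitely many entries, the matrix element converges. One can make this quantitative by writing $(J^k)_{11} = \sum a_{i_1}\cdots b_{i_r}\cdots$ over the finitely many tridiagonal lattice paths of length $k$ from $1$ to $1$, each term a monomial in the entries indexed by $j\le k$.

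I do not expect a serious obstacle here; this is a routine ``moment problem on a compact set'' argument. The one point requiring a little care is the truncation effect: one must observe that replacing the infinite matrix $J^{(\infty)}$ (or the matrix of $n$-th parameters) by its finite $m_n \times m_n$ truncation $J_F^{(n)}$ does not change the $(1,1)$ entry of the $k$-th power once $m_n > k$, because no tridiagonal path of length $k$ starting and ending at site $1$ can reach site $m_n+1$ or beyond. Equivalently, one could first show $d\rho^{(n)}$ is tight and extract weakly convergent subsequences, then identify the limit via moments — but the direct moment computation above is cleaner and avoids subsequence bookkeeping.
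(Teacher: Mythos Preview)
Your proposal is correct and follows essentially the same moment-convergence strategy as the paper: both identify $\int x^k\,d\rho^{(n)}$ with $\langle\delta_1,(J_F^{(n)})^k\delta_1\rangle$ and use the uniform bound (ii) to pass from moment convergence to weak convergence. The paper's implementation of the key step differs only cosmetically---it extends $J_F^{(n)}$ by zeros to $\ell^2(\bbN)$ and invokes strong operator convergence of uniformly bounded operators (hence of their powers) in place of your explicit locality/path-counting argument.
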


\begin{remark} This generalizes Theorem~3 in \cite[p.~17]{Nev79}.
\end{remark}

\begin{proof} Extend $J_F^{(n)}$ to an infinite matrix by setting all other matrix elements to $0$. Then
\eqref{n8.7} implies that
\begin{equation} \lb{n8.10}
\sup_n\, \norm{J_F^{(n)}} <\infty
\end{equation}
and \eqref{n8.8} implies that for any finite support vector, $v$,
\begin{equation} \lb{n8.11}
\norm{(J_F^{(n)} -J^{(\infty)})v} \to 0
\end{equation}
It follows that
\begin{equation} \lb{n8.12}
\slim\, J_F^{(n)} = J^{(\infty)}
\end{equation}
So, by \eqref{n8.10},
\begin{equation} \lb{n8.13}
\slim\, (J_F^{(n)})^k = (J^{(\infty)})^k
\end{equation}
for all $k$. Thus,
\begin{equation} \lb{n8.14}
\lim \jap{\delta_1, (J_F^{(n)})^k \delta_1} = \jap{\delta_1, (J^{(\infty)})^k \delta_1}
\end{equation}
so
\begin{equation} \lb{n8.15}
\lim \int x^k\, d\rho^{(n)} = \int x^k \, d\rho^{(\infty)}
\end{equation}
from which \eqref{n8.9} follows.
\end{proof}

\begin{proof}[Proof of Theorem~\ref{Tn8.1}] Let $\wti K$ be a compact subset of $\Xi$.
If \eqref{1.11} does not
hold uniformly, we can find $n(j)\to\infty$, $x_j\in\wti K$, and $\veps >0$ so that for all $j$,
\begin{equation} \lb{n8.16}
\f{p_{n(j)} (x_j)^2}{\sum_{k=0}^{n(j)} p_k (x_j)^2} \geq \veps
\end{equation}
By passing to a subsequence, we can suppose
\begin{equation} \lb{n8.17}
x_j \to x_\infty \in \wti K
\end{equation}

Notice that \eqref{n8.16} implies
\begin{equation} \lb{n8.18}
p_{n(j)-1} (x_j)^2 \leq \sum_{k=0}^{n(j)} p_k (x_j)^2 \leq \veps^{-1} p_{n(j)} (x_j)^2
\end{equation}
Define $\ti b_{n(j)+1}$ by
\begin{equation} \lb{n8.19}
a_{n(j)} p_{n(j)-1} (x_j) + (\ti b_{n(j)+1}-x_j) p_{n(j)} (x_j) =0
\end{equation}
Thus, for the OPs with Jacobi parameters $(a_1, \dots, a_{n(j)})$, $(b_1, \dots, b_{n(j)}, \ti b_{n(j)+1})$,
we have
\begin{equation} \lb{n8.20}
\ti p_{n(j)+1} (x_j)=0
\end{equation}
Moreover, by \eqref{n8.19} and \eqref{n8.18},
\begin{equation} \lb{n8.21}
\limsup\, \abs{\ti b_{n(j)+1}} \leq \veps^{-1/2} \sup_k\, \abs{a_k} + \sup_k\, \abs{x_k}
\end{equation}
is finite.

Let $J_F^{(j)}$ be the Jacobi matrix which is $(n(j)+1)\times (n(j)+1)$ with $a_{n(j)}, a_{n(j)-1}, \dots, a_1$
off diagonal and $\ti b_{n(j)+1} - x_j +x_\infty, b_{n(j)} -x_j +x_\infty, \dots, b_1 -x_j +x_\infty$ on diagonal.
By \eqref{n8.16} and Lemma~\ref{Ln8.3} (turning $J_{n(j)+1;F}$ on its head!), the spectral measure for $J_F^{(j)},
\delta_1$ has a pure point at $x_\infty$ of mass at least $\veps$.

By passing to a further subsequence, we can suppose for all $q$ that
$a_{n(j)+q}\to a_q^{(r)}$, $b_{n(j)+q} \to b_q^{(r)}$ for some right
limit, $J^{(r)}$, and, by \eqref{n8.21} and a further subsequence,
that $\ti b_{n(j)+1}\to \ti b_1^{(r)}$.

The coefficients of $J_F^{(j)}$ clearly obey \eqref{n8.7} and there
is a $J^{(\infty)}$ so \eqref{n8.8} holds. This is given by the
reversed left side of $J^{(r)}$ (from $-\infty$ to $1$), with
$b_1^{(r)}$ replaced by $\ti b_1^{(r)}$.

For any positive function $f$,
\begin{equation} \lb{n8.22}
\int f(y)\, d\rho^{(n)}(y) \geq \veps f(x_\infty)
\end{equation}
So, by \eqref{n8.9},
\begin{equation} \lb{n8.23}
\int f(y)\, d\rho^{(\infty)} (y) \geq \veps f(x_\infty)
\end{equation}
Thus,
\begin{equation} \lb{n8.24}
\rho^{(\infty)} (\{x_\infty\}) \geq \veps
\end{equation}
and $x_\infty$ is an eigenvalue of $J^{(\infty)}$. Thus, $J^{(r)}$ has an eigensolution at $x_\infty$ which is
$\ell^2$ at $-\infty$, violating \eqref{n8.1}. This proves statement (i) of Theorem~\ref{Tn8.2}.

For (ii), by passing to a subsequence, \eqref{n8.17} holds for some $x_\infty$. If $x_\infty$ is an isolated
eigenvalue, $x_j$ must be equal to $x_\infty$ for large $j$, so $\limsup\abs{p_j(x_\infty)}>0$, violating
the condition that $x_\infty$ is an isolated eigenvalue. If $x_\infty \in\sigma_\ess(J)$, the argument in
the first part produces a contradiction. Thus, (ii) is proven.
\end{proof}

\section{Some Comments} \lb{s8}

We end this paper with some final results and comments. The following must be well known in the ergodic
theory community:

\begin{proposition} \lb{P8.1} Let $a_n$ be a sequence of reals and
\begin{equation} \lb{8.1}
C_n = \f{1}{n}\, \sum_{j=1}^n a_j
\end{equation}
If
\begin{equation} \lb{8.2x}
\lim_{n\to\infty}\, C_n = C_\infty \neq 0
\end{equation}
exists, then
\begin{equation} \lb{8.3x}
\f{a_n}{n C_n} \to 0
\end{equation}
\end{proposition}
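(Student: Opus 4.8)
The plan is to recover the terms $a_n$ from the averages $C_n$ by a one-line telescoping identity and then exploit the fact that a convergent sequence has vanishing consecutive differences. From \eqref{8.1} we have $nC_n = \sum_{j=1}^n a_j$ and $(n-1)C_{n-1} = \sum_{j=1}^{n-1} a_j$, so subtracting gives $a_n = nC_n - (n-1)C_{n-1}$, hence
\[
\f{a_n}{n} = C_n - C_{n-1} + \f{C_{n-1}}{n}.
\]
First I would observe that \eqref{8.2x} forces $C_n - C_{n-1} \to 0$, and since $(C_n)$, being convergent, is bounded, also $C_{n-1}/n \to 0$; therefore $a_n/n \to 0$.

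The second step is simply to divide by $C_n$. Since $C_\infty \neq 0$, there is an $N$ with $\abs{C_n} \geq \tfrac12 \abs{C_\infty} > 0$ for all $n \geq N$, so for such $n$ the quantity in \eqref{8.3x} is well defined and equals $(a_n/n)/C_n$, which tends to $0/C_\infty = 0$. This proves \eqref{8.3x}.

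I do not expect any real obstacle; the argument is entirely elementary. The only place the full strength of the hypothesis is used --- namely $C_\infty \neq 0$ rather than mere convergence of $C_n$ --- is in controlling the denominator, and this is genuinely needed: if $C_\infty = 0$ the conclusion can fail, as one sees with $a_n = (-1)^n$, for which $nC_n \in \{0,-1\}$ and $a_n/(nC_n)$ does not tend to $0$ (indeed it is not even everywhere defined).
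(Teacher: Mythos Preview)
Your proof is correct and follows essentially the same route as the paper: the paper also writes $a_n = nC_n - (n-1)C_{n-1}$, divides by $n$ to get $a_n/n = C_n - C_{n-1} + n^{-1}C_{n-1}$, and then divides by $C_n$ to conclude. Your version is slightly more explicit about why the denominator is eventually nonzero, and the counterexample you add for $C_\infty = 0$ is a nice touch, but the argument is the same.
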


\begin{proof} Since
\begin{align}
a_n &= n C_n -(n-1)C_{n-1} \lb{8.2}\\
n^{-1} a_n &= C_n - C_{n-1} + n^{-1} C_{n-1} \lb{8.3}
\end{align}
so
\begin{equation} \lb{8.4}
\f{a_n}{n C_n} = \f{C_n - C_{n-1} + n^{-1} C_{n-1}}{C_n}
\end{equation}
goes to zero if \eqref{8.2x} holds.
\end{proof}

It is an idea associated especially with Freud and Nevai (see \cite{NevFr}) that on the a.c.\ spectrum,
\begin{equation} \lb{8.5}
\f{1}{n}\, \sum_{j=0}^{n-1} p_j(x_0)^2 \to \f{\rho_\infty (x_0)}{w(x_0)}
\end{equation}
where $\rho_\infty$ is the density of zeros and $w$ is the a.c.\ part of the underlying measure, with
concrete results both classical (\cite{MNT91,Tot}) and recent (\cite{ALS,Lub,Lub-jdam,2exts,Tot-prep}).

Via Proposition~\ref{P8.1}, this gives several results on the Nevai condition. Totik's result \cite{Tot}
implies:

\begin{theorem}\lb{T8.2} Let $d\rho$ have the form
\begin{equation} \lb{8.6}
d\rho(x) = w(x)\, dx + d\rho_\s (x)
\end{equation}
with $d\rho_\s$ Lebesgue singular. Let $\fre$ be the essential support of $d\rho$ and suppose $\rho$
is regular for $\fre$. Suppose $I$ is an open interval in $\fre$ on which a Szeg\H{o} condition holds:
\begin{equation} \lb{8.7}
\int_I \log(w(x))\, dx >-\infty
\end{equation}
Then for Lebesgue a.e.\ $x_0\in I$, the Nevai condition holds.
\end{theorem}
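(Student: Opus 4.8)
The plan is to combine Totik's equidistribution result \eqref{8.5} with Proposition~\ref{P8.1} in order to verify the subexponential-growth condition \eqref{1.11} for Lebesgue a.e.\ $x_0\in I$, and then to invoke Theorem~\ref{T1.2}.

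First I would dispose of the side conditions needed later. Since $\supp(d\rho)$ is compact, $A_+=\sup_n a_n<\infty$. The Szeg\H{o} condition \eqref{8.7} forces $w(x)>0$ for Lebesgue a.e.\ $x\in I$, so $d\rho$ has a nontrivial absolutely continuous part; hence, by the result of Dombrowski \cite{Dom78} recalled in Section~\ref{s1}, $A_-=\inf_n a_n>0$. Thus \eqref{1.13a} holds. Next, since $\rho$ is regular for $\fre$, the density of zeros $\rho_\infty$ coincides on $I$ with the density of the potential-theoretic equilibrium measure of $\fre$; because $I$ is an open interval contained in $\fre$, a standard property of equilibrium measures (see \cite{StT,EqMC}) gives that this density is strictly positive for a.e.\ $x_0\in I$, and it is finite a.e.\ since it is an $L^1$ density. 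Together with $0<w(x_0)<\infty$ for a.e.\ $x_0\in I$, this shows that $\rho_\infty(x_0)/w(x_0)$, the limit appearing on the right side of \eqref{8.5}, is finite and nonzero for a.e.\ $x_0\in I$.

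Now the main step. Totik's theorem \cite{Tot}, under exactly the present hypotheses (regularity for $\fre$ together with the local Szeg\H{o} condition \eqref{8.7}), yields \eqref{8.5}: for a.e.\ $x_0\in I$,
\[
C_n(x_0):=\f{1}{n}\sum_{j=0}^{n-1}p_j(x_0)^2\ \to\ \f{\rho_\infty(x_0)}{w(x_0)}=:C_\infty(x_0) .
\]
Fix $x_0$ in the full-measure subset of $I$ on which both \eqref{8.5} holds and $C_\infty(x_0)$ is finite and nonzero, and apply Proposition~\ref{P8.1} to the sequence whose $j$th term is $p_{j-1}(x_0)^2$, $j=1,2,\dots$; its Ces\`aro mean \eqref{8.1} is precisely $C_n(x_0)$, which converges to $C_\infty(x_0)\neq0$, so \eqref{8.3x} gives
\[
\f{p_{n-1}(x_0)^2}{\sum_{j=0}^{n-1}p_j(x_0)^2}\ \to\ 0 ,
\]
which, after a trivial relabeling of the index, is \eqref{1.11}. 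Since \eqref{1.13a} holds, Theorem~\ref{T1.2} (equivalently Theorem~\ref{T2.2}) then gives that the Nevai condition holds at $x_0$. As $x_0$ ranges over a set of full Lebesgue measure in $I$, this is the assertion of the theorem.

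The only substantive input is quoting Totik's theorem \cite{Tot} in the form \eqref{8.5}; everything downstream of it is the short bookkeeping above, namely checking \eqref{1.13a}, the a.e.\ positivity and finiteness of $\rho_\infty/w$ on $I$, and the relabeling turning \eqref{8.3x} into \eqref{1.11}. In particular no new estimate is needed: Proposition~\ref{P8.1} carries out the only analytic step, the passage from a convergent Ces\`aro mean of the $p_j(x_0)^2$ to the vanishing of the individual ratio.
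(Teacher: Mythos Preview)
Your proof is correct and follows essentially the same route as the paper: invoke Totik's result \cite{Tot} to obtain \eqref{8.5}, then apply Proposition~\ref{P8.1} to deduce \eqref{1.11}. The paper leaves the details implicit, whereas you spell out the verification that the limit $\rho_\infty(x_0)/w(x_0)$ is a.e.\ finite and nonzero on $I$ and that \eqref{1.13a} holds; these are the right checks, though for the direction \eqref{1.11} $\Rightarrow$ Nevai condition only $A_+<\infty$ is actually needed, which is automatic from compact support.
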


More recent work on uniform convergence \cite{Lub,2exts} implies

\begin{theorem}\lb{T8.3} Under the hypotheses of Theorem~\ref{T8.2}, if \eqref{8.7} is replaced by
$w$ continuous on $I$ and $\inf_I w(x) >0$, then the Nevai condition holds on all of $I$ uniformly
on compact subsets of $I$.
\end{theorem}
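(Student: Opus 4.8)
The plan is to deduce Theorem~\ref{T8.3} from Proposition~\ref{P8.1} exactly as Theorem~\ref{T8.2} is deduced from it, the only difference being which strengthening of \eqref{8.5} one invokes. Recall from Theorem~\ref{T1.2} and Theorem~\ref{T2.2} that, under \eqref{1.13a}, the Nevai condition at $x_0$ is equivalent to \eqref{1.11}, and by Proposition~\ref{P2.1} (items (i)--(iv)) also equivalent to $(p_n(x_0)^2+p_{n+1}(x_0)^2)/K_n(x_0,x_0)\to 0$. So it suffices to get, uniformly on a compact $K\subset I$, a two-sided control: an upper bound on $p_n(x_0)^2+p_{n+1}(x_0)^2$ and a lower bound on $K_n(x_0,x_0)$ that beats it.

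First I would invoke the cited uniform convergence results \cite{Lub,2exts}: under the hypotheses (regularity for $\fre$, $w$ continuous and bounded below on $I$), one has $\frac1n K_n(x_0,x_0)=\frac1n\sum_{j=0}^{n-1}p_j(x_0)^2\to \rho_\infty(x_0)/w(x_0)$ uniformly on compact subsets of $I$, and this limit is continuous and strictly positive there (since $w$ is continuous and bounded below and $\rho_\infty$, the equilibrium-measure density, is bounded below on compacts of $\fre^\intt$). In particular $a_n:=p_{n-1}(x_0)^2$ has Cesàro averages $C_n=\frac1n K_n(x_0,x_0)$ converging to a nonzero limit, uniformly in $x_0\in K$. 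Then Proposition~\ref{P8.1} gives $\frac{p_{n-1}(x_0)^2}{K_n(x_0,x_0)}=\frac{a_n}{nC_n}\to 0$; tracking the proof of Proposition~\ref{P8.1}, the rate is governed by $\bigl|C_n-C_{n-1}\bigr|+n^{-1}C_{n-1}$ divided by $C_n$, and since the $C_n$ converge uniformly with a uniform positive lower bound, this goes to $0$ uniformly on $K$. Combining with $p_{n+1}(x_0)^2/K_n(x_0,x_0)\to 0$ (same argument applied to the shifted index, using Proposition~\ref{P2.1}(i)$\Leftrightarrow$(iii) or \eqref{2.14x}/Proposition~\ref{P2.3}) yields \eqref{1.11} uniformly on $K$, hence the Nevai condition uniformly on $K$.

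The one point requiring a little care---and the likely main obstacle---is the passage from ``$a_n/(nC_n)\to0$ pointwise'' to ``uniformly on $K$'': Proposition~\ref{P8.1} as stated is pointwise, so I would either restate it with a uniformity clause (its proof is unchanged: \eqref{8.4} shows $a_n/(nC_n)$ is a fixed continuous function of $C_n,C_{n-1}$ that vanishes as both tend to the common nonzero limit, so uniform convergence of $C_n$ plus a uniform lower bound on $|C_\infty|$ forces uniform convergence of $a_n/(nC_n)$), or simply note that the uniform convergence $\frac1n K_n(x_0,x_0)\to \rho_\infty/w$ from \cite{Lub,2exts} directly bounds $\frac{p_{n+1}(x_0)^2+p_n(x_0)^2}{K_n(x_0,x_0)}$: the numerator is $K_{n+1}-K_{n-1}=(n+1)C_{n+1}-(n-1)C_{n-1}$, which is $O(1)$ uniformly on $K$ because $nC_n$ grows linearly with a coefficient that converges uniformly, while the denominator grows like $n$ times a uniformly positive quantity. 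Either route gives the ratio $\le C/n\to0$ uniformly on $K$, and since $K\subset I$ was an arbitrary compact subset, the Nevai condition holds on all of $I$ uniformly on compact subsets. I would close by remarking that the singular part $d\rho_\s$ plays no role beyond being allowed by the cited CD-kernel asymptotics, exactly as in Theorem~\ref{T8.2}.
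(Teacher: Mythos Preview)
Your main argument is correct and is exactly the paper's approach: the paper simply says ``More recent work on uniform convergence \cite{Lub,2exts} implies'' Theorem~\ref{T8.3}, meaning one feeds the uniform convergence of $\tfrac1n K_n(x_0,x_0)$ on compacts of $I$ into Proposition~\ref{P8.1}, and your uniformity upgrade of Proposition~\ref{P8.1} (via \eqref{8.4}: $|C_n-C_{n-1}|\le|C_n-C_\infty|+|C_{n-1}-C_\infty|\to 0$ uniformly, $n^{-1}C_{n-1}\to0$ uniformly, and $C_n$ uniformly bounded below for large $n$) is the right way to make this precise.

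One caution about your alternative route: the claim that $K_{n+1}-K_{n-1}=(n+1)C_{n+1}-(n-1)C_{n-1}$ is $O(1)$ uniformly does \emph{not} follow from uniform convergence of $C_n$ alone. Writing it as $2C_\infty+(n+1)(C_{n+1}-C_\infty)-(n-1)(C_{n-1}-C_\infty)$ shows you would need $n(C_n-C_\infty)$ bounded, a rate of convergence not supplied by the cited results. In fact $p_n(x_0)^2$ need not be bounded under these hypotheses (cf.\ the Rakhmanov counterexample to Steklov's conjecture discussed later in Section~\ref{s8}). So drop the second route and the ``$\le C/n$'' claim; your first route already gives uniform convergence to $0$, which is all the theorem asserts.
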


Recent work on ergodic Jacobi matrices (\cite{ALS}; see that paper for the definition of ergodic Jacobi
matrices) implies

\begin{theorem} \lb{T8.4} Let $J_\omega$ be a family of ergodic Jacobi matrices obeying \eqref{1.13a}.
Suppose $\Sigma_\ac$, the essential support of the a.c.\ spectrum, is nonempty.
Then for a.e.\ $\omega, x_0\in\Sigma_\ac$, the
Nevai condition holds.
\end{theorem}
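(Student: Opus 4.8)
The plan is to run the same reduction used for Theorems~\ref{T8.2} and \ref{T8.3}: pass from the Nevai condition to \eqref{1.11} via Theorem~\ref{T1.2}, deduce \eqref{1.11} from the Ces\`aro argument of Proposition~\ref{P8.1} once one knows that $\f1n K_n(x_0,x_0)$ has a nonzero limit, and then invoke the ergodic Christoffel--Darboux asymptotics of Avila--Last--Simon \cite{ALS} to supply that limit. Since \eqref{1.13a} is assumed, Theorem~\ref{T1.2} (equivalently Theorem~\ref{T2.2}) says that for each fixed $x_0$ the Nevai condition for $d\rho_\omega$ at $x_0$ is equivalent to \eqref{1.11}, i.e.\ to $p_n(x_0)^2/K_n(x_0,x_0)\to 0$, where $K_n(x_0,x_0)=\sum_{j=0}^n p_j(x_0)^2$.

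Next I would apply Proposition~\ref{P8.1} with $a_n=p_{n-1}(x_0)^2$, so that $nC_n=\sum_{j=1}^n p_{j-1}(x_0)^2=K_{n-1}(x_0,x_0)$ and hence $C_n=\f1n K_{n-1}(x_0,x_0)=\f{n-1}{n}\cdot\f1{n-1}K_{n-1}(x_0,x_0)$. If $\f1m K_m(x_0,x_0)$ converges to a finite, nonzero constant $L(x_0)$, then $C_n\to L(x_0)\neq 0$, so Proposition~\ref{P8.1} yields $a_n/(nC_n)=p_{n-1}(x_0)^2/K_{n-1}(x_0,x_0)\to 0$, which, after renumbering, is exactly \eqref{1.11}. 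By Theorem~\ref{T1.2} this gives the Nevai condition at $x_0$. Thus everything is reduced to establishing a finite, strictly positive limit for $\f1n K_n(x_0,x_0)$.

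This last point is precisely the ergodic instance of \eqref{8.5}. Avila--Last--Simon \cite{ALS} prove that for a family of ergodic Jacobi matrices with $\Sigma_\ac\neq\emptyset$, for a.e.\ $\omega$ and Lebesgue-a.e.\ $x_0\in\Sigma_\ac$ the limit $L(x_0)=\lim_{n\to\infty}\f1n K_n(x_0,x_0)$ exists, is finite, and equals $\rho_\infty(x_0)/w_\omega(x_0)$, where $\rho_\infty$ is the (a.c.\ part of the) density of states and $w_\omega$ is the a.c.\ part of $d\rho_\omega$. For a.e.\ such $x_0$ one has $0<w_\omega(x_0)<\infty$ because $\Sigma_\ac$ is the essential support of the a.c.\ part, and $\rho_\infty(x_0)>0$ since the density of states is purely a.c.\ with a.e.-positive density on $\Sigma_\ac$; hence $L(x_0)\in(0,\infty)$ and the argument closes. (As an aside, transfer-matrix boundedness, valid for a.e.\ $x_0\in\Sigma_\ac$, already gives $p_n(x_0)^2$ bounded, so the finiteness of $L$ could also be seen directly, but we do not need this.)

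The analytic heart is entirely carried by the cited theorem of \cite{ALS}; the remainder is the soft Ces\`aro observation of Proposition~\ref{P8.1} together with the equivalence in Theorem~\ref{T1.2}. The only extra point requiring a line of care is the a.e.-positivity of $\rho_\infty$ on $\Sigma_\ac$, which is what guarantees $L(x_0)\neq 0$ and hence lets Proposition~\ref{P8.1} apply; this follows from the structure of the density of states of an ergodic operator restricted to its a.c.\ spectrum.
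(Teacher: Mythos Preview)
Your proposal is correct and is exactly the approach the paper takes: the paper states Theorem~\ref{T8.4} immediately after remarking that the asymptotic \eqref{8.5}, via Proposition~\ref{P8.1}, yields the Nevai condition, and then cites \cite{ALS} as providing \eqref{8.5} in the ergodic setting. You have simply written out in full the chain $\text{\cite{ALS}}\Rightarrow\text{\eqref{8.5}}\Rightarrow\text{Proposition~\ref{P8.1}}\Rightarrow\text{\eqref{1.11}}\Rightarrow\text{Theorem~\ref{T1.2}}\Rightarrow\text{Nevai condition}$ that the paper leaves implicit, including the check that the limit $L(x_0)$ is strictly positive and finite a.e.\ on $\Sigma_\ac$.
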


We also want to make a comment regarding the unbounded case (where
$\rho$ is not compactly supported). In this case, there exist
measures, $\rho$, of various types (including pure point!) with $
\lim_{n \rightarrow \infty}\f{a_{n+1}^2 [p_n(x_0)^2 +
p_{n+1}(x_0)^2]}{K_n(x_0,x_0)} \neq 0$ for $x$ in a set of positive
$\rho$ measure. Indeed, the power law behavior of the generalized
eigenfunctions in \cite{breuer} and those associated with the
absolutely continuous part of the measure in \cite{DN} imply the
limit in these cases is actually $\infty$ for certain values of the
relevant parameters. In the Introduction, it was noted that this
means that truncations of the generalized eigenfunction do not form a
sequence of approximate eigenvectors. Although \eqref{2.14}
is still true in this case, Theorem~\ref{T1.2} does not hold for
measures that are not compactly supported, so this does not
constitute a counterexample to Conjecture~\ref{Con1.4}. In any case,
it seems to be an interesting challenge to study the Nevai condition
in the unbounded case.

Next, we turn to some remarks on Conjecture~\ref{Con1.4}. It is
standard to break $d\rho$ into three parts: a.c., pure point, and
singular continuous. For compactly supported measures the Nevai
condition always holds at pure points, since if $x_0$ is a pure
point, $\sum_{j=0}^\infty p_j(x_0)^2 <\infty$, so $p_n(x_0)^2 \to
0$. For $d\rho_{\rm c}$($=$ the continuous part of $d\rho$) it is not
hard to see that $\lim_{n \rightarrow \infty}\int
\f{p_n(x)^2}{\sum_{j=0}^n p_j(x)^2}d\rho_{\rm c}(x)=0$ so the issue
is going from convergence of integrals to pointwise convergence.

If $x_0$ is not a pure point but $\sup_n \abs{p_n(x_0)}<\infty$, then since $\sum_{j=0}^\infty p_j(x_0)^2
=\infty$, the Nevai condition holds. Thus, our Conjecture~\ref{Con1.4} is related to a famous conjecture
of Steklov and what is sometimes called the Schr\"odinger conjecture \cite{MMG}. There are known
counterexamples to both conjectures (see \cite{Rakh} and \cite{Jito}), but the counterexample in
\cite{Rakh} has failure of boundedness at a single point and the counterexample in \cite{Jito} does not
seem to violate the Nevai condition, so Conjecture~\ref{Con1.4} could be true.

We note that the currently open version of the Schr\"odinger
conjecture, that for a.e.\ $x_0$ in the essential support of the a.c.\ spectrum one has
bounded eigenfunctions, would imply the Nevai condition a.e.\ on the
essential support of the a.c.\ spectrum.

Finally, in connection with the example of Section~\ref{s5}, we would like to point out
that we believe the following is true:
\begin{conjecture}\lb{Con9.5}
Let $q\in\bbN$, let $\{a_n, b_n\}_{n=1}^{\infty}$ be Jacobi parameters
obeying
\begin{equation} \lb{9.10}
\sum_{n=1}^{\infty} (a_{n+q}-a_n)^2 + (b_{n+q}-b_n)^2 < \infty
\end{equation}
and let $\calR$ be the set of corresponding right limits.
Then
\[
\Sigma_{\ac} = \bigcap_{\calR} \sigma(J^{(r)})
\]
where $\Sigma_{\ac}$
is the corresponding
essential support of the a.c.\ spectrum.
\end{conjecture}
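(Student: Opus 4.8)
The plan is to prove the two inclusions separately, one being ``soft'' and general, the other being the real content. For the inclusion $\Sigma_{\ac}\subseteq\bigcap_{\calR}\sigma(J^{(r)})$ no special structure is needed: by Remling's oracle theorem \cite{Rem} (used already, via the Last--Simon bound \cite{S263}, in the proof of Theorem~\ref{T5.2}), every right limit $J^{(r)}$ is reflectionless on $\Sigma_{\ac}(J)$, so $\Sigma_{\ac}(J)\subseteq\Sigma_{\ac}(J^{(r)})\subseteq\sigma(J^{(r)})$ up to Lebesgue-null sets, for each $J^{(r)}\in\calR$, and hence $\Sigma_{\ac}(J)\subseteq\bigcap_{\calR}\sigma(J^{(r)})$ mod null sets. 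I would also record at this point that, by \eqref{9.10}, $a_{n+q}-a_n\to 0$ and $b_{n+q}-b_n\to 0$, so every $J^{(r)}\in\calR$ is $q$-periodic; thus $\calR$ is a compact set of $q$-periodic whole-line Jacobi matrices, each with purely a.c.\ spectrum $\sigma(J^{(r)})=\{x:\abs{\Delta_r(x)}\le 2\}$, where $\Delta_r$ is the discriminant, i.e.\ the trace of the $q$-step monodromy matrix $M^{(r)}(x)$.

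For the reverse inclusion I would show that for Lebesgue a.e.\ $x_0$ in the interior of $\bigcap_{\calR}\sigma(J^{(r)})$ the transfer matrices $T_n(x_0)$ are bounded; then Carmona's formula (exactly as in the proof of Theorem~\ref{T5.2}) gives purely a.c.\ spectrum in a neighborhood of $x_0$, so $x_0\in\Sigma_{\ac}$. Fix a compact $K$ in that interior. By compactness of $\calR$ and continuity of $(r,x)\mapsto\Delta_r(x)$, one has $\sup_{r\in\calR}\abs{\Delta_r(x_0)}\le 2-\delta$ for some $\delta>0$ and all $x_0\in K$. Grouping the one-step transfer matrices into blocks of length $q$, write $T_{mq}(x_0)=\widehat M^{(m-1)}(x_0)\cdots\widehat M^{(0)}(x_0)$, where $\widehat M^{(m)}(x_0)$ is the $q$-step transfer matrix across block $m$. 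A subsequence argument shows $\abs{\tr\widehat M^{(m)}(x_0)}\le 2-\tfrac\delta2$ for all $x_0\in K$ and all $m\ge M_0(K)$ (otherwise a subsequential limit of the block data produces a right limit violating the bound on $K$). Each such elliptic $\widehat M^{(m)}(x_0)$ is conjugate to a rotation by a matrix bounded together with its inverse uniformly on $K$; and since the block data drift in $\ell^2$, $\sum_m\norm{\widehat M^{(m+1)}(x_0)-\widehat M^{(m)}(x_0)}^2\le C\sum_n\bigl(\abs{a_{n+q}-a_n}^2+\abs{b_{n+q}-b_n}^2\bigr)<\infty$ uniformly on $K$ by \eqref{9.10}.

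The heart of the matter is then to prove that such a product of $\mathrm{SL}(2,\bbR)$ matrices --- uniformly elliptic, drifting in $\ell^2$ --- stays bounded for a.e.\ $x_0\in K$; boundedness along the subsequence $n=mq$ then gives it for all $n$. This is the situation of Kiselev--Last--Simon \cite{KLS}, generalizing the EFGP/Pr\"ufer analysis of Section~\ref{s5}: passing to Pr\"ufer-type variables $R_n,\theta_n$ adapted to the diagonalizing frame of the local monodromy matrix, boundedness of $T_n$ reduces to $\sup_N\abs{\sum_{n\le N}X_n}<\infty$, with $X_n$ linear in $a_{n+q}-a_n$ and $b_{n+q}-b_n$. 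The $\theta$-independent part of $\sum X_n$ telescopes within each residue class mod $q$ and stays bounded because $\{a_n\},\{b_n\}$ are bounded; the $\theta$-dependent part is an oscillatory sum with $\ell^2$ coefficients whose phase increments are bounded away from $0$ and $\pi$ on $K$ (since $\abs{\tr\widehat M^{(m)}}$ is bounded away from $\pm2$), hence controlled for a.e.\ $x_0\in K$ by a maximal/square-function estimate (Lemma~3.5 of \cite{KLS} together with the cancellation behind Lemma~\ref{L5.6}). A countable exhaustion of the interior by such $K$'s then yields the a.e.\ statement, and combining with the first inclusion and with the fact (to be proven separately) that $\bigcap_{\calR}\sigma(J^{(r)})$ equals its interior up to a null set gives the claimed equality.

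I expect the obstacle to be twofold. First, the oscillation estimate is genuinely harder than in \cite{KLS} or Section~\ref{s5} because the ``background'' $q$-periodic operator itself drifts: the Floquet frame and the phase increments $\theta_{n+q}-\theta_n$ now vary with $n$, and one must show this drift is slow enough (again $\ell^2$, so summable in blocks) that the cancellation survives --- in particular near those $x_0\in K$ where some $\widehat M^{(m)}$ is close to a band edge of a \emph{nearby} right limit even though it is bounded away from the band edges of all actual right limits. Second, one must show that $\bigcap_{\calR}\sigma(J^{(r)})$ differs from its interior by a Lebesgue-null set, i.e.\ that the set of $x_0$ which are band edges of some $J^{(r)}\in\calR$ is null; this should follow from a Fubini-type argument using compactness of $\calR$ and real-analyticity of $r\mapsto\Delta_r(x)$, but needs care because $\calR$ may be an uncountable continuum.
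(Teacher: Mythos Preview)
The statement you are addressing is stated in the paper as a \emph{conjecture}, not a theorem; the paper provides no proof. Immediately after stating it, the authors note that the inclusion $\Sigma_{\ac}\subset\bigcap_{\calR}\sigma(J^{(r)})$ follows from Last--Simon \cite{S263}, that the reverse inclusion is the open content, that even the single--right--limit special case is an open conjecture of Kaluzhny--Last \cite{KaLa}, and that the only proven instance is Denisov's \cite{Denppt} very special case $a_n\equiv 1$, $b_n\to 0$, $q=1$. So there is no ``paper's own proof'' to compare against.

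That said, your sketch of the hard direction has a genuine gap, not merely the technical obstacles you flag at the end. The Pr\"ufer/KLS machinery you invoke is exactly the method of Section~\ref{s5}, and the paper is explicit that Section~\ref{s5} succeeds because of the special structure of Example~\ref{E5.1}, not because of \eqref{9.10} alone. Two specific points:
\begin{itemize}
\item Your assertion that ``the $\theta$-independent part of $\sum X_n$ telescopes within each residue class mod $q$ and stays bounded because $\{a_n\},\{b_n\}$ are bounded'' is not correct in general. In the notation of Section~\ref{s5}, $X_n^\sharp$ carries the $n$-dependent factor $(2a_n^2\sin^2 k_n)^{-1}$; under mere $\ell^2$ variation this factor itself drifts (since $k_n$ depends on $a_n$), and the sum does not telescope. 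In Example~\ref{E5.1} the boundedness of $\sum X_n^\sharp$ was obtained via the engineered symmetry between the $\wti B_j$ and $\wti D_j$ blocks (see \eqref{5.31}--\eqref{5.33}), a cancellation unavailable for generic Jacobi parameters satisfying \eqref{9.10}.
\item For the oscillatory part, the cancellation estimates behind Lemma~\ref{L5.6} and \cite[Lemma~3.5]{KLS} are not known to control $\ell^2$-coefficient oscillatory sums at the endpoint $p=2$ without additional structure. Indeed, even the simplest instance of the conjecture (Denisov's case) was \emph{not} proved by Pr\"ufer/EFGP methods; the paper itself remarks, just before Section~\ref{s5}'s EFGP analysis, that ``it would be interesting to see if one can extend his ideas to this context (see Conjecture~\ref{Con9.5}\dots)'', signalling that the authors regard Denisov's techniques---not the KLS approach you propose---as the more promising route.
\end{itemize}
In short, your outline correctly isolates the easy inclusion and the overall architecture, but the method you propose for the hard inclusion is precisely the one whose insufficiency for general $\ell^2$ variation is the reason the statement is recorded as a conjecture rather than a theorem.
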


We note that \eqref{9.10} implies $\calR$ is made of $q$-periodic
Jacobi matrices. The inclusion
$\Sigma_{\ac} \subset \bigcap_{\calR} \sigma(J^{(r)})$
follows from a general result of \cite{S263},
so the point here is the inclusion in the other
direction. Conjecture~\ref{Con9.5} generalizes a conjecture of
Kaluzhny--Last \cite{KaLa}, who make this conjecture for
the special case where $\calR$ consists of a single element.
Denisov's result \cite{Denppt} establishes it for the special
where $a_n\equiv 1$ and where the single element of $\calR$
is the free Jacobi matrix (namely, $b_n \to 0$), proving
an even earlier variant of this conjecture by Last \cite{Las}. 
Theorem~\ref{T5.2} shows that Conjecture~\ref{Con9.5}
holds for the Jacobi matrix of Example~\ref{E5.1}
(a special case where \eqref{9.10} holds for $q=1$), thus providing
some level of confirmation for it.

\bigskip

\end{document}